\newtheorem{thm}{Theorem}[section]
\newtheorem{cor}[thm]{Corollary}
\newtheorem{lem}[thm]{Lemma}
\newtheorem{prop}[thm]{Proposition}
\theoremstyle{remark}
\newtheorem{example}[thm]{\bf Example}
\newtheorem{remark}[thm]{\bf Remark}
\newtheorem{propdef}[thm]{\bf Proposition-Definition}
\def\N{\mathbb{N}}
\def\Z{\mathbb{Z}}
\def\ZZ{\Z \oplus \Z}
\def\R{\mathbb{R}}
\def\C{\mathbb{C}}
\def\a{\alpha}
\def\b{\beta}
\def\e{\varepsilon}
\def\f{\phi}
\def\vf{\varphi}
\def\m{\mu}
\def\w{\omega}
\def\G{\Gamma}
\def\ol{\overline}
\def\O{\Omega}
\def\proj{\pi}
\newcommand{\up}[2]{\null^{#2}\hspace{-.1em}#1}
\newcommand{\ZnZ}[1]{\mathbb{Z}/#1\mathbb{Z}}
\newcounter{rmq}[section]
\title{GROUP EXTENSIONS WITH INFINITE CONJUGACY CLASSES}
\begin{document}
\maketitle
\begin{center}
{\sc Jean-Philippe PR\' EAUX}\footnote{Laboratoire d'Analyse, Topologie et Probabilit\'es, UMR CNRS 7353,
 39 rue F.Joliot-Curie, F-13453 Marseille
cedex 13, France\\
\indent {\it E-mail :} \ preaux@cmi.univ-mrs.fr\\
{\it Mathematical subject classification : 20E45 (Primary), 20E22 (Secondary)}}
\end{center}

\markboth{Jean-Philippe Pr\'eaux}{Group Extensions with Infinite Conjugacy Classes}


\begin{abstract}
We characterize the group property of being with infinite conjugacy
classes (or {\it icc}, {\it i.e.} infinite and of which all
conjugacy classes except $\{1\}$ are  infinite)  for groups which are extensions of groups. 
We prove a general result for extensions of groups, then deduce characterizations in semi-direct products, wreath products, finite extensions, among others examples we also deduce a characterization for amalgamated products and HNN extensions.
 The icc property is correlated to the Theory of von Neumann algebras since a necessary and sufficient condition for the von Neumann algebra of a discrete group $\Gamma$ to be a factor of type $II_1$, is that $\Gamma$ be icc. 
Our approach applies in full generality to the study of icc property since any group that does not split as an extension is simple,
and in such case icc property becomes equivalent to being infinite.
\end{abstract}

\section*{\bf Introduction}

A group $\Gamma$ is said to be with {\sl infinite conjugacy classes} (or {\sl icc} for short) if $\G$ is infinite and all of its conjugacy classes, except $\{1\}$, are infinite. The icc property has been studied in several classes of discrete groups: free groups and free products in \cite{roiv}, groups acting on a Bass-Serre tree in \cite{ydc}, fundamental groups of 3-manifolds and $PD(3)$-groups in \cite{aogf3v}.

The main motivation for studying this property in discrete groups arises from the theory of von Neumann algebras (cf. \cite{dixmier}). For a discrete group  $\G$ one defines the von Neumann algebra $W_\lambda^*(\G)$ of $\G$ which yields important examples of more general {\sl von Neumann algebras}, {\it i.e.}  of $*$-algebras of bounded operators on a (separable) Hilbert space closed in the weak topology and containing the identity. The decomposition theorem of von Neumann asserts that each von Neumann algebra on a separable Hilbert space is a 'direct integral' of {\sl factors}, {\it i.e.}  of von Neumann algebras whose centers are reduced to the scalar operators $\C$. Therefore, the problem of classifying isomorphic classes of von Neumann algebras reduces to that of classifying isomorphic classes of factors. The factors fall into several types among which {\sl factors of type $II_1$} play an important role and are intensively studied. Concerning von Neumann algebras of groups, factors can only be of  type $II_1$ and are characterized by the\smallskip\\
{\bf Murray-von Neumann characterization.} {\sl A von Neumann algebra $W_\lambda^*(\G)$ of a discrete group $\G$  is a factor of type $II_1$ if and only if $\G$ is icc.}\smallskip\\
Thus icc discrete groups provide examples of factors of type $II_1$; uncountably many as stated in \cite{mcduff}. That makes  interesting asking whether an arbitrary discrete group is icc other not.

This work is concerned with the icc property for {\sl extensions of groups} (cf. \cite{rotman}). We give a characterisation of the icc property by mean of the invariants of the extension, then we particularize this general result in cases of extensions that split: semi-direct products and wreath products; among several other cases and examples. 

In section 1 we state two concise conditions that are sufficient for a group that decomposes as an extension to be icc. The former one is also necessary and we characterize it by a precise statement; we give an example proving that the latter one is not a necessary condition, despite what one should expect. In section 2 we weaken that last condition in order to obtain necessary and sufficient conditions; this yields the main result; we also give noteworthy reformulations under additional hypotheses. 
The sections 3 and 4 are devoted to particularizations of the general result to, respectively, semi-direct products and  wreath products
(both complete and restricted). In the last section we give examples of what the result becomes in particular cases, among that groups with a proper finite index subgroup, amalgams of groups and HNN extensions (recovering briefly the results stated in \cite{ydc}). 
Those last examples are an illustration that our approach applies  in full generality to the study of the icc property in groups: on the one hand a simple group is icc whenever it is infinite (cf. Proposition \ref{simplecci}), on the other hand any non-simple group decomposes
non-trivially as an extension and our results apply.

\setcounter{section}{-1}

\section{\bf Notations and preliminaries}
\label{section1}

Let $G$ be a group,  $H$ a non-empty subset of $G$, {\it e.g.} a subgroup,  and $x,y\in G$, we use notations $\null^yx:=yxy^{-1}$ and $\null^Hx:=\{\null^yx;y\in H\}$; in particular, $\null^Gx$ denotes the conjugacy class of $x$ in $G$. The centralizer of $x$ in $H$ and the center of $G$ are denoted respectively by $C_H(x)$ and $Z(G)$.
Note that the cardinal $|^Gx|$ of $\null^Gx$ equals the index $[G:C_G(x)]$ of $C_G(x)$ in $G$; indeed, the group $G$ acts transitively on $\null^Gx$, and $x$ has stabilizer $C_G(x)$. In particular, icc groups have trivial centers, and a direct product of
 groups $\not=\{1\}$  is icc if and only if each of its factors is icc.\smallskip

\begin{prop}\label{simplecci} A simple group is icc if and only if it is infinite.
\end{prop}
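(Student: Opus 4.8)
The plan is to prove the two implications separately, the forward one being immediate. If $\G$ is icc then by definition it is infinite, and this uses nothing about simplicity. So the whole content lies in the converse: I must show that an infinite simple group $G$ is automatically icc, i.e. that every conjugacy class $\null^Gx$ with $x\neq 1$ is infinite.

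First I would translate the goal into a statement about indices using the observation recorded in the preliminaries, namely $|\null^Gx|=[G:C_G(x)]$. Thus it suffices to show that $C_G(x)$ has infinite index in $G$ for every $x\neq 1$. I would argue by contradiction: suppose $n:=[G:C_G(x)]$ is finite for some $x\neq 1$. The idea is then to manufacture a nontrivial normal subgroup and exploit simplicity. Concretely, I would let $G$ act on the finite set of left cosets $G/C_G(x)$ by left translation; this yields a homomorphism $\vf\colon G\to \mathrm{Sym}(G/C_G(x))\cong S_n$ whose kernel $K$ is the normal core $\bigcap_{g\in G}\null^gC_G(x)$ of $C_G(x)$.

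Now I would invoke simplicity of $G$, which forces $K=\{1\}$ or $K=G$. If $K=\{1\}$ then $\vf$ is injective, so $G$ embeds into the finite group $S_n$, contradicting that $G$ is infinite. If $K=G$ then $G=K\subseteq C_G(x)$, so $C_G(x)=G$ and $x\in Z(G)$; since $x\neq 1$ this makes $Z(G)$ a nontrivial normal subgroup, whence $Z(G)=G$ by simplicity, so $G$ is abelian. A simple abelian group has no proper nontrivial subgroups, so it is cyclic of prime order and in particular finite, again contradicting infiniteness. Both horns being impossible, no such $x$ exists and $G$ is icc.

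The one delicate point, which I regard as the main (if modest) obstacle, is the bookkeeping in the two-case split coming from simplicity: one must be sure that the kernel $K$ is genuinely normal (it is, being an intersection of conjugates of one subgroup, equivalently the kernel of $\vf$) and that each case really terminates in a contradiction, including the degenerate subcase $n=1$ where $x$ turns out to be central and one is reduced to classifying simple abelian groups. Everything else is routine, relying only on the index-equals-cardinality fact already available and the coset-action construction.
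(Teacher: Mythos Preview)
Your argument is correct and follows essentially the same route as the paper: both pass to the normal core $K=\bigcap_{g}\null^gC_G(x)$ of a finite-index centralizer, use simplicity to force $K=\{1\}$ or $K=G$, and rule out each case via finiteness (the paper phrases the $K=\{1\}$ case as ``$K$ has finite index, hence $K\neq\{1\}$'', which is equivalent to your embedding into $S_n$) and via the center being trivial in an infinite simple group. Your write-up is slightly more explicit in unwinding why an infinite simple group has trivial center, but the underlying idea is identical.
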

\begin{proof}
An icc group is infinite, so let $G$ be an infinite simple group, we need to prove that $G$ is icc. Suppose on the contrary that $G$ is not icc, and let $g\in G\setminus\{1\}$ with centralizer $C_G(g)$ of finite index in $G$. The intersection $C$ of all conjugates of $C_G(g)$ in $G$ is normal with finite index in $G$. Since $G$ is infinite $C\not= \{1\}$, since $G$ is simple $C=G$. But $C$ has a non trivial center which is impossible for an infinite simple group.
\end{proof}

\section{\bf Sufficient conditions that are not necessary}

We say that a subgroup $N$ of a group $G$ is {\sl finitely normalized} by $G$ if $N$ is normal in $G$ and the action of $G$ by conjugacy on $N$ has only finite orbits. For $K$ a normal subgroup of $G$ we denote by:
$$
FC_G(K)=\left\lbrace u\in K\,;\, |^Gu|<\infty\right\rbrace
$$ 
the union of finite $G$-conjugacy classes in $K$. It's easily seen that $FC_G(K)$ is a normal subgroup of $G$, the greatest subgroup of $K$ which is finitely normalized by $G$. Let denote by $FC(G)=FC_G(G)$ the union of finite conjugacy classes in $G$; $FC(G)$ is a characteristic subgroup of $G$ and $G\not=\{1\}$ is icc if and only if $FC(G)=\{1\}$.\medskip\\ 
\noindent
Obviously $FC_G(K)\not=\{1\}$ implies that $G$ is not icc. The condition $FC_G(K)\not=\{1\}$ is characterized by:
\begin{prop}\label{FCGN}
Let $K$ be a normal subgroup of $G$; then $FC_G(K)\not=\{1\}$ if and only if one of the following conditions occurs:
\begin{itemize}
\item[(i)] $K$ contains a non-trivial finite subgroup normal in $G$,
\item[(ii)] $K$ contains a non-trivial free Abelian subgroup $\Z^n$ normal in $G$ such that the induced homomorphism $G\longrightarrow GL(n,\Z)$ has a finite image.
\end{itemize}
\end{prop}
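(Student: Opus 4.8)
The plan is to treat the two implications separately; write $A=FC_G(K)$, which by the preceding remark is a normal subgroup of $G$. Note that $A$ is itself an FC-group, since for $a\in A$ one has $\null^Aa\subseteq\null^Ga$ finite, and the same inclusion shows that every subgroup of $A$ is again an FC-group. The converse is immediate: if (i) holds and $F\leq K$ is a non-trivial finite subgroup normal in $G$, then any $u\in F\setminus\{1\}$ satisfies $\null^Gu\subseteq F$, so $u\in FC_G(K)$; if (ii) holds and $\Z^n\leq K$ is normal in $G$ with finite image in $GL(n,\Z)$, then for $u\in\Z^n\setminus\{1\}$ the class $\null^Gu$ lies in the finite $G$-orbit of $u$, so again $u\in FC_G(K)$. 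In both cases $FC_G(K)\neq\{1\}$.

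For the direct implication, suppose $A\neq\{1\}$ and distinguish two cases. If $A$ contains an element $u$ of finite order $>1$, then $X=\null^Gu$ is a finite conjugation-invariant subset of $K$ all of whose elements have finite order, so Dietzmann's lemma shows that $\langle X\rangle$ is finite. This subgroup is non-trivial, contained in $K$, and normal in $G$ since it is generated by the $G$-invariant set $X$; hence (i) holds.

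Otherwise $A$ is torsion-free. Choose $u\in A\setminus\{1\}$ and set $B=\langle\null^Gu\rangle$, a finitely generated subgroup of $A$ that is normal in $G$ and contained in $K$. As a finitely generated FC-group, $B$ has centre of finite index, because each of its finitely many generators has a centralizer of finite index and the intersection of these centralizers is $Z(B)$; Schur's theorem then makes the commutator subgroup $[B,B]$ finite, and since $B$ is torsion-free this forces $[B,B]=\{1\}$. Thus $B$ is a finitely generated torsion-free abelian group, i.e. $B\cong\Z^n$ with $n\geq 1$. Finally the conjugation action of $G$ permutes the finite generating set $\null^Gu$ of $B$, so the induced homomorphism $G\to GL(n,\Z)$ factors through the action of $G$ on this finite set and has finite image, giving (ii).

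The main obstacle is the torsion-free case: the subgroup $B$ is defined only abstractly, and one must promote it to a genuine $\Z^n$ carrying a $G$-action with finite image. This is exactly where the FC-group structure—finite-index centre together with Schur's theorem—and the permutation-action argument on the finite generating set $\null^Gu$ are needed, with Dietzmann's lemma playing the analogous role in the torsion case.
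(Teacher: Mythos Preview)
Your proof is correct and follows essentially the same skeleton as the paper's: take the subgroup $B=\langle\null^Gu\rangle$ generated by a finite $G$-class, use that its centralizer has finite index, and separate the torsion from the torsion-free situation. The organization differs slightly, however. The paper works with a single element $u\in FC_G(K)$ throughout, forms $N_u=\langle\null^Gu\rangle$, observes that $Z(N_u)$ has finite index in $N_u$ and is therefore a finitely generated abelian normal subgroup of $G$, and only then looks at the torsion subgroup of $Z(N_u)$ to decide between (i) and (ii). You instead split cases on whether $A=FC_G(K)$ contains torsion at the outset, invoking Dietzmann's lemma for the torsion case and, in the torsion-free case, Schur's theorem to upgrade ``centre of finite index'' to ``$[B,B]$ finite'', hence trivial, so that $B$ itself (not just its centre) is the free abelian subgroup. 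Your route quotes two classical named lemmas where the paper argues directly; it has the mild advantage that the $\Z^n$ you produce is exactly $\langle\null^Gu\rangle$ rather than its centre, while the paper's version avoids appealing to Schur.
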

\begin{proof}
Clearly if (i) or (ii) occurs then $FC_G(K)\not=\{1\}$. Let's show that (i) and (ii) are also necessary. Let $u\not=1$ lying in $FC_G(K)$ and $N_u$ the normal subgroup of $K$ finitely generated by all conjugates of $u$ in $G$. 
The centralizer $Z_G(N_u)$ of $N_u$ in $G$ has a finite index in $G$ since it equals the centralizer in $G$ of the finite family $\null^Gu$. Therefore the center $Z(N_u)=N_u\cap Z_G(N_u)$ of $N_u$ has finite index in $N_u$, and in particular $Z(N_u)$ is a finitely generated normal Abelian subgroup of $G$. Let ${\rm Tor}Z(N_u)$ be the subgroup of $Z(N_u)$ consisting of elements with finite order; ${\rm Tor}Z(N_u)$ is finite, and normal in $G$ since characteristic in $Z(N_u)$. If either $N_u$ is finite or ${\rm Tor}Z(N_u)\not=\{1\}$ then condition (i) holds. So we suppose in the following that $N_u$ is infinite and $Z(N_u)$ is torsion-free.
Under these hypotheses $Z(N_u)\simeq \Z^n$ for some $n>0$. The homomorphism:
$$\begin{array}{rcl}
G&\longrightarrow &Aut(Z(N_u)) \\
g&\longmapsto &\left(
v\longmapsto \null^gv
\right)
\end{array}
$$
composes with the isomorphism $Aut(Z(N_u))\simeq GL(n,\Z)$ into a homomorphism $\phi : G\longrightarrow GL(n,\Z)$. Given $g\in G$, the automorphism $x\mapsto\null^gx$ of $N_u$ permutes the finite generating set $\null^Gu$, and in particular $Aut(N_u)$ is finite as a subgroup of the symmetric group of $\null^Gu$. Since $Aut(N_u)$ projects onto, $\phi(G)$ is finite.
 Condition (ii) holds. 
\end{proof}

Now that we have a precise statement upon $FC_G(K)=\{1\}$, we use that condition to characterize the icc property in extensions of groups. As a first attempt the next result gives a sufficient condition, that turns to be non-necessary as  explained later. 

\begin{prop}\label{sufficient}
Let $G\not=\{1\}$ be a group that decomposes as an extension:
$$
1\longrightarrow K\longrightarrow G\overset{\proj}{\longrightarrow} Q\longrightarrow 1
$$
and $\Theta : Q\longrightarrow Out(K)$ the associated coupling. 
A sufficient condition for $G$ to be icc is:
\begin{itemize}
\item[(i)] $FC_G(K)=\{1\}$, and
\item[(ii)] $\Theta$ restricted to $FC(Q)$ is injective.
\end{itemize}
\end{prop}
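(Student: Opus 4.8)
The plan is to use the reformulation recalled just above the statement, that a group $G\neq\{1\}$ is icc if and only if $FC(G)=\{1\}$. So I would fix an element $g\in FC(G)$, i.e. one whose conjugacy class $\null^Gg$ is finite, and prove $g=1$. The overall strategy has two moves: first show $\proj(g)=1$, so that $g\in K$; and then invoke hypothesis (i). The second move is immediate, since $g\in K$ together with $|\null^Gg|<\infty$ means precisely $g\in FC_G(K)$, which is $\{1\}$ by (i). For the first move I would argue through the coupling. Note first that $\proj(g)\in FC(Q)$: since $\proj$ is surjective, the $Q$-conjugacy class of $\proj(g)$ is the image under $\proj$ of $\null^Gg$, hence finite. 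By hypothesis (ii), $\Theta$ is injective on $FC(Q)$, so to obtain $\proj(g)=1$ it suffices to show $\Theta(\proj(g))=\Theta(1)=1$ in $Out(K)$; equivalently, that the conjugation automorphism $x\mapsto\null^gx$ of $K$ is inner. I expect in fact to prove the stronger statement that it is the identity, i.e. that $g$ centralizes $K$.

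This last point is the heart of the argument and the step I expect to be the main obstacle. The key object is the commutator map $\beta\colon K\to K$, $u\mapsto[g,u]=gug^{-1}u^{-1}$, which lands in $K$ because $K$ is normal. I would first record that $C_K(g)=C_G(g)\cap K$ has finite index in $K$ (at most $[G:C_G(g)]=|\null^Gg|<\infty$), and that $\beta$ is a cocycle vanishing on $C_K(g)$, so it is constant on the cosets of $C_K(g)$ and therefore takes only finitely many values. The crucial computation is then that each value $[g,u]$ has finite $G$-conjugacy class: conjugating by $h\in G$ gives $\null^h([g,u])=[\null^hg,\null^hu]$, where $\null^hg$ ranges over the finite set $\null^Gg=\{g_1,\dots,g_m\}$ and $\null^hu\in K$. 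Since each $g_i$ again lies in $FC(G)$ (being conjugate to $g$), the same cocycle argument shows $v\mapsto[g_i,v]$ takes finitely many values as $v$ runs over $K$; hence $\null^G[g,u]$ is contained in the fixed finite set $\bigcup_{i=1}^m\{[g_i,v]:v\in K\}$.

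Consequently every $[g,u]$ lies in $FC_G(K)$, which is $\{1\}$ by (i). Thus $[g,u]=1$ for all $u\in K$, so $g$ centralizes $K$ and $\Theta(\proj(g))=1$, completing the first move; then (ii) forces $\proj(g)=1$, and the second move gives $g=1$. Since $g\in FC(G)$ was arbitrary this proves $FC(G)=\{1\}$, and as $G\neq\{1\}$ it is icc. The two hypotheses enter at clearly separated points: (i) is used both to kill the commutators $[g,u]$ and, at the very end, to conclude from $g\in K$; (ii) is used only to pass from the triviality of the outer action to $\proj(g)=1$. I would expect the cocycle/finite-values computation showing $[g,u]\in FC_G(K)$ to be the delicate part, since it is where the finiteness of $\null^Gg$ must be leveraged simultaneously in both coordinates of the commutator.
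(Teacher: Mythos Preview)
Your proof is correct and follows essentially the same approach as the paper: both hinge on the observation that for $g\in FC(G)$ and $u\in K$ the commutator $[g,u]$ lies in $FC_G(K)$, which forces $g$ to centralize $K$ (so $\Theta(\proj(g))=1$) and then lands $g$ in $K$. The paper presents this as a contrapositive case split and justifies $[u,w]\in FC(G)$ implicitly via the fact that $FC(G)$ is a subgroup (so $(uwu^{-1})w^{-1}\in FC(G)$), whereas you give an explicit finite-counting argument; both are fine.
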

\begin{proof}
Let $G$ decomposes as an extension as above. We suppose that $G$ is not icc and prove that one of the conditions (i) or (ii) fails.
Let $w\in G\setminus\{1\}$ with finite conjugacy class $\null^Gw$.\\
{\sl First case.} If $w\in K$, then $w\in FC_G(K)$ and condition (i) fails.\\
{\sl Second case.}  If $w\in G\setminus K$; note that $\proj(w)\in FC(Q)\setminus\{1\}$. Given $u\in K$ let $w_u=[u,w]$; on the one hand $w_u\in K$ and on the other hand $w_u\in FC(G)$.\\
{\sl First subcase.} If $\exists\,u\in K$ with $w_u=[u,w]\not=1$, then
$w_u\in FC_G(K)$ and condition (i) fails.\\
{\sl Second subcase.} If $\forall\, u\in K$, $[u,w]=1$, then $\Theta(\proj(w))=1$ and condition (ii) fails.
\end{proof}

\begin{cor} The icc property is stable by extension.
\end{cor}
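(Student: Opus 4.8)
The plan is to read ``stable by extension'' in the standard sense: if the kernel $K$ and the quotient $Q$ of an extension $1\longrightarrow K\longrightarrow G\overset{\proj}{\longrightarrow}Q\longrightarrow 1$ are both icc, then $G$ is icc. I would obtain this as an immediate consequence of Proposition \ref{sufficient}, by verifying that its two sufficient conditions (i) and (ii) hold automatically under these hypotheses. First note that $G\not=\{1\}$, since $K$ is icc hence infinite and $K\subseteq G$; so Proposition \ref{sufficient} is applicable.

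To check condition (i), namely $FC_G(K)=\{1\}$, the key point is the inclusion $FC_G(K)\subseteq FC(K)$. This follows because for every $u\in K$ one has $\null^Ku\subseteq\null^Gu$ (as $K\subseteq G$), whence $|^Ku|\leq|^Gu|$; thus an element of $K$ with finite $G$-conjugacy class has, a fortiori, finite $K$-conjugacy class, so that $FC_G(K)\subseteq FC(K)$. Since $K$ is icc, the characterization of the icc property recalled just before Proposition \ref{FCGN} gives $FC(K)=\{1\}$, and therefore $FC_G(K)=\{1\}$.

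To check condition (ii), I would invoke the same characterization for $Q$: being icc, $Q$ satisfies $FC(Q)=\{1\}$. The restriction of the coupling $\Theta$ to the trivial subgroup $FC(Q)=\{1\}$ is then injective for trivial reasons, so (ii) holds as well. Both hypotheses of Proposition \ref{sufficient} being met, I conclude that $G$ is icc.

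There is no genuine obstacle here: the proof is essentially a one-line deduction from Proposition \ref{sufficient}, and the only step meriting a moment's care is the inclusion $FC_G(K)\subseteq FC(K)$, which is precisely what lets me reduce a condition about $G$-conjugacy inside $K$ to the intrinsic icc-ness of $K$. I would add a single clarifying remark, that the statement establishes only one direction of stability (icc factors yield an icc extension); the converse fails, as the abelianization map from the free group of rank two onto $\Z^2$ exhibits an icc group with a non-icc quotient. Hence ``stable by extension'' is to be understood as closure under building extensions out of icc pieces.
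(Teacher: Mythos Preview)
Your proposal is correct and follows exactly the intended approach: the paper states the corollary immediately after Proposition~\ref{sufficient} without further proof, treating it as the obvious specialization where $K$ icc forces $FC_G(K)\subseteq FC(K)=\{1\}$ and $Q$ icc forces $FC(Q)=\{1\}$, so both sufficient conditions hold trivially. Your write-up simply makes these two observations explicit.
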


Clearly condition (i) is also necessary for $G$ to be icc. Condition (ii) is not, as shown in the following example.
 
\begin{example}
\label{example_notnecessary}
We construct an example of a group $G$ that decomposes as an extension, and that is icc although condition (ii) of proposition \ref{sufficient} fails.

\noindent
Consider  the groups given by finite presentations and their subgroups:
$$K=< a_1,a_2,k_0,k_1|\,[a_1,a_2],[a_i,k_j],i,j=1,2>\ \simeq (\ZZ)\times F_2$$
$$A\simeq\ZZ\ \text{the subgroup of $K$ generated by $a_1$,$a_2$,}$$ $$F_K\simeq F_2\ \text{the free subgroup of $K$ generated by $k_0,k_1$,}$$
$$Q=< q_0,q_1,q_2|\,[q_0,q_1]=[q_0,q_2]=1>\ \simeq\Z\times F_2$$
$$Q_0\ \text{the cyclic subgroup of $Q$ generated by $q_0$,}$$
$$F_Q\simeq F_2\ \text{the free subgroup of $Q$ generated by $q_1,q_2$~.}$$

Let $\phi,\,\psi\in SL(2,\Z)$ be the automorphisms of $A$ whose matrices with respect to the basis $a_1,a_2$ are:
$$
M_\phi=\left(\begin{matrix}
1&1\\
0&1
\end{matrix}
\right)\qquad
 M_\psi=\left(\begin{matrix}
1&0\\
1&1
\end{matrix}
\right)
$$
Consider the three automorphisms 
$\theta(q_0),\theta(q_1),\theta(q_2)\in Aut(K)$ defined by:\\ \indent$\forall\,k\in K,\, \forall\ a\in A$,
$$
\theta(q_0)(k)=\null^{k_0}k,\ \ \null\  
\left\lbrace\begin{array}{l}
\theta(q_1)(a)=\phi(a) \\
\theta(q_1)(k_0)=k_0a_2 \\
\theta(q_1)(k_1)=k_1
\end{array}\right.
,\ \ \null\  
\left\lbrace\begin{array}{l}
\theta(q_2)(a)=\psi(a) \\
\theta(q_2)(k_0)=k_0a_1 \\
\theta(q_2)(k_1)=k_1
\end{array}\right.
$$ 
One verifies that $\theta(q_0)$ both commutes with $\theta(q_1)$ and $\theta(q_2)$ so that $\theta$ extends to $\theta:Q\longrightarrow Aut(K)$.
Define $G=K\rtimes_{\theta}Q$;  $G$ is a split extension with coupling $\Theta=\Pi\circ\theta$ where $\Pi:Aut(K)\rightarrow Out(K)$ is the natural projection.\smallskip

\noindent
Here $FC(Q)=Q_0$ and  $\Theta$ is non injective when restricted to $Q_0$, indeed $\Theta(q_0)=1$ for $\theta(q_0)$ is inner. Therefore $G$ does not satisfy condition (ii) of proposition \ref{sufficient}. Let's prove that nevertheless $G$ is icc.\smallskip

\noindent
Note that $A$ is the center of $K$, so that $\theta$ induces $\ol{\theta}:Q\longrightarrow Aut(A)$; one has 
$\ol{\theta}(q_0)=Id$, $\ol{\theta}(q_1)=\phi$ and $\ol{\theta}(q_2)=\psi$. 
Moreover $FC(K)=A$ is free Abelian with rank 2. The maximal proper subgroup of $A$ 
preserved by $\phi$ (respectively $\psi$) is cyclic generated by $a_1$ (respectively $a_2$); hence  $\ol{\theta}(Q)$ does not preserve a proper  subgroup $\not=\{1\}$.
Moreover $\ol{\theta}(Q)$  is infinite, as well as its isomorphic image in $GL(2,\Z)$. According to proposition \ref{FCGN}, $FC_G(K)=\{1\}$. Hence for all $k\in K$, $\null^Gk$ is infinite, and it suffices now to prove that all elements in $G\setminus K$ have infinite conjugacy classes.\smallskip

\noindent 
An element in $G\setminus K$ is of the type: 
$w=akq_0^nq$ with $a\in A$, $k\in F_K$, $q\in F_Q$ and $n\in\Z^\ast$. Whenever $q\not=1$, $\null^Gw$ is infinite since it projects onto the infinite conjugacy class $\null^{F_Q}q$ in $F_Q$ free group of rank 2.  Consider now the remaining case $w=akq_0^n$. If $k$ does not lie in the subgroup generated by $k_0$, then $k_0^pwk_0^{-p}=ak_0^pkk_0^{-p}q_0^n$ and $\{k_0^pwk_0^{-p}\,;\,p\in\Z\}\subset \null^Gw$ is infinite.  
Hence we are now left with the case $w=ak_0^mq_0^n$, $m\in\Z,n\in\Z^*$.
Consider the set $\{q_1^pwq_1^{-p}\,;\,p\in\Z\}\subset\null^Gw$,
$$
q_1^pwq_1^{-p}=q_1^pak_0^mq_0^nq_1^{-p}=\phi^p(a)k_0^ma_2^m\phi(a_2^m)\cdots\phi^{p-1}(a_2^m),
$$
and one obtains that $\{q_1^pwq_1^{-p}\,;\,p\in\Z\}$ is finite if and only if:
\begin{equation}
\exists\,p\in\N,\ \phi^p(a)^{-1}a=a_2^m\phi(a_2^m)\cdots\phi^{p-1}(a_2^m)~.
\end{equation}
We now use additive notations and discuss on equation (1):
$$
\exists\,p\in\N,\ (Id-\phi^p)(a)=(Id+\phi+\cdots+\phi^{p-1})(a_2^m)~.
$$
One has:
$$
M_{\phi^p}=\left(\begin{matrix}1&p\\0&1\end{matrix}\right)\quad;\quad
\mathbb{I}-M_{\phi^p}=\left(\begin{matrix}0&-p\\0&0\end{matrix}\right)\quad;\quad
\mathbb{I}+M_\phi+\cdots+M_{\phi^{p-1}}=
\left(\begin{matrix}p&\frac{p(p-1)}{2}\\0&p\end{matrix}\right),
$$
and $a_2^m=\left(\begin{matrix} 0\\m\end{matrix}\right)$; set $a=\left(\begin{matrix} x\\ y\end{matrix}\right)$, the equation (1) becomes:
$$
\exists\,p\in\N,\quad \left(\begin{matrix}-py\\0\end{matrix}\right)
=\left(\begin{matrix} m\frac{p(p-1)}{2}\\ mp\end{matrix}\right)
$$
which should hold only for $m=0$. But when $m=0$, $w=aq_0^n$, on the one hand $q_1^pwq_1^{-p}=\phi^p(a)q_0^n$ and $\{q_1^pwq_1^{-p}\,;\,p\in\Z\}$
is infinite whenever $a$ and $a_2$ are not collinears; on the other hand  $q_2^pwq_2^{-p}=\phi^p(a)q_0^n$ and $\{q_2^pwq_2^{-p}\,;\,p\in\Z\}$
is infinite whenever $a$ and $a_1$ are not collinears. In conclusion $\null^Gw$ is infinite, and this proves that $G$ is icc.

\end{example}

\section{\bf General results on extensions of groups}\label{partie:extension}

We establish a necessary and sufficient condition for an arbitrary group $G$ that decomposes as an extension to be icc. Keeping in mind the sufficient conditions given in proposition \ref{sufficient}, this is done by weakening the condition (ii). 
\\

\noindent
We focus on a group $G\not=\{1\}$ which decomposes as an extension, {\it i.e.} that fits into a short exact sequence:
\[
\xymatrix{
1\ar[r]&K\ar[r]&G\ar^{\proj}[r]  & Q\ar[r] & 1
 }
\]
with $\Theta:Q\longrightarrow Out(K)$, the associated coupling. 
Throughout the section, $Z$ denotes the center $Z(K)$ of $K$ and for any $q\in Q$, $C(q)$ denotes the centralizer $C_Q(q)$ of $q$ in $Q$. 

\subsection{A preliminary definition}\label{definitionH1}
We fix a section $s:Q\longrightarrow G$, {\it i.e.} a map such that $\forall q\in Q, \proj\circ s(q)=q$\,; we will rather denote $\overline{q}:=s(q)$. 
The section $s$ defines $\forall q\in Q$ a lift $\theta_q$ of $\Theta(q)$ in $Aut(K)$ defined by $\theta_q(x)=\ol{q}\,x\,\ol{q}^{-1}=\up{x}{\ol{q}}$ for all $x\in K$ (for more convenience, we shall use both notations $\theta_q(x)$ and $\up{x}{\ol{q}}$). We will also write $\theta_a(x)=\up{x}{a}$ for all $a,x\in K$. \smallskip

Let $q\in \ker \Theta$; {\it i.e.} $q$ lies in $Q$ and there exists $k\in K$ (depending on $s$) such that $\theta_q(x)=k\,x\,k^{-1}$ for all $x\in K$. given $u\in C(q)$, $uqu^{-1}=q$ in $Q$; hence  there exists an element $\delta_q(u)$ in $K$ (depending on $s$), defined by:
$$\ol{u}\,\ol{q}\,\ol{u}^{-1}=\ol{q}\,\delta_q(u)\quad \text{in $G$}~.$$

Keeping these notations in mind, the next proposition asserts that each $q\in\ker\Theta$ defines an element $[q]$ in the first cohomology group $H^1(C(q),Z)$; it will be proved in section \S \ref{souspartie:extension,preuve}.\smallskip

\begin{propdef}\label{H1ext}
A section $s:Q\longrightarrow G$ is given. Let $q\in\ker\Theta$ and $k\in K$ be such that $\forall\, x\in K$, $\theta_q(x)=kxk^{-1}$.

\begin{itemize}
\item[(1)] For any $u\in C(q)$ let $\delta_q(u)\in K$ be such that $\ol{u}\,\ol{q}\,\ol{u}^{-1}=\ol{q}\,\delta_q(u)$. Then:\\
 the element $d_q(u):=\delta_q(u)^{-1}k^{-1}\,\up{k}{\ol{u}}$ lies in the center $Z$ of $K$.\smallskip
 \item[(2)] Define the map:
$$\begin{array}{cccl}d_q: 
&C(q)&\longrightarrow &Z\\
&u&\longrightarrow & d_q(u):=\delta_q(u)^{-1}k^{-1}\,\up{k}{\ol{u}}~.
\end{array}
$$
The map $d_q$ is a 1-cocycle.\smallskip
\item[(3)] The cocycle $d_q:C(q)\longrightarrow Z$ defines an element $[q]$ in $H^1(C(q),Z)$ that only depends on $q\in\ker\Theta$ and on the equivalence class of the extension $G$ of $K$ by $Q$.
\end{itemize}
\end{propdef}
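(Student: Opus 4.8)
The plan is to introduce a single auxiliary element that absorbs all the ambiguities (the choice of $k$, the choice of section, and the factor set of $s$) and reduces the three assertions to short computations inside the abelian group $Z$. Since $q\in\ker\Theta$ and $\theta_q$ is conjugation by $k$, I set $g_0:=\ol q\,k^{-1}$. For every $x\in K$ one has $g_0\,x\,g_0^{-1}=\theta_q(k^{-1}xk)=x$, so $g_0$ lies in the centralizer $C_G(K)$ of $K$ in $G$, and clearly $\proj(g_0)=q$. Recall that $C_G(K)$ is normal in $G$ and that $K\cap C_G(K)=Z$. Substituting $\ol q=g_0 k$ into the definition of $d_q(u)$ and repeatedly using that $g_0$ and its conjugate $\up{g_0}{\ol u}$ commute with every element of $K$, I expect the definition to collapse to the clean formula
$$
d_q(u)=\bigl(\up{g_0}{\ol u}\bigr)^{-1}g_0 .
$$
This already gives (1): the right-hand side projects to $q^{-1}q=1$, so it lies in $K$; and it is a product of two elements of the normal subgroup $C_G(K)$, so it lies in $C_G(K)$; hence it lies in $K\cap C_G(K)=Z$.

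For (2) I would first observe that $\up{g_0}{\ol u}$ does not depend on the chosen lift $\ol u$ of $u$: replacing $\ol u$ by $\ol u\kappa$ with $\kappa\in K$ changes nothing, because $g_0$ centralizes $\kappa$. In particular the factor set of $s$ disappears and one gets $\up{g_0}{\ol{uv}}=\up{(\up{g_0}{\ol v})}{\ol u}$. The cocycle identity $d_q(uv)=d_q(u)\cdot\up{(d_q(v))}{\ol u}$ then becomes a manipulation of the three elements $g_0$, $\up{g_0}{\ol u}$ and $\up{g_0}{\ol{uv}}$: expanding the right-hand side gives $\bigl(\up{g_0}{\ol u}\bigr)^{-1}g_0\,\bigl(\up{g_0}{\ol{uv}}\bigr)^{-1}\up{g_0}{\ol u}$, and the key point is that the two factors $d_q(u)$ and $\up{(d_q(v))}{\ol u}$ both lie in the abelian group $Z$, so they commute; the middle factor $\up{g_0}{\ol u}$ then cancels, leaving $\bigl(\up{g_0}{\ol{uv}}\bigr)^{-1}g_0=d_q(uv)$.

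For (3) the formula makes independence of the section immediate, since $\up{g_0}{\ol u}$ was just seen to be section-independent once $g_0$ is fixed. Independence of the choice of $k$ amounts to independence of the choice of $g_0$: any two admissible elements $g_0,g_0'\in C_G(K)$ over $q$ differ by an element $z\in K\cap C_G(K)=Z$, and replacing $g_0$ by $g_0 z$ alters $d_q$ by $u\mapsto z\,\bigl(\up{z}{\ol u}\bigr)^{-1}$, which is a coboundary, so the class $[q]\in H^1(C(q),Z)$ is unchanged. Finally, for two equivalent extensions an equivalence $G\to G'$ is the identity on $K$ and induces the identity on $Q$, hence restricts to an isomorphism $C_G(K)\to C_{G'}(K)$ commuting with the projections and fixing $Z$ pointwise; it therefore carries an admissible $g_0$ to an admissible $g_0'$ and, evaluating $d_q'$ on the lifts $f(\ol u)$, identifies the two cocycles. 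Thus $[q]$ depends only on $q$ and on the equivalence class of the extension.

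The main obstacle I anticipate is organizational rather than conceptual: the raw definition of $d_q$ entangles the non-homomorphic section, the inner-automorphism witness $k$, and the elements $\delta_q(u)$, each pinned down only up to a central factor. The whole difficulty dissolves once one passes to the device $g_0=\ol q\,k^{-1}\in C_G(K)$, after which every verification takes place inside the abelian group $Z$. The two places that still demand genuine care are the cancellation in the cocycle identity and the identification of cocycles under an equivalence of extensions, but both become routine once the reformulation $d_q(u)=\bigl(\up{g_0}{\ol u}\bigr)^{-1}g_0$ is in hand.
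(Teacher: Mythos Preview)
Your proposal is correct and follows a genuinely different and cleaner route than the paper.

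The paper proves all three parts by direct expansion: for (1) it compares $\theta_u\theta_q\theta_u^{-1}$ with $\theta_q\theta_{\delta_q(u)}$; for (2) it introduces the factor set $f(u,v)$ of the section and carries out a page-long computation tracking $\delta_q(uv)$ and $\theta_{uv}(k)$; for (3) it treats separately the change $k\mapsto kz$, the change $\ol q\mapsto \ol q\,\widehat q$, and the change $\ol u\mapsto \ol u\,\widehat u$, each by an explicit calculation.

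Your device $g_0=\ol q\,k^{-1}\in C_G(K)$ bypasses all of this. Once one checks (as you indicate, and as a two-line computation confirms) that $d_q(u)=\bigl(\up{g_0}{\ol u}\bigr)^{-1}g_0$, the factor set never appears: part (1) is immediate from $K\cap C_G(K)=Z$; the cocycle identity reduces to commuting two elements of the abelian group $Z$; and independence of all auxiliary choices collapses to the single observation that the admissible $g_0$'s form a coset of $Z$ in $C_G(K)$ that is intrinsic to $q$ and the extension. In effect you have identified $d_q$ as the image of $g_0$ under the connecting map for the short exact sequence $1\to Z\to C_G(K)\to \proj(C_G(K))\to 1$ restricted to $C(q)$, which is why everything becomes formal. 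The paper's approach has the virtue of staying entirely within the standard cocycle bookkeeping of extensions, but yours is both shorter and more illuminating as to \emph{why} the class is well defined.

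Two minor points to tighten when you write it out in full: actually perform the substitution showing $d_q(u)=\bigl(\up{g_0}{\ol u}\bigr)^{-1}g_0$ (it is short: insert $\ol q=g_0k$ into $\delta_q(u)^{-1}=\ol u\,\ol q^{-1}\ol u^{-1}\ol q$ and use that $\up{g_0}{\ol u}$ centralizes $\up{k}{\ol u}$); and note explicitly that changing the section at $q$ and simultaneously adjusting $k$ leaves $g_0$ literally unchanged, so the only residual ambiguity in $g_0$ is the $Z$-coset coming from the choice of $k$, exactly as you use.
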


The first cohomology group $H^1(C(q),Z)$ is the quotient group of the Abelian group of {\sl crossed homomorphisms} with respect to multiplication:
$$
f:C(q)\longrightarrow Z,\quad\text{such that}\ \forall\,u,v\in C(q),\
f(uv)=f(v)\left(\null^{\ol{v}}f(u)\right)~. 
$$
by the normal subgroup of those crossed homomorphisms that are {\sl principal}:
$$
f:C(q)\longrightarrow Z,\quad\text{such that}\ \exists\,z\in Z,\,\forall\,u\in C(q),\ f(u)=z^{-1}\,\null^{\ol{u}}z~.
$$

\begin{remark}
\label{H1split} In case the extension splits, $G=K\rtimes Q$:\smallskip\\
For any  $q\in\ker\Theta$,  the homology class $[q]\in H^1( C(q),Z)$ given by proposition-definition \ref{H1ext} is:
let $\theta_q(x)=k\,x\,k^{-1}$ for $k\in K$, then $[q]$ is represented by the 1-cocycle:
$$\begin{array}{ccc} C(q)&\longrightarrow &Z\\
u&\longrightarrow &[k^{-1},u]
\end{array}
$$ 
and does not depend on the choice of $k\in K$ such that $\theta_q(x)=\up{x}{k}$.
%
\medskip
\end{remark}

\subsection{Statement of the main result}
We can now enunciate a necessary and sufficient condition for a group $G$ which decomposes as an extension to be icc. (The proof is achieved in \S \ref{proofextension}.)

\begin{thm}[icc extension] \label{extension}
Let $G\not=\{1\}$ be a group that decomposes as an extension:
\[
\xymatrix{
1\ar[r]&K\ar[r]&G\ar^{\proj}[r]  & Q\ar[r] & 1
 }
\]
Let $\Phi:FC(Q)\longrightarrow Out(K)$ denotes the restriction of the coupling $\Theta$ to $FC(Q)$.\smallskip

A necessary and sufficient condition for $G$ to be icc is:
\begin{itemize}
\item[(i)] $FC_G(K)=\{1\}$, and
 \item[(ii)] $\ker \Phi$ does not contain an element $q\not=1$ such that $[q]=0$ in $H^1(C(q),Z)$.
\end{itemize}
\end{thm}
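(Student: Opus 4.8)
The plan is to use the characterization that $G\not=\{1\}$ is icc if and only if $FC(G)=\{1\}$, and to decide, for each $w\in G\setminus\{1\}$, whether $\null^Gw$ is finite by splitting according to whether $w$ lies in $K$ or not. If $w\in K$, then $\null^Gw$ finite means exactly $w\in FC_G(K)$, so this case is governed by condition (i), which is thus plainly necessary. For $w\in G\setminus K$ put $q=\proj(w)$; since $\proj(\null^Gw)=\null^Qq$, finiteness of $\null^Gw$ forces $q\in FC(Q)\setminus\{1\}$. Exactly as in the proof of Proposition \ref{sufficient}, for $u\in K$ one has $[u,w]=(\up{w}{u})w^{-1}\in FC(G)\cap K=FC_G(K)$, so under (i) every such commutator is trivial; hence $w$ centralizes $K$, which forces $\Theta(q)=1$, i.e. $q\in\ker\Phi$, and $w=\ol q\,k^{-1}z$ for some $z\in Z$, where $k\in K$ realizes $\theta_q$ as an inner automorphism. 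Thus the whole problem reduces to analyzing, for $q\in\ker\Phi\setminus\{1\}$, the finiteness of $\null^Gw$ for $w$ of the form $w=w_0z$ with $w_0=\ol q\,k^{-1}$ and $z\in Z$.

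The core computation is to express this finiteness cohomologically. Since $w$ centralizes $K$, the conjugate $gwg^{-1}$ depends only on $\proj(g)$, so $\null^Gw=\{\ol p\,w\,\ol p^{-1}\,;\,p\in Q\}$ fibers over the finite set $\null^Qq$. A direct computation in the notation of Proposition-Definition \ref{H1ext} gives, for $u\in C(q)$,
$$\ol u\,w\,\ol u^{-1}=w\,c_z(u),\qquad c_z(u)=z^{-1}\,d_q(u)^{-1}\,\up{z}{\ol u}\in Z,$$
where $c_z:C(q)\longrightarrow Z$ is a crossed homomorphism whose cohomology class does not depend on $z$ and vanishes if and only if $[q]=0$, the trivial cocycle being attained for a suitable $z$ exactly in that case. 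All fibers of $\null^Gw\to\null^Qq$ share the cardinality $|c_z(C(q))|$, and $\proj(C_G(w))=\ker c_z$; since $[Q:C(q)]<\infty$, this yields the clean equivalence that $\null^Gw$ is finite if and only if $\ker c_z$ has finite index in $C(q)$. This already gives the necessity direction: if some $q\in\ker\Phi\setminus\{1\}$ satisfies $[q]=0$, choosing $z$ with $c_z\equiv 1$ produces $w\not=1$ with finite conjugacy class, so $G$ is not icc, and condition (ii) is necessary (condition (i) having been seen necessary above).

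The remaining and main difficulty is the converse needed for sufficiency: assuming (i), I must show that if $\ker c_z$ has finite index in $C(q)$ for some $z$, then in fact $[q]=0$. The naive restriction–corestriction argument only gives $[C(q):\ker c_z]\cdot[q]=0$, leaving a torsion gap that must be closed using (i). I would close it as follows. Let $C''$ be the normal core of $\ker c_z$ in $C(q)$, still of finite index, on which $c_z$ vanishes. The cocycle relation then shows that each value $c_z(v)$ is fixed by $C''$ and that $c_z$ is constant on the cosets of $C''$, so $c_z$ is inflated from the finite quotient $C(q)/C''$ with values in $Z^{C''}$; its finitely many values generate a finitely generated subgroup $P\subseteq Z$ that is $C(q)$-invariant and on which $C(q)$ acts through the finite group $C(q)/C''$. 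Consequently every element of $P$ has a finite $C(q)$-orbit for the conjugation action, hence—because $[Q:C(q)]<\infty$ as $q\in FC(Q)$—a finite $G$-conjugacy class, so $P\subseteq FC_G(K)$. Condition (i) forces $P=\{1\}$, i.e. $c_z\equiv 1$ and $[q]=0$, contradicting (ii). This completes sufficiency: under (i) and (ii) every $w\in G\setminus K$ has infinite conjugacy class, and by (i) so does every $w\in K\setminus\{1\}$, whence $G$ is icc.
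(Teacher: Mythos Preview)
Your proof is correct and follows essentially the same route as the paper's. Both arguments establish necessity by exhibiting, when $[q]=0$, an element $\omega=\ol q\,k^{-1}z$ (your $w_0z$ with $c_z\equiv 1$; the paper's Lemma \ref{lastlemma}) whose centralizer contains $\pi^{-1}(C(q))$; and both establish sufficiency by showing that if $w\in G\setminus K$ has a finite conjugacy class, then under (i) the commutators $[\ol u,w]$ for $u\in C(q)$---equivalently the values $c_z(u)$ of your cocycle---lie in $FC_G(K)$ and hence vanish, forcing $[q]=0$. The only notable difference is one of packaging: where the paper picks a single $p\in C(q)\setminus \proj(C_G(w))$ and observes directly that $C_G([\ol p,w])\supset C_G(w)\cap \ol p\,C_G(w)\,\ol p^{-1}$ has finite index, you instead pass to the normal core $C''$ of $\ker c_z$ and argue that $C(q)$ acts on the finitely generated group $P$ of values through the finite quotient $C(q)/C''$, whence $P\subset FC_G(K)$. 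Your route is a little more structural but not materially different; the paper's single-element argument is shorter and avoids the detour through $C''$ and $P$.
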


\medskip


\begin{example}
\label{examplescenterless} In Example \ref{example_notnecessary}, we have exhibited a group $G$ which is icc while holds condition (i) but not condition (ii) of proposition \ref{sufficient}. Let's verify that $G$ satisfies condition (ii) of Theorem \ref{extension}.\smallskip\\
One has $Z=A$, $C(q)=Q$, $FC(Q)=Q_0$ and $\ker\Phi=Q_0$ is cyclic generated by $q_0$; let $q\not=1\in \ker\Phi$, 
$q=q_0^n$, hence $[q]\in H^1(Q,A)$ is represented by the 1-cocycle:
$$\begin{array}{rl}
 d_q:&Q\longrightarrow A\\
 &u\longmapsto [k_0^{-n},u]
 \end{array}
 $$
  (cf. remark \ref{H1split}); one has $d_q(q_1)=a_2^n$.
Suppose that $[q]=0$, therefore $\exists\,a\in A$ such that $\forall\,u\in Q$, $d_q(u)=a^{-1}\,\null^{{u}}a$. By taking $u=q_1$ one obtains $d_q(q_1)=a^{-1}\phi(a)$. We now use additive notations in the basis $a_1,a_2$ of $A$. If $[q]=0$ there exists $a=(x,y)$ in $A$ such that:
$$
(\phi-Id)(a)=na_2\quad\iff\quad \left(\begin{matrix}0&1\\0&0\end{matrix}\right)\left(\begin{matrix}x\\y
\end{matrix}\right)=\left(\begin{matrix}y\\0\end{matrix}\right)
=\left(\begin{matrix}0\\n\end{matrix}\right)
$$
which is impossible. Therefore for any $q\in \ker\Phi\setminus\{1\}$, $[q]\not=0$; the condition (ii) of Theorem \ref{extension} holds.
\end{example}

\begin{example}
\label{examplesextension}
Additional hypotheses on the groups involved can make the result more concise. We emphasize here some examples (see also \S \ref{pc}):
\begin{itemize}
\item[--] {\sl whenever $K$ is centerless, $G$ is icc if and only if condition (i) holds and:\\ $\Phi :FC(Q)\longrightarrow Out(K)$ is injective, }
\end{itemize}
in particular:
\begin{itemize}
\item[--] {\sl whenever $K$ is icc,  $G$ is icc if and only if $\Phi :FC(Q)\longrightarrow Out(K)$ is injective, }
\end{itemize}
with Proposition \ref{simplecci}, 
\begin{itemize}
\item[--] {\sl whenever $K$ is simple,  $G$ is icc if and only if $K$ is infinite and $\Phi :FC(Q)\longrightarrow Out(K)$ is injective. }
\end{itemize}
Moreover, if $K$ is centerless the extension is characterized up to equivalence by the coupling $\Theta:Q\longrightarrow Out(K)$ (cf. Corollary 6.8, Chap IV, \cite{brown}); it follows that:
\begin{itemize}
\item[--] {\sl whenever $K$ is centerless and $Q$ is simple, $G$ is icc if and only if $K$ is icc and either $Q$ is infinite or the extension is not equivalent to $K\times Q$.}
\end{itemize}
\end{example}

\subsection{A noteworthy particular case}
\label{fcqfg2}
(Results in this section are proved in \S \ref{preuvehomomorphism}).
The 1-cocycle associated to an element in $\ker \Phi$, $\Phi : FC(Q)\longrightarrow Out(K)$, defined in proposition \ref{H1ext} yields also a homomorphism from $\ker\Phi$ to $H^1(C_Q(FC(Q),Z)$. 

\begin{propdef}\label{defhomomorphism}
There exists a homomorphism 
$$\Xi:\ker\Phi\longrightarrow H^1(C_Q(FC(Q)),Z)~,$$
 defined by:
for all $q\in \ker\Phi$, $\Xi(q)$ is represented by the 1-cocycle $d_q$:
$$\begin{array}{crcl}d_q: 
&C_Q(FC(Q))&\longrightarrow &Z\\
&u&\longrightarrow & d_q(u)=\delta_q(u)^{-1}k^{-1}\,\up{k}{\ol{u}\,}\end{array}
$$ 
where $\theta_q(x)=k\,x\,k^{-1}$ for all $x\in K$.
\end{propdef}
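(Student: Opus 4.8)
The plan is to build $\Xi$ by restricting, for each $q\in\ker\Phi$, the $1$-cocycle $d_q$ of Proposition-Definition \ref{H1ext} to the fixed subgroup $C_Q(FC(Q))$, and then to check multiplicativity by a short commutator computation. First note that $\Phi$ is a homomorphism, so $\ker\Phi$ is a subgroup of $FC(Q)$; and since every $q\in\ker\Phi$ lies in $FC(Q)$, any $u\in C_Q(FC(Q))$ commutes with $q$, whence $C_Q(FC(Q))\subseteq C(q)$. Thus $d_q:C(q)\to Z$ restricts to a $1$-cocycle on $C_Q(FC(Q))$, and I set $\Xi(q)$ to be its cohomology class, that is, the image of $[q]\in H^1(C(q),Z)$ under the restriction map $H^1(C(q),Z)\to H^1(C_Q(FC(Q)),Z)$. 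Its well-definedness (independence of the choices of $s$ and of $k$) is exactly the content of Proposition-Definition \ref{H1ext}(3).

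The key step is a reformulation of $d_q$ as a single commutator, generalizing Remark \ref{H1split}. Given $q\in\ker\Phi$ and $k\in K$ with $\theta_q(x)=kxk^{-1}$, set $t_q:=k^{-1}\ol q$. Then for every $x\in K$ one has $t_q\,x\,t_q^{-1}=k^{-1}\theta_q(x)k=x$, so $t_q$ lies in $C_G(K)$, the centralizer of $K$ in $G$, and $\proj(t_q)=q$. Using $\ol q=kt_q$, the defining relation $\ol u\,\ol q\,\ol u^{-1}=\ol q\,\delta_q(u)$, and the fact that $t_q$ commutes with the elements $\delta_q(u)\in K$, a direct manipulation gives
\[
\ol u\,t_q\,\ol u^{-1}=d_q(u)^{-1}t_q,\qquad\text{equivalently}\qquad d_q(u)=[t_q,\ol u],
\]
for all $u\in C(q)$. (In the split case $t_q=k^{-1}q$, and this recovers the formula $d_q(u)=[k^{-1},u]$ of Remark \ref{H1split}.)

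It remains to prove $\Xi(q_1q_2)=\Xi(q_1)\Xi(q_2)$. Pick $t_{q_1}=k_1^{-1}\ol{q_1}$ and $t_{q_2}=k_2^{-1}\ol{q_2}$ as above; their product $t_{q_1}t_{q_2}\in C_G(K)$ projects to $q_1q_2$, so writing $t_{q_1}t_{q_2}=k^{-1}\ol{q_1q_2}$ with $k\in K$ yields a $k$ with $\theta_{q_1q_2}(x)=kxk^{-1}$ (because $t_{q_1}t_{q_2}$ centralizes $K$). Hence $t_{q_1}t_{q_2}$ is an admissible choice of $t_{q_1q_2}$, and by the independence statement \ref{H1ext}(3) I may compute $d_{q_1q_2}$ from it. For $u\in C_Q(FC(Q))$ the commutator identity $[xy,z]=x[y,z]x^{-1}[x,z]$ then gives
\[
d_{q_1q_2}(u)=[t_{q_1}t_{q_2},\ol u]=t_{q_1}\,d_{q_2}(u)\,t_{q_1}^{-1}\,d_{q_1}(u).
\]
Since $d_{q_2}(u)\in Z\subseteq K$ and $t_{q_1}\in C_G(K)$, the conjugation is trivial, and as $Z$ is Abelian one obtains $d_{q_1q_2}(u)=d_{q_1}(u)\,d_{q_2}(u)$ on the nose. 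Thus the restricted cocycles satisfy $d_{q_1q_2}=d_{q_1}\cdot d_{q_2}$, which is exactly the product in the group of crossed homomorphisms; passing to cohomology classes shows that $\Xi$ is a homomorphism.

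The main obstacle is the second paragraph: recognizing that the otherwise messy factor-set bookkeeping (relating $\delta_{q_1q_2}$ to $\delta_{q_1}$, $\delta_{q_2}$ and the $2$-cocycle of the extension) can be bypassed entirely by lifting $q$ to $t_q\in C_G(K)$ and rewriting $d_q$ as the commutator $[t_q,\ol u]$. Once this reformulation and the admissibility of $t_{q_1}t_{q_2}$ as a representative are secured, the homomorphism property reduces to a one-line commutator computation.
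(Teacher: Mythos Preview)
Your proof is correct and takes a genuinely different route from the paper. The paper proceeds by direct computation: it writes $\theta_{q_1q_2}(x)=k_1k_2f(q_1,q_2)^{-1}\,x\,f(q_1,q_2)(k_1k_2)^{-1}$ using the $2$-cocycle $f$ of the extension, then expresses $\delta_{q_1q_2}(u)$ in terms of $\delta_{q_1}(u)$, $\delta_{q_2}(u)$ and $f$, and finally simplifies $d_{q_1q_2}(u)$ through a chain of seven equalities, repeatedly invoking the centrality of the $d_{q_i}(u)$ to cancel the $f$-terms. Your device of lifting $q$ to $t_q=k^{-1}\ol q\in C_G(K)$ and recognizing $d_q(u)=[t_q,\ol u]$ replaces all of this bookkeeping: once you observe that $t_{q_1}t_{q_2}$ is itself an admissible lift of $q_1q_2$ to $C_G(K)$, the factor set never appears, and multiplicativity is a one-line commutator identity. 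The paper's computation has the minor advantage of displaying explicitly how the $2$-cocycle terms cancel, but your argument is conceptually cleaner and explains \emph{why} $\Xi$ is a homomorphism: the assignment $q\mapsto t_q$, well defined modulo $Z$, is essentially a homomorphism $\ker\Phi\to C_G(K)/Z$, and $\Xi$ records the obstruction to lifting it to $C_G(K)$.
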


\noindent Under an additional hypothesis on $Q$, namely that $FC(Q)$ 
is finitely generated, one gives a more concise necessary and sufficient condition where condition (ii) is rephrased by mean of a homomorphism from $\ker \Phi$ to $H^1(C_Q(FC(Q)),Z(K))$.%
\vskip 0.2cm

\begin{prop}[In case $FC(Q)$ is finitely generated]\label{extfcqfg}
Under the same hypotheses as Theorem \ref{extension}, if moreover $FC(Q)$ is finitely generated, or more generally if $C_Q(FC(Q))$ has finite index in $Q$,a necessary and sufficient condition for $G$ to be icc is:
\begin{itemize}
\item[(i)] $FC_G(K)=\{1\}$, and
\item[(ii$'$)]  The homomorphism $\Xi:\ker\Phi\longrightarrow H^1(C_Q(FC(Q)),Z)$ is injective.
\end{itemize}
\end{prop}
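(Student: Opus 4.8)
The plan is to reduce Proposition~\ref{extfcqfg} to Theorem~\ref{extension} by showing that, under the finiteness hypothesis, condition~(ii) of the theorem and condition~(ii$'$) here are equivalent. Since both propositions share condition~(i), it suffices to prove the equivalence
\[
\bigl(\exists\,q\neq1\in\ker\Phi,\ [q]=0\text{ in }H^1(C(q),Z)\bigr)\ \Longleftrightarrow\ \ker\Xi\neq\{1\}.
\]
The homomorphism $\Xi$ of Proposition-Definition~\ref{defhomomorphism} sends $q$ to the class in $H^1(C_Q(FC(Q)),Z)$ of the \emph{same} cocycle $d_q$ that defines $[q]$, only restricted from $C(q)$ to the smaller subgroup $C_Q(FC(Q))$. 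So the content of the proposition is that, when $C_Q(FC(Q))$ has finite index in $Q$, the vanishing of $[q]$ in $H^1(C(q),Z)$ is detected by its restriction to $C_Q(FC(Q))$.

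First I would observe that for any $q\in\ker\Phi$ one has $C_Q(FC(Q))\subseteq C(q)$, because $q\in FC(Q)$ commutes with every element centralizing all of $FC(Q)$; thus the restriction map $\operatorname{res}:H^1(C(q),Z)\to H^1(C_Q(FC(Q)),Z)$ is defined, and by construction $\Xi(q)=\operatorname{res}([q])$. The easy direction is immediate: if $[q]=0$ then $\Xi(q)=\operatorname{res}(0)=0$, so a nontrivial $q$ with $[q]=0$ lies in $\ker\Xi$, giving the reverse implication ``(ii$'$) holds $\Rightarrow$ (ii) holds'' in contrapositive form. The substance is the other direction: I must show that if $\Xi(q)=0$ for some $q\neq1$ in $\ker\Phi$, then already $[q]=0$ in $H^1(C(q),Z)$.

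The key tool is the standard restriction--corestriction (transfer) argument for cohomology of a finite-index subgroup. Writing $C=C(q)$ and $C_0=C_Q(FC(Q))\cap C(q)=C_Q(FC(Q))$ (which sits inside $C(q)$ with finite index $n$, since it has finite index in all of $Q$), the composite
\[
H^1(C,Z)\xrightarrow{\ \operatorname{res}\ }H^1(C_0,Z)\xrightarrow{\ \operatorname{cor}\ }H^1(C,Z)
\]
is multiplication by $n=[C:C_0]$. Hence $\Xi(q)=\operatorname{res}([q])=0$ forces $n\cdot[q]=\operatorname{cor}(\operatorname{res}([q]))=0$, so $[q]$ is killed by $n$. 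To conclude $[q]=0$ I need the cohomology group $H^1(C(q),Z)$ to have no nontrivial $n$-torsion; the natural route is to note that $Z=Z(K)$ is a subgroup of $K$ on which the relevant module structure is via $C(q)$, and to argue either that $Z$ is torsion-free (reducing to the case where $K$ is infinite and $Z(N_u)\simeq\Z^m$ as in Proposition~\ref{FCGN}) or, more robustly, that $[q]$ being a homomorphism-image allows one to pass to a power of $q$ without changing vanishing. The cleanest formulation replaces ``$[q]=0$'' by ``$q\in\ker\Xi$'' directly via the injectivity statement, so I expect the main obstacle to be exactly this torsion issue: controlling $n$-torsion in $H^1(C(q),Z)$, which is where the finite-index hypothesis $[Q:C_Q(FC(Q))]<\infty$ must be used in tandem with the module structure on $Z$. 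If torsion cannot be excluded outright, the fallback is to show that $\ker\Xi$ and the set of $q$ with $[q]=0$ generate the same obstruction to icc, i.e. that a counterexample to (ii) already produces a nontrivial element of $\ker\Xi$ after multiplying by the index, using that $\ker\Xi$ is a subgroup and hence closed under the requisite powers.
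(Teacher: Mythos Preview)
Your easy direction is fine and correctly yields the implication ``(i)+(ii$'$) $\Rightarrow$ $G$ icc'' via Theorem~\ref{extension}. The gap is in the other direction. Restriction--corestriction gives only $n\cdot[q]=0$ in $H^1(C(q),Z)$ with $n=[C(q):C_Q(FC(Q))]$, and you have no control on $n$-torsion in that group: $Z=Z(K)$ need not be torsion-free, and none of your fallbacks closes the gap. Passing to a power does not help, since $[q^m]$ lives in $H^1(C(q^m),Z)$ with $C(q^m)\supsetneq C(q)$ in general; knowing that $[q^m]$ restricts to $m\cdot[q]=0$ on $C(q)$ again only yields torsion information after a further transfer, and you are back in the same loop. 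Your fallback ``a counterexample to (ii) already produces a nontrivial element of $\ker\Xi$'' has the directions reversed: that is the easy direction you already did; what is missing is producing, from $q\in\ker\Xi\setminus\{1\}$, either some $q'$ with $[q']=0$ or a direct witness that $G$ is not icc.

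The paper does not attempt to prove (ii) $\Leftrightarrow$ (ii$'$) at all. It simply reruns the proof of Theorem~\ref{extension} with $C_Q(FC(Q))$ substituted for $C_Q(q)$ throughout. The point is that the two places where finiteness of index is used are: in Lemma~\ref{lastlemma}, the constructed $\omega$ has $C_G(\omega)\supset\pi^{-1}(C_Q(q))$; and in Step~2, the case analysis compares $\pi(C_G(u))$ with $C_Q(q)$. Both arguments go through verbatim with $C_Q(FC(Q))$ in place of $C_Q(q)$, because by hypothesis $C_Q(FC(Q))$ has finite index in $Q$ (hence in each $C_Q(q)$) and is contained in every $C_Q(q)$. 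In particular, from $\Xi(q)=0$ the very computation in Lemma~\ref{lastlemma} produces $\omega\in G\setminus K$ whose centralizer contains $\pi^{-1}(C_Q(FC(Q)))$, so $G$ is not icc directly, with no detour through $[q]$ and no torsion issue.
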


\noindent
An example of the wide applicability of this formulation is given in  example \ref{nonperfect} of \S \ref{section:qabelien}. The next example shows that the hypothesis  $[Q:C_Q(FC(Q)]<\infty$ is necessary in Proposition \ref{extfcqfg}.
\vskip 0.2cm

\begin{example}
   Here is an example where $C_Q(FC(Q))$ has infinite index in $Q$ and $G$ is icc while condition (ii$'$) fails.
Consider the groups:
\begin{gather*}
A=\ <a_1,\ldots, a_n,\ldots\,|\, [a_i,a_j],\,\forall\, i,j\in\N^*>\ \simeq \Z^\w\\
Q=\ <A,t\,|\,\forall\,n\in\N^*,\, t^na_n\,t^{-n}=a_n>
\end{gather*}
and
\begin{gather*}
K=<\a,\b,\a_1,\b_1,\ldots,\a_n,\b_n,\ldots\,|\qquad\qquad\qquad\qquad\qquad\qquad\qquad\qquad\qquad\qquad\qquad\qquad\\
\forall\, i,j\in\N^*,\,[\a_i,\b_j]=[\a,\a_i]=[\a,\b_j]=[\b,\a_i]=[\b,\b_j]=1>\ \simeq F_2\times\bigoplus_{n\in \N^*}(\Z\oplus\Z)
\end{gather*}
Let $A_n$ be the Abelian subgroup of $K$ generated by $\a_n,\b_n$. Consider an anosov automorphism ({\it i.e.} with irrational eigenvals) $\varphi\in SL(2,\Z)$ of $\Z\oplus\Z$ and define the homomorphism $\theta:A\longrightarrow Aut(K)$ by: 
$$\forall\,x\in K,\ \theta(a_1)(x)=\up{x}{\a}\ \text{and}\ \forall\, n\in\N^*,
\theta(a_{n+1})(x)=\begin{cases}
\varphi(x) & \text{if $x\in A_{n}$}\\
x & \text{otherwise}
\end{cases}
$$
 Let $G_0=K\rtimes_\theta A$; one has $FC(K)=Z(K)=\bigoplus_{n\in\N^*}A_n$ and $G_0$ is not icc ($\a^{-1}\a_1^{G_0}$ is finite). Using Theorem \ref{extension} and Proposition \ref{extfcqfg} one sees  that 
the homomorphism $\Xi:\ker\Phi\longrightarrow H^1(A,Z(K))$ is non-injective (here $\ker\Phi$ is cyclic generated by $a_1$ and $\Xi(\ker\Phi)=\{0\}$).\smallskip\\
Extend $\theta$ to $\vartheta:Q\longrightarrow Aut(K)$ by setting $\vartheta(t)(\a)=\a\a_1$ and $\vartheta(t)(x)=x$ for $x=\b,\a_1,\b_1,\ldots,\a_n,\b_n$. 
 Define $G=K\rtimes_{\vartheta} Q$;  $G$ is icc ($\vartheta(Q)(\a^n)$ is infinite); nevertheless, $\Xi:\ker\Phi\longrightarrow H^1(C_Q(FC(Q),Z(K))$ is non-injective (here again $\ker\Phi$ is generated by $a_1$ and $C_Q(FC(Q))=A$).
\end{example}

\subsection{Proof of the results}\label{souspartie:extension,preuve}
%
\subsubsection{Proof of Proposition-Definition \ref{H1ext}}\label{defH1}
In the following, we consider elements $q\in \ker \Theta$, $k\in K$ such that $\theta_q(x)=k\,x\,k^{-1}$ for all $x\in K$, and for all $u\in C(q)$, $\delta_q(u)$ in $K$ defined by:
$$\ol{u}\,\ol{q}\,\ol{u}^{-1}=\ol{q}\,\delta_q(u)\quad \text{in $G$}~.$$

\begin{lem}\label{cocyclecentre}
For all $u\in C(q)$,  $\delta_q(u)^{-1}k^{-1}\,\up{k}{\ol{u}}$ lies in the center $Z$ of $K$.
\end{lem}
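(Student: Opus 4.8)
The plan is to recognise the displayed element as one that acts trivially on $K$ by conjugation, and therefore lies in the center $Z$. The main tool is the conjugation homomorphism $c:G\longrightarrow Aut(K)$, $g\longmapsto c_g$ with $c_g(x)=\up{x}{g}=gxg^{-1}$, which is a genuine group homomorphism (i.e.\ $c_{gh}=c_g\circ c_h$) precisely because $K$ is normal in $G$. First I would observe that the three factors $\delta_q(u)^{-1}$, $k^{-1}$ and $\up{k}{\ol u}=\ol u\,k\,\ol u^{-1}$ all lie in $K$, so the candidate $z:=\delta_q(u)^{-1}k^{-1}\,\up{k}{\ol u}$ lies in $K$; consequently it suffices to prove $c_z=\mathrm{id}_K$, for then $z$ commutes with every element of $K$ and hence $z\in Z$.

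Next I would record the two relations that encode the hypotheses. Since $q\in\ker\Theta$ and $k$ is chosen with $\theta_q(x)=kxk^{-1}$ for all $x\in K$, one has the identity of automorphisms $c_{\ol q}=c_k$. Since $u\in C(q)$, the element $\delta_q(u)$ is defined by $\ol u\,\ol q\,\ol u^{-1}=\ol q\,\delta_q(u)$ in $G$; applying $c$ and using that it is a homomorphism gives $c_{\ol u}\,c_{\ol q}\,c_{\ol u}^{-1}=c_{\ol q}\,c_{\delta_q(u)}$.

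Then I would compute $c_{\ol u}\,c_{\ol q}\,c_{\ol u}^{-1}$ in two ways. On one hand, replacing $c_{\ol q}$ by $c_k$ and using the homomorphism property, $c_{\ol u}\,c_k\,c_{\ol u}^{-1}=c_{\ol u k\ol u^{-1}}=c_{\up{k}{\ol u}}$. On the other hand it equals $c_{\ol q}\,c_{\delta_q(u)}=c_k\,c_{\delta_q(u)}=c_{k\delta_q(u)}$. Equating the two expressions yields $c_{\up{k}{\ol u}}=c_{k\delta_q(u)}$, whence $c_{(k\delta_q(u))^{-1}\,\up{k}{\ol u}}=\mathrm{id}_K$, that is $c_{\delta_q(u)^{-1}k^{-1}\,\up{k}{\ol u}}=\mathrm{id}_K$. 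This is exactly $c_z=\mathrm{id}_K$, so $z\in Z$ as claimed.

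The computation is short and the only points demanding care are bookkeeping ones: keeping the composition order in $Aut(K)$ straight so that the product comes out in the precise arrangement $\delta_q(u)^{-1}k^{-1}\,\up{k}{\ol u}$ rather than a cyclically permuted variant, and checking at the outset that $z$ genuinely lies in $K$ (so that acting trivially by conjugation really does force centrality in $K$). No deeper obstacle is expected, since everything reduces to the fact that $g\mapsto c_g$ is a homomorphism together with the two defining relations for $k$ and for $\delta_q(u)$.
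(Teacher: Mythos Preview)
Your proof is correct and is essentially the same argument as the paper's: both apply the conjugation homomorphism $G\to Aut(K)$ to the defining relation $\ol u\,\ol q\,\ol u^{-1}=\ol q\,\delta_q(u)$, substitute $c_{\ol q}=c_k$, and conclude that the displayed element acts trivially on $K$ by conjugation, hence lies in $Z$. The only difference is notational---you package everything with the single map $c$, whereas the paper writes out $\theta_u\circ\theta_q\circ\theta_u^{-1}(x)=\theta_q\circ\theta_{\delta_q(u)}(x)$ explicitly on elements $x\in K$---but the content is identical.
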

\begin{proof}
Since $u\in C(q)$, $\ol{u}\,\ol{q}\,\ol{u}^{-1}=\ol{q}\,\delta_q(u)$  one has $\theta_u\circ\theta_q\circ\theta_u^{-1}=\theta_q\circ\theta_{\delta_q(u)}$ in $Aut(K)$. Hence $\forall\,x\in K$,
$$
\begin{array}{cc}
&\theta_u\circ\theta_q\circ\theta_u^{-1}(x)=\theta_q\circ\theta_{\delta_q(u)}(x)\\
\iff\quad &\theta_u(k\,\theta_u^{-1}(x)\,k^{-1})=k\,\delta_q(u)\,x\,\delta_q(u)^{-1}\,k^{-1}\\
\iff\quad &\theta_u(k)\,x\,\theta_u(k)^{-1}=k\,\delta_q(u)\,x\,\delta_q(u)^{-1}\,k^{-1}
\end{array}
$$
and it follows that $\delta_q(u)^{-1}k^{-1}\,\up{k}{\ol{u}}=\delta_q(u)^{-1}k^{-1}\theta_u(k)$ commutes with all $x\in K$.
\end{proof}

Given $q\in \ker\Theta$ and $k\in K$ as above, define the map:
$$\begin{array}{cccl}d_q: 
&C(q)&\longrightarrow &Z\\
&u&\longrightarrow & d_q(u):=\delta_q(u)^{-1}k^{-1}\,\up{k}{\ol{u}}~.
\end{array}
$$
\noindent
Note that {\it a-priori} $d_q$  does  depend  on both the section $s$ and  the choice of an element $k\in K$ such that $\theta_q(x)=\up{x}{k}$. 

\begin{lem}
The map $d_q:C(q)\longrightarrow Z$ is a 1-cocycle.
\end{lem}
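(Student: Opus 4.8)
The plan is to reduce the cocycle identity to a single clean conjugation relation inside $G$, thereby avoiding any direct manipulation of how the auxiliary elements $\delta_q$ compose. The device I would introduce is the element $z_q:=k^{-1}\ol{q}\in G$. Since $q\in\ker\Theta$ gives $\ol{q}\,x\,\ol{q}^{-1}=k\,x\,k^{-1}$ for all $x\in K$, a one-line computation shows that $z_q$ centralizes $K$: indeed $z_q\,x\,z_q^{-1}=k^{-1}\ol{q}\,x\,\ol{q}^{-1}k=k^{-1}(kxk^{-1})k=x$. This centralizing property is the structural fact I want to exploit throughout. Note also $\proj(z_q)=q$.

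First I would record how the section conjugates $z_q$. For $u\in C(q)$, using $\ol{u}\,\ol{q}\,\ol{u}^{-1}=\ol{q}\,\delta_q(u)$, substituting $\ol{q}=k\,z_q$, and commuting $z_q$ past $\delta_q(u)\in K$ (legitimate because $z_q$ centralizes $K$), I expect to obtain the compact formula
$$\ol{u}\,z_q\,\ol{u}^{-1}=d_q(u)^{-1}\,z_q,\qquad\text{equivalently}\qquad d_q(u)=[z_q,\ol{u}].$$
This incidentally reproves Lemma \ref{cocyclecentre} that $d_q(u)\in Z$: the commutator $[z_q,\ol{u}]$ is a product of $z_q$ and $\ol{u}z_q^{-1}\ol{u}^{-1}$, both of which centralize $K$ (the latter because $\ol{u}$ normalizes $K$ and $z_q$ centralizes it), so $d_q(u)$ centralizes $K$; and $\proj(d_q(u))=[q,u]=1$ since $u\in C(q)$, so $d_q(u)\in K$. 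Hence $d_q(u)\in Z$, and all the data of the map $d_q$ is now packaged into the action of the section on the single element $z_q$.

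With this relation in hand, the cocycle identity drops out by computing $\ol{u}\,\ol{v}\,z_q\,(\ol{u}\,\ol{v})^{-1}$ in two ways. Applying the boxed relation twice gives ${}^{\ol{u}}(d_q(v))^{-1}\,d_q(u)^{-1}\,z_q$. On the other hand, writing $\ol{u}\,\ol{v}=\mu\,\ol{uv}$ for the factor $\mu=\mu(u,v)\in K$ measuring the failure of $s$ to be a homomorphism, I would conjugate the relation $\ol{uv}\,z_q\,\ol{uv}^{-1}=d_q(uv)^{-1}z_q$ by $\mu$; this leaves it unchanged, since $\mu$ lies in $K$ (so commutes with $z_q$) and acts trivially on the central subgroup $Z$ (so fixes $d_q(uv)$). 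Comparing the two expressions yields $d_q(uv)^{-1}={}^{\ol{u}}(d_q(v))^{-1}d_q(u)^{-1}$, i.e. the crossed-homomorphism relation $d_q(uv)=d_q(u)\,{}^{\ol{u}}d_q(v)$, which exhibits $d_q$ as a $1$-cocycle $C(q)\to Z$.

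The step I expect to be the main obstacle is precisely the bookkeeping around the factor $\mu(u,v)$: because $s$ is only a section, $\ol{uv}\neq\ol{u}\,\ol{v}$ in general, and a priori this discrepancy could contaminate $d_q(uv)$. The argument goes through only because $Z=Z(K)$ is simultaneously characteristic in $K$ (hence $\theta_u$-invariant, so that ${}^{\ol{u}}d_q(v)\in Z$ and the right-hand side genuinely lands in $Z$) and centralized by all of $K$ (hence by $\mu$), so the factor set disappears exactly on the values that matter. As a final consistency check I would verify that the action ${}^{\ol{u}}(-)$ appearing in the identity is the same as the $C(q)$-action on $Z$ used to define $H^1(C(q),Z)$; this again follows from $K$ acting trivially on $Z$, so that the conjugation action of $G$ on $Z$ factors through $Q$ and depends only on $u$.
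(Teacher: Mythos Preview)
Your proof is correct and takes a genuinely different, more conceptual route than the paper's. The paper proceeds by brute-force computation: it introduces the $2$-cocycle $f(u,v)$ of the extension, expands $\delta_q(uv)$ through a long chain of conjugation identities, then computes $d_q(uv)$ and $d_q(u)\,{}^{\ol{u}}d_q(v)$ separately and matches them, repeatedly invoking that $d_q$-values lie in $Z$ to cancel stray conjugates. Your approach instead packages everything into the single element $z_q=k^{-1}\ol{q}\in C_G(K)$: once the identity $\ol{u}\,z_q\,\ol{u}^{-1}=d_q(u)^{-1}z_q$ is established, the cocycle relation follows from associativity of conjugation, with the factor $\mu\in K$ disappearing precisely because $K$ centralizes both $z_q$ and $Z$. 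This buys a much shorter and more transparent argument, and gives a structural interpretation---$d_q$ measures the failure of the canonical lift $z_q$ of $q$ to $C_G(K)$ to be fixed by conjugation by the section---whereas the paper's calculation, though self-contained, is opaque. As a bonus, your commutator description $d_q(u)=[z_q,\ol{u}]$ also yields Lemma~\ref{cocyclecentre} for free.
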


\begin{proof}
We need to show that $\forall\, u,v\in C(q),\, d_q(uv)=d_q(u)\,\up{d_q(v)}{\ol{u}}$. We denote by $f:Q\times Q\longrightarrow K$ the 2-cocycle defined by the extension and by the section $s$, such that: 
$$
\forall\,x,y\in Q,\quad\ol{x}\,\ol{y}=\ol{xy}\,f(x,y)~.
$$
 Let $u,v\in C(q)$.
In order to compute $d_q(uv)$ we need to compute first $\delta_q(uv)$ and $\theta_{uv}$. Since $\ol{uv}=\ol{u}\,\ol{v}\,f(u,v)^{-1}$, one has $\theta_{uv}=\theta_u\circ\theta_v\circ \theta_{f(u,v)^{-1}}$.\\
{\it Computation of $\delta_{q}(uv)$.} One has $\ol{uv}\,\ol{q}\,\ol{uv}^{-1}=\ol{q}\,\delta_q(uv)$.
\[
\begin{array}{rl}
\ol{uv}\,\ol{q}\,\ol{uv}^{-1}
&=\ol{u}\,\ol{v}\,f(u,v)^{-1}\ol{q}\,f(u,v)\,\ol{v}^{-1}\ol{u}^{-1}\\
&= \ol{u}\,\up{f(u,v)^{-1}}{\ol{v}}\,\ol{v}\,\ol{q}\,\ol{v}^{-1}\,\up{f(u,v)}{\ol{v}}\,\ol{u}^{-1}\\
&= \up{f(u,v)^{-1}}{\ol{u}\,\ol{v}}\,\ol{u}\,\ol{q}\,\delta_q(v)\,\ol{u}^{-1}\,\up{f(u,v)}{\ol{u}\,\ol{v}}\\
&=  \up{f(u,v)^{-1}}{\ol{u}\,\ol{v}}\,\ol{u}\,\ol{q}\,\ol{u}^{-1}\,\up{\delta_q(v)}{\ol{u}}\,\up{f(u,v)}{\ol{u}\,\ol{v}}\\
&=  \up{f(u,v)^{-1}}{\ol{u}\,\ol{v}}\,\ol{q}\,\delta_q(u)\,\up{\delta_q(v)}{\ol{u}}\,\up{f(u,v)}{\ol{u}\,\ol{v}}\\
&=\ol{q}\,\up{f(u,v)^{-1}}{\ol{q}^{-1}\ol{u}\,\ol{v}}\,\delta_q(u)\,\up{\delta_q(v)}{\ol{u}}\,\up{f(u,v)}{\ol{u}\,\ol{v}}
\end{array}
\]
Therefore  $\delta_q(uv)=\up{f(u,v)^{-1}}{\ol{q}^{-1}\ol{u}\,\ol{v}}\,\delta_q(u)\,\up{\delta_q(v)}{\ol{u}}\,\up{f(u,v)}{\ol{u}\,\ol{v}}$.\\
 By applying the definition, we obtain on the one hand:
\[
\begin{array}{rl}
d_q(uv)&=\delta_q(uv)^{-1}k^{-1}\theta_{uv}(k)\\
&=\up{f(u,v)^{-1}}{\ol{u}\,\ol{v}}\,\up{\delta_q(v)^{-1}}{\ol{u}}\,\delta_q(u)^{-1}\,\up{f(u,v)}{\ol{q}^{-1}\ol{u}\,\ol{v}}\,
k^{-1}\,\up{(f(u,v)^{-1}k\,f(u,v))}{\ol{u}\,\ol{v}}\\
&=\up{f(u,v)^{-1}}{\ol{u}\,\ol{v}}\,\up{\delta_q(v)^{-1}}{\ol{u}}\,\delta_q(u)^{-1}\,\up{f(u,v)}{\ol{q}^{-1}\ol{u}\,\ol{v}}\,
\up{f(u,v)^{-1}}{\ol{q}^{-1}\ol{u}\,\ol{v}}\,
k^{-1}\,\up{(k\,f(u,v))}{\ol{u}\,\ol{v}}\\
&= \up{f(u,v)^{-1}}{\ol{u}\,\ol{v}}\,\up{\delta_q(v)^{-1}}{\ol{u}}\,\delta_q(u)^{-1}
k^{-1}\,\up{(k\,f(u,v))}{\ol{u}\,\ol{v}}\\
&= \up{f(u,v)^{-1}}{\ol{u}\,\ol{v}}\,\up{\delta_q(v)^{-1}}{\ol{u}}\,\delta_q(u)^{-1}
k^{-1}\,\up{k}{\ol{u}\,\ol{v}}\,\up{f(u,v)}{\ol{u}\,\ol{v}}\\
\end{array}
\]
and since $d_q(uv)$ lies in the center $Z$ of $K$ (lemma \ref{cocyclecentre}), $f(u,v)$, $\delta_q(v)$ both lie in $K$, and $K$ is preserved both by $\theta_u$, $\theta_v$:
$$
\begin{array}{rl}
d_q(uv)&=\up{\delta_q(v)^{-1}}{\ol{u}}\,\delta_q(u)^{-1}
k^{-1}\,\up{k}{\ol{u}\,\ol{v}}\\
&= \delta_q(u)^{-1}
k^{-1}\,\up{k}{\ol{u}\,\ol{v}}\,\up{\delta_q(v)^{-1}}{\ol{u}}~.
\end{array}
$$
On the other hand, since $\delta_q(v)\in K$, $d_q(v)\in Z$ and $Z$ is a characteristic subgroup of $K$:
\[
\begin{array}{rl}
\up{d_q(v)}{\ol{u}} &= \up{(\delta_q(v)^{-1}k^{-1}\,\up{k}{\ol{v}})}{\ol{u}}\\
&=\up{(k^{-1}\,\up{k}{\ol{v}}\,\delta_q(v)^{-1})}{\ol{u}}
\end{array}
\]
so that:
$$\begin{array}{rl}
d_q(u)\,\up{d_q(v)}{\ol{u}}&=\delta_q(u)^{-1}k^{-1}\,\up{k}{\ol{u}}\,\up{k^{-1}}{\ol{u}}\,\up{k}{\ol{u}\,\ol{v}}\,\up{\delta_q(v)^{-1}}{\ol{u}}\\
&=\delta_q(u)^{-1}k^{-1}\,\up{k}{\ol{u}\,\ol{v}}\,\up{\delta_q(v)^{-1}}{\ol{u}}\\
&=d_q(uv)
\end{array}
$$
which proves that $d_q:C(k)\longrightarrow Z$ is a 1-cocycle. 
\end{proof}

\begin{lem}
The cocycle $d_q:C(q)\longrightarrow Z$ defines an element $[q]$ in $H^1(C(q),Z))$ which only depends on  $q\in \ker \Theta$ and on the equivalence class of the extension $G$ of $K$ by $Q$.
\end{lem}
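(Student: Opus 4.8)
The class $[q]$ a priori depends on three pieces of data: the section $s$ (hence the lifts $\ol{u}$ and the elements $\delta_q(u)$), the element $k\in K$ with $\theta_q(x)=kxk^{-1}$, and the particular group $G$ realising the extension. The plan is to replace the explicit cocycle $d_q$ by a manifestly intrinsic one, attached to a $Z$-torsor sitting inside the centraliser of $K$, so that all three independences become visible at once; proving the section-independence directly from the formula for $d_q$ would instead force an unpleasant computation tracking how $\delta_q$, $k$ and $\theta_u$ all transform under a change $\ol{q}\mapsto\ol{q}c$.

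Let $C_G(K)$ be the centraliser of $K$ in $G$; it is normal and $C_G(K)\cap K=Z$. First I would note that for $q\in\ker\Theta$ the element $g_q:=k^{-1}\ol{q}$ lies in $C_G(K)$, since $\ol{q}\,x\,\ol{q}^{-1}=kxk^{-1}$ for all $x\in K$ forces $g_q\,x\,g_q^{-1}=x$; moreover $\proj(g_q)=q$. A short check shows that the admissible modifications of the data move $g_q$ only within the single coset $\Gamma_q:=\proj^{-1}(q)\cap C_G(K)=Z\,g_q$: replacing $k$ by $kz$ ($z\in Z$) gives $z^{-1}g_q$, while a change of section $\ol{q}\mapsto\ol{q}c$ with the linked choice $k\mapsto kc$ ($c\in K$) leaves $g_q$ fixed, because $c^{-1}g_q c=g_q$ as $g_q$ centralises $K$. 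Thus $\Gamma_q$ is an intrinsic $Z$-torsor determined by $q$ alone.

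Next I would let $C(q)$ act on $\Gamma_q$ by conjugation: for $u\in C(q)$ with preimage $\ol{u}$, the element $\ol{u}\,g_q\,\ol{u}^{-1}\in C_G(K)$ projects to $uqu^{-1}=q$, hence $\ol{u}\,g_q\,\ol{u}^{-1}=\zeta(u)\,g_q$ for a unique $\zeta(u)\in Z$. The step I expect to carry the weight is the verification that this action is independent of the chosen lift: replacing $\ol{u}$ by $\ol{u}c'$ with $c'\in K$ does not change $\ol{u}\,g_q\,\ol{u}^{-1}$, once more because $g_q$ centralises $K$; the same observation shows that $z\mapsto\up{z}{\ol{u}}$ is lift-independent on $Z$, so the $C(q)$-module structure on $Z$, and thus $H^1(C(q),Z)$, are themselves intrinsic. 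A brief manipulation using $\ol{u}\,\ol{q}\,\ol{u}^{-1}=\ol{q}\,\delta_q(u)$ together with the fact that $\theta_q$ (being conjugation by $k$) fixes $Z$ pointwise then yields the bridge $d_q(u)=\zeta(u)^{-1}$ for all $u\in C(q)$, so that $[q]=[\zeta]^{-1}$ in $H^1(C(q),Z)$.

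Finally I would read off all three independences from this description. Since $\zeta$ is the cocycle of an intrinsic $C(q)$-action relative to the basepoint $g_q$, passing to another point $z_0g_q$ of $\Gamma_q$ alters it by the principal crossed homomorphism $u\mapsto z_0^{-1}\up{z_0}{\ol{u}}$; as every admissible choice of $s$ and $k$ keeps the basepoint in $\Gamma_q$, the class of $\zeta$, and hence $[q]$, is independent of $s$ and of $k$. For equivalence, if $\psi:G\to G'$ is an equivalence of extensions (the identity on $K$, inducing the identity on $Q$), then $\psi$ sends $C_G(K)$ onto $C_{G'}(K)$, carries $\Gamma_q$ onto the corresponding torsor while fixing $Z$, and intertwines both the conjugation actions and the module structures; it therefore identifies the two cocycles $\zeta$, giving $[q]_G=[q]_{G'}$. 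The whole difficulty is thereby concentrated in the reformulation itself — the lift-independence of the action on $\Gamma_q$ and the identity $d_q=\zeta^{-1}$ — after which the conclusion is immediate.
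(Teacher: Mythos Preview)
Your argument is correct and takes a genuinely different route from the paper's. The paper proceeds by brute-force computation: it first replaces $k$ by $kz$ and checks that $d_q$ changes by the coboundary $u\mapsto z^{-1}\,\up{z}{\ol{u}}$, then in two separate sub-steps replaces $\ol{q}$ by $\ol{q}\widehat{q}$ (with the linked change $k\mapsto k\widehat{q}$) and $\ol{u}$ by $\ol{u}\widehat{u}$, verifying in each case that $d'_q(u)=d_q(u)$ identically. The equivalence-of-extensions statement is not argued separately.

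Your reformulation via the torsor $\Gamma_q=\proj^{-1}(q)\cap C_G(K)$ packages all three independences at once: the only genuine freedom is the basepoint $g_q\in\Gamma_q$, which accounts for the coboundary ambiguity, while section changes and extension equivalences act trivially because $g_q$ centralises $K$ and $\psi$ fixes $K$ pointwise. This is cleaner and, notably, anticipates the element $\w=\ol{q}\,z\,k^{-1}$ that the paper only introduces later in Lemma~\ref{lastlemma}; your $g_q$ is exactly that element (up to the $Z$-shift), so you are effectively discovering early the structural reason why the construction works. The paper's approach is more elementary in that it avoids the torsor language, but at the cost of the long $\delta_q$-bookkeeping you explicitly set out to avoid. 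One small point you leave implicit is that $\zeta$ is a crossed homomorphism; this is the standard fact that a group action on a $Z$-torsor compatible with the module action yields a $1$-cocycle from any basepoint, and the compatibility here is immediate from conjugation, so no gap.
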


\begin{proof}
The cocycle $d_q$ defines an element of $H^1(C(q),Z)$, we need to prove that it depends neither on $k\in kZ$ nor on the section $s$.

Let $k_1\in kZ$, say $k_1=kz$ for some $z\in Z$, which, as above, defines the 1-cocycle $d'_q:C(q)\longrightarrow Z$ by:
\[
\begin{array}{rl}
d'_q(u)&=\delta_q(u)^{-1}k_1^{-1}\,\up{k_1}{\ol{u}}\\
&=\delta_q(u)^{-1}z^{-1}k^{-1}\,\up{k}{\ol{u}}\,\up{z}{\ol{u}}\\
&=\delta_q(u)^{-1}k^{-1}\,\up{k}{\ol{u}}\,z^{-1}\,\up{z}{\ol{u}}\\
&=d_q(u)\,z^{-1}\,\up{z}{\ol{u}}
\end{array}
\]
$d_q$, $d'_q$ differ by the 1-coboundary : $z\longrightarrow z^{-1}\,\up{z}{\ol{u}}$, so that they define the same element of $H^1(C(q),Z)$.\smallskip

Now we show that $[q]$ does not depend on the section $s$. We proceed in two steps. First we change $s(q)=\ol{q}$ into $s(q)=\widetilde{q}=\ol{q}\,\widehat{q}$ for some $\widehat{q}\in K$. Consider $k'=k\,\widehat{q}$; since for all $x\in K$, $\up{x}{\widetilde{q}}=\up{x}{k'}$ it defines a 1-cocycle denoted ${d}'_q$ whose class in $H^1(C(q),Z)$ does not depend on the choice of $\widehat{q}$. Obviously, ${d}'_q(q)=d_q(q)$.
 Given $u\in Z(q)$, $\delta_q(u)$  changes into $\delta'_q(u)=\widehat{q}^{-1}\delta_q(u)\,\theta_u(\widehat{q})$, for:
\[
\ol{u}\,\widetilde{q}\,\ol{u}^{-1}=\ol{u}\,\ol{q}\,\widehat{q}\,\ol{u}^{-1}=\ol{u}\,\ol{q}\,\ol{u}^{-1}\,\up{\widehat{q}}{\ol{u}\,}=\ol{q}\,\delta_q(u)\,\up{\widehat{q}}{\ol{u}\,}=\widetilde{q}\,\widehat{q\,}^{-1}\,\delta_q(u)\,\up{\widehat{q}}{\ol{u}\,}
\]
Hence:
\[
d'_q(u)=\delta'_q(u)^{-1}{k'}^{-1}\theta_u(k')=\up{\widehat{q}^{-1}}{\ol{u}\,}\,\delta_q(u)^{-1}\widehat{q}\,\widehat{q}^{-1}k^{-1}\,\up{k}{\ol{u}}\,\up{\widehat{q}}{\ol{u}\,}=\up{\widehat{q}^{-1}}{\ol{u}\,}\,\delta_q(u)^{-1}k^{-1}\,\up{k}{\ol{u}}\,\up{\widehat{q}}{\ol{u}\,}
\]
and since $d'_q(u)\in Z$ and $\up{\widehat{q}}{\ol{u}\,}\in K$, one obtains $d'_q(u)=\delta_q(u)^{-1}k^{-1}\,\up{k}{\ol{u}\,}=d_q(u)$, which achieves the first step.

Now we change for $u\in Z(q)$, $s(u)=\ol{u}$ into $s(u)=\widetilde{u}=\ol{u}\,\widehat{u}$ for some $\widehat{u}\in K$.
It changes $\delta_q(u)$ into $\delta'_q(u)=\delta_q(u)\,\up{(\up{\widehat{u}}{\ol{q}^{-1}}\widehat{u}^{-1})}{\ol{u}}$ for:
$$
\widetilde{u}\,\ol{q}\,\widetilde{u}^{-1}=\ol{u}\,\widehat{u}\,\ol{q}\,\widehat{u}^{-1}\,\ol{u}^{-1}=\ol{u}\,\ol{q}\,\up{\widehat{u}}{\ol{q}^{-1}}\,\widehat{u}^{-1}\,\ol{u}^{-1}=\ol{u}\,\ol{q}\,\ol{u}^{-1}\,\up{(\up{\widehat{u}}{\ol{q}^{-1}}\widehat{u}^{-1})}{\ol{u}}=\ol{q}\,\delta_q(u)\,\up{(\up{\widehat{u}}{\ol{q}^{-1}}\widehat{u}^{-1})}{\ol{u}}~.
$$
Moreover, $\up{k}{\widetilde{u}\,}=\up{k}{\ol{u}\,}\up{(\up{\widehat{u}}{\ol{q}^{-1}}\widehat{u}^{-1})}{\ol{u}}$ for:
$$
\up{k}{\widetilde{u}}=\ol{u}\,\widehat{u}\,k\,\widehat{u}^{-1}\,\ol{u}^{-1}=\ol{u}\,k\,\up{\widehat{u}}{k^{-1}}\,\widehat{u}^{-1}\ol{u}^{-1}=\ol{u}\,k\,\ol{u}^{-1}\,\up{(\up{\widehat{u}}{{k}^{-1}}\widehat{u}^{-1})}{\ol{u}\,}=\up{k}{\ol{u}\,}\,\up{(\up{\widehat{u}}{\ol{q}^{-1}}\widehat{u}^{-1})}{\ol{u}}
$$
and we obtain:
$$
\begin{array}{rl}
d'_q(u)&=\delta'_q(u)^{-1}{k}^{-1}\,\up{k}{\widetilde{u}\,}\\
&=(\up{(\up{\widehat{u}}{\ol{q}^{-1}}\widehat{u}^{-1})}{\ol{u}\,})^{-1}\delta_q(u)^{-1}{k}^{-1}\,\up{k}{\ol{u}\,}\up{(\up{\widehat{u}}{\ol{q}^{-1}}\widehat{u}^{-1})}{\ol{u}\,}\\
&=\up{(\up{\widehat{u}}{\ol{q}^{-1}}\widehat{u}^{-1}}{\ol{u}\,})^{-1}\,d_q(u)\,\up{(\up{\widehat{u}}{\ol{q}^{-1}}\widehat{u}^{-1})}{\ol{u}\,}\\
&=d_q(u)
\end{array}
$$
since $d_q(u)\in Z$ and $\up{(\up{\widehat{u}}{\ol{q}^{-1}}\widehat{u}^{-1})}{\ol{u}\,}\in K$, this achieves the second step.
\end{proof}

\subsubsection{Proof of Theorem \ref{extension}}\label{proofextension}
We already know that condition (i) is necessary (see \S \ref{section1}). We now proceed in two steps.\medskip\\
{\it Step 1. $G$ icc $\implies$ condition {\rm (ii)}.}\\
Suppose on the contrary that condition (ii) fails: there exists $q \in \ker\Phi$ such that $q\not=1$ and $[q]=0$ in $H^1(C_Q(q),Z(K))$.
According to the following Lemma \ref{lastlemma} there exists $\w\in G\setminus K$ such that $C_G(\w)$ contains the preimage $\pi^{-1}(C_Q(q))$. But since $q\in FC(Q)$, $C_Q(q)$ has finite index in $Q$ so that, $C_G(\w)$ also has finite index in $G$. Therefore $G$ is not icc.
\hfill$\square$\\

\begin{lem}\label{lastlemma}
If there exists $q \in \ker\Phi\setminus\{1\}$ such that $[q]=0$ in $H^1(C_Q(q),Z(K))$, then there exists $\w\in G\setminus K$ such that $C_G(\w)\supset \pi^{-1}(C_Q(q))$.
\end{lem}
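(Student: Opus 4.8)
Fix $q\in\ker\Phi\setminus\{1\}$ with $[q]=0$ in $H^1(C_Q(q),Z)$. The goal is to produce an explicit element $\w\in G\setminus K$ whose centralizer contains $\proj^{-1}(C_Q(q))$; the natural candidate is $\w=\ol{q}\,z$ for a suitable $z\in Z$, or more simply $\w=k^{-1}\ol{q}$, since the vanishing of $[q]$ should be exactly what is needed to correct $\ol{q}$ into a genuinely central-type element. The plan is to recall that $q\in\ker\Theta$, so there is $k\in K$ with $\theta_q(x)=kxk^{-1}$ for all $x\in K$, and that $[q]=0$ means the cocycle $d_q$ is principal: there exists $z_0\in Z$ with $d_q(u)=z_0^{-1}\,\up{z_0}{\ol{u}}$ for all $u\in C_Q(q)$.

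\textbf{Choosing $\w$ and reducing to two commutation checks.}
I would set $\w:=k^{-1}z_0\,\ol{q}$ (absorbing the principal-coboundary witness $z_0\in Z$ into the element), which lies in $G\setminus K$ since $\proj(\w)=q\neq1$. To show $\proj^{-1}(C_Q(q))\subset C_G(\w)$, note that $\proj^{-1}(C_Q(q))$ is generated by $K$ together with any set of lifts $\{\ol{u}:u\in C_Q(q)\}$; it therefore suffices to check that $\w$ commutes with every $x\in K$ and with every such lift $\ol{u}$. The first check, $\w x=x\w$ for all $x\in K$, should follow directly from the defining property $\theta_q(x)=kxk^{-1}$ once one rewrites $\w x\w^{-1}$ and cancels the conjugation by $\ol{q}$ against $\theta_q=\mathrm{Ad}(k)$; here $z_0\in Z$ drops out because it is central in $K$.

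\textbf{The main computation: commuting with the lifts.}
The heart of the proof is the second check, $\w\,\ol{u}=\ol{u}\,\w$ for all $u\in C_Q(q)$, and this is where the hypothesis $[q]=0$ must be used in full. I would expand $\w^{-1}\ol{u}^{-1}\w\,\ol{u}$ (an element of $K$, since it projects to $q^{-1}u^{-1}qu=1$ in $Q$) using the definitions $\ol{u}\,\ol{q}\,\ol{u}^{-1}=\ol{q}\,\delta_q(u)$ and $\up{k}{\ol{u}}=\theta_u(k)$, and reorganize the resulting word into the expression $d_q(u)^{-1}\cdot(z_0^{-1}\,\up{z_0}{\ol{u}})$ up to central terms; the coboundary relation $d_q(u)=z_0^{-1}\,\up{z_0}{\ol{u}}$ then forces this commutator to be trivial. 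This is the step I expect to be the main obstacle, both because it requires bookkeeping identical in spirit to the 2-cocycle manipulations already carried out in the proof of Proposition-Definition \ref{H1ext}, and because one must be careful that the chosen witness $z_0$ and the auxiliary element $k$ interact correctly; the fact that $d_q(u)$ lands in the center $Z$ (Lemma \ref{cocyclecentre}) is what allows all the central factors to be moved freely and cancelled. Once both commutation relations hold, $C_G(\w)\supset\proj^{-1}(C_Q(q))$ follows, completing the lemma.
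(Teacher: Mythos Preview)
Your proposal is correct and follows essentially the same route as the paper: the paper chooses $\w=\ol{q}\,z\,k^{-1}$, which is the very same element as your $\w=k^{-1}z_0\,\ol{q}$ (since $\theta_q(k^{-1})=k^{-1}$ and $\theta_q(z)=z$ imply that $\ol{q}$, $k^{-1}$ and $z$ pairwise commute), and then verifies the same two commutation relations you outline. Your identification of the key input---that the principal-coboundary witness $z_0$ is precisely what corrects $k^{-1}\ol{q}$ so as to commute with the lifts $\ol{u}$, using $d_q(u)\in Z$ throughout---matches the paper's computation exactly.
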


\noindent{\it Proof of Lemma \ref{lastlemma}.}
 We fix a section $s:Q\longrightarrow G$ and use the same notations as in \S \ref{definitionH1}. Let $k\in K$ such that $\theta_q(x)=k\,x\,k^{-1}$ for all $x\in K$. Since $[q]=0$, there exists $z\in Z(K)$ such that for any $u\in C_Q(q)$, 
$$
d_q(u):=\delta_q(u)^{-1}k^{-1}\,\up{k}{\ol{u}\,}=z^{-1}\,\up{z}{\ol{u}\,}~.
$$
 Let $\w=\ol{q}\,z\,k^{-1}\in G$; since $q\not=1$, $\w\in G\setminus K$. Its centralizer $C_G(\w)$ contains $K$; indeed for all $x$ in $K$,
$$ \begin{array}{rl}
\w\, x\,\w^{-1}&= \ol{q}\,z\,k^{-1} x\, k\, z^{-1}\,\ol{q}^{-1}\\
&= \ol{q}\,k^{-1} x\, k\, \ol{q}^{-1}\\
&= k\,k^{-1}x\,k\,k^{-1}\\
&=x
\end{array}
$$
Moreover 
  for any $u\in C_Q(q)$, $\ol{u}$ also lies in $C_G(\w)$, for:
$$
\begin{array}{rl}
\ol{u}\,\w\,\ol{u}^{-1}&=\ol{u}\,\ol{q}\,z\,k^{-1}\ol{u}^{-1}\\
	&=\ol{u}\,\ol{q}\,\ol{u}^{-1}\,\up{z}{\ol{u}\,}\,\up{k^{-1}}{\ol{u}\,}\\
	&=\ol{q}\,\delta_q(u)\,\up{z}{\ol{u}\,}\,\up{k^{-1}}{\ol{u}\,}\\
	&=\ol{q}\,k^{-1}\,\up{k}{\ol{u}\,}\,\up{z^{-1}}{\ol{u}\,}\,z\,\up{z}{\ol{u}\,}\,\up{k^{-1}}{\ol{u}\,}\\
	&=\ol{q}\,k^{-1}\,\up{k}{\ol{u}\,}\,\up{z^{-1}}{\ol{u}\,}\,\up{z}{\ol{u}\,}\,\up{k^{-1}}{\ol{u}\,}\,z\\
	&=\ol{q}\,k^{-1}z\\
	&=\ol{q}\,z\,k^{-1}\\
	&=\w
\end{array}
$$
hence $C_G(\w)$ contains  the preimage $\pi^{-1}(C_Q(q))$ of $C_Q(q)$ by $\pi:G\longrightarrow Q$.\hfill$\square$\\

\noindent
{\it Step 2. condition {\rm (i)} and {\rm (ii)} $\implies$ $G$ is icc.}\\
We suppose that $G$ is not icc and prove that either condition (i) or condition (ii) fails. Suppose there exists $u\not=1$ in $G$ with $\up{u}{G}$ finite. If $u\in K$, $u\in FC_G(K)$ and condition {\rm (i)} fails. So suppose that $u=\ol{q}\,k^{-1}$ for some $k\in K$ and $q\not=1$ in $Q$; $q$ necessarily lies in $FC(Q)$. Let $K_0=C_G(u)\cap K$; it has finite index in $K$, and $\forall\,x\in K_0$, $\theta_q(x)=k\,x\,k^{-1}$. If $K_0\not=K$, let $h\in K\setminus K_0$ and $\w=[u,h]\not=1$; then $\w$ lies in $K$ and $\up{\w}{G}$ is finite since $C_G(\w)\supset C_G(u)\cap h\,C_G(u)\,h^{-1}$ has finite index in $G$. Therefore when $K_0\not=K$ condition {\rm (i)} fails. 

Suppose in the following that $K=K_0$, {\it i.e.} $\theta_q$ is inner, $\forall\,x\in K$, $\theta_q(x)=k\,x\,k^{-1}$. Let $Q_0=\proj(C_G(u))$; $Q_0$ is included in $C_Q(q)$. If $C_Q(q)\setminus Q_0\not=\varnothing$, let $p\in C_Q(q)\setminus Q_0$; one has $w=[\ol{p},u]\not=1$ in $G$. Let's prove that $w$ lies in $Z(K)$:
$$
\begin{array}{rl}
[\ol{p},u]&=\ol{p}\,\ol{q}\,k^{-1}\ol{p}^{-1}k\,\ol{q}^{-1}\\
&=\ol{p}\,\ol{q}\,\ol{p}^{-1}\,\up{k^{-1}}{\ol{p}\,}\,k\,\ol{q}^{-1}\\
&=\ol{q}\,\delta_q(p)\,\up{k^{-1}}{\ol{p}\,}\,k\,\ol{q}^{-1}\\
&=k\,\delta_q(p)\,\up{k^{-1}}{\ol{p}\,}\,k\,k^{-1}\\
&=k\,\delta_q(p)\,\up{k^{-1}}{\ol{p}\,}
\end{array}
$$
is conjugated in $K$ to $d_q(p)^{-1}\in Z(K)$; hence $w=d_q(p)^{-1}\in Z(K)\subset K$. Furthermore $w$ has a finite conjugacy class in $G$ since $C_G(w)$ contains $C_G(u)\cap \ol{p}\,C_G(u)\,\ol{p}^{-1}$ which has a finite index in $G$. Hence $w\in FC_G(K)$ and condition {\rm (i)} fails.

Suppose now that $\proj(C_G(u))=C_Q(q)$; let $v\in C_Q(q)$, since $C_G(u)\supset K$  one has $\ol{v}\,u\,\ol{v}^{-1}=u$ in $G$. Then:
$$
\begin{array}{rl}
\ol{q}\,k^{-1}&=\ol{v}\,\ol{q}\,k^{-1}\,\ol{v}^{-1}\\
&=\ol{v}\,\ol{q}\,\ol{v}^{-1}\,\up{k^{-1}}{\ol{v}\,}\\
&=\ol{q}\,\delta_q(v)\,\up{k^{-1}}{\ol{v}\,}\\
\implies \  &\delta_q(v)^{-1} k^{-1}\,\up{k}{\ol{v}\,}=d_q(v)=1~.
\end{array}
$$
Hence for all $v\in C_Q(q)$, $d_q(v)=1$, so that $[q]=0$ in $H^1(C_Q(q),Z(K)))$. Condition {\rm (ii)} fails.
\hfill$\square$

 \subsubsection{Proofs of results of \S \ref{fcqfg2}}\null\hfill\smallskip
\label{preuvehomomorphism}

\noindent
{\it Proof of Proposition-Definition \ref{defhomomorphism}.}
The proof of Proposition-Definition \ref{H1ext} remains valid since for any $q\in FC(Q)$, $C_Q(FC(Q))\subset C_Q(q)$, so that the map $\Xi$ is well defined. It remains to show that $\Xi$ is a homomorphism. Let $q_1,q_2\in \ker\Phi$, such that there exists $k_1,k_2\in K$ with $\forall\,x\in K$, $\theta_{q_1}(x)=k_1\,x\,k_1^{-1}$ and $\theta_{q_2}(x)=k_2\,x\,k_2^{-1}$. Then $q_1q_2\in FC(Q)$ and since $\ol{q_1}\,\ol{q_2}=\ol{q_1q_2}\,f(q_1,q_2)$,
$$\theta_{q_1q_2}(x)=k_1k_2f(q_1,q_2)^{-1}\,x\,f(q_1,q_2)(k_1k_2)^{-1}~.$$
{\it Computation of $\delta_{q_1q_2}$.} Given $u\in Q$,
$$
\begin{array}{rll}
\ol{u}\,\ol{q_1}\,\ol{q_2}\,\ol{u}^{-1}&=\ol{q_1}\,\delta_{q_1}(u)\,\ol{q_2}\,\delta_{q_2}(u)&= \ol{q_1}\,\ol{q_2}\,\up{\delta_{q_1}(u)}{k_2^{-1}}\delta_{q_2}(u)\\
&=\ol{u}\,\ol{q_1q_2}\,f(q_1,q_2)\,\ol{u}^{-1}&=\ol{q_1q_2}\,\delta_{q_1q_2}(u)\,\up{f(q_1,q_2)}{\ol{u}\,}\\
\end{array}
$$
$$
\implies\ \delta_{q_1q_2}(u)= f(q_1,q_2)\,\up{\delta_{q_1}(u)}{k_2^{-1}}\delta_{q_2}(u)\,\up{f(q_1,q_2)^{-1}}{\ol{u}\,}~.
$$
{\it Computation of $d_{q_1q_2}$.} For $u\in Q$, by definition:
$$
\begin{array}{rl}
d_{q_1q_2}(u)&=\up{f(q_1,q_2)}{\ol{u}\,}\,\delta_{q_2}(u)^{-1}\up{\delta_{q_1}(u)^{-1}}{k_2^{-1}} f(q_1,q_2)^{-1}f(q_1,q_2)\,k_2^{-1}k_1^{-1}
\up{k_1}{\ol{u}\,}\,\up{k_2}{\ol{u}\,}\,\up{f(q_1,q_2)^{-1}}{\ol{u}\,}\\
&=\up{f(q_1,q_2)}{\ol{u}\,}\,\delta_{q_2}(u)^{-1}\,\up{\delta_{q_1}(u)^{-1}}{k_2^{-1}}\,k_2^{-1}k_1^{-1}
\,\up{k_1}{\ol{u}\,}\,\up{k_2}{\ol{u}\,}\,\up{f(q_1,q_2)^{-1}}{\ol{u}\,}\\
&=\up{f(q_1,q_2)}{\ol{u}\,}\,\delta_{q_2}(u)^{-1}k_2^{-1}\,\delta_{q_1}(u)^{-1}\,k_1^{-1}
\,\up{k_1}{\ol{u}\,}\,\up{k_2}{\ol{u}\,}\,\up{f(q_1,q_2)^{-1}}{\ol{u}\,}\\
&=\up{f(q_1,q_2)}{\ol{u}\,}\,\delta_{q_2}(u)^{-1}k_2^{-1}\,d_{q_1}(u)\,\up{k_2}{\ol{u}\,}\,\up{f(q_1,q_2)^{-1}}{\ol{u}\,}\\
&=\up{f(q_1,q_2)}{\ol{u}\,}\,d_{q_1}(u)\,\delta_{q_2}(u)^{-1}k_2^{-1}\,\up{k_2}{\ol{u}\,}\,\up{f(q_1,q_2)^{-1}}{\ol{u}\,}\\
&=\up{f(q_1,q_2)}{\ol{u}\,}\,d_{q_1}(u)\,d_{q_2}(u)\,\up{f(q_1,q_2)^{-1}}{\ol{u}\,}\\
&=d_{q_1}(u)\,d_{q_2}(u)
\end{array}
$$
(keep in mind that $\forall\,q\in \ker\Phi,\ \forall\,u\in Q$, $d_{q}(u)\in Z(K)$.)\\
Hence $[q_1\,q_2]=[q_1]\,[q_2]$ which proves that $\Xi$ is a homomorphism. \hfill$\square$\\

\noindent{\it Proof of Proposition \ref{extfcqfg}.}
 If $FC(Q)$ is finitely generated then $C_Q(FC(Q))$ has finite index in $Q$ for it is the intersection of the centralizers of the finite family of generators of $FC(Q)$. Suppose now that  $C_Q(FC(Q))$ has finite index in $Q$.
The proof of Theorem \ref{extension} remains valid in such a case if one changes $C_Q(q)$ into $C_Q(FC(Q))$ by noting that under this hypothesis for any $q\in FC(Q)$, $C_Q(q)$ contains $C_Q(FC(Q))$ as a finite index subgroup. 
\hfill$\square$

\section{\bf Semi-direct products}
In this section we particularize Theorem \ref{extension} in the case the extension splits, {\it i.e.} when there exists a section $s:Q\longrightarrow G$ which is a homomorphism; once $K$ and $Q$ are identified with their isomorphic images in $G$, the group $G$ stands for the {\it semi-direct product}  $G=K\rtimes_\theta Q$ 
with associated homomorphism $\theta: Q\longrightarrow Aut(K)$.
 With this
notation, for any $k\in K$, $q\in Q$, $q\,k\,q^{-1}=\theta(q)(k)$ in $G$. We shall write in the following $\theta_q$ instead of $\theta(q)$.
 We denote by ${\Phi}
: FC(Q)\longrightarrow Out(K)$ the homomorphism which makes the following diagram commute:
\[\xymatrix{
Q\ \ar^{\theta}[r] &Aut(K) \ar @{->>}[d]\\
\overset{}{FC(Q)}\ar^{\Phi}[r] \ar@{^{(}->}[u] & Out(K)\\
 }
\]
\noindent
We reach a new formulation of the necessary and sufficient condition 
given by Theorem \ref{extension} that rephrases condition (ii) and can also be expressed in terms of infiniteness of orbits of $\theta(Q)$ in some subgroups of $K$ and injectivity of the restriction of $\theta$ to $FC(Q)$.

\subsection{Statement and rephrasing of the result}\label{split_results} 
The main result for semi-direct products with infinite conjugacy classes is:
\begin{thm}[icc semi-direct product]\label{semidirect}
Let $G=K\rtimes_\theta Q\not=\{1\}$; $G$ is icc  if and only if:
\begin{itemize}
\item[(i)] $FC_G(K)=\{1\}$, and
 \item[(ii)] For all $q\in\ker\Phi\setminus\{1\}$ and $k\in K$ with  $\forall\,x\in K$,\,
$\theta_q(x)=k\,x\,k^{-1}$,
 $k$ has an infinite $\theta(Q)$-orbit.
\end{itemize}
\end{thm}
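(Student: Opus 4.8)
The plan is to deduce Theorem \ref{semidirect} from the general criterion of Theorem \ref{extension}, exploiting that condition (i) is \emph{verbatim} the same in both statements, so that everything reduces to matching condition (ii) of Theorem \ref{extension} (no $q\neq 1$ in $\ker\Phi$ with $[q]=0$) with condition (ii) here (every admissible $k$ has infinite $\theta(Q)$-orbit). In the split case the section is a homomorphism, so $\ol{u}=u$ and $\delta_q(u)=1$ for all $u\in C(q)$, and Remark \ref{H1split} makes the relevant cocycle completely explicit: for $q\in\ker\Phi$ with $\theta_q(x)=k\,x\,k^{-1}$ one has $d_q(u)=[k^{-1},u]=k^{-1}\,\up{k}{u}=k^{-1}\theta_u(k)\in Z$ for every $u\in C(q)$. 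The whole argument is about reading off when this explicit cocycle is a coboundary in terms of $\theta(Q)$-orbits.

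First I would prove that conditions (i) and (ii) are \emph{sufficient}. Assuming (i), by Theorem \ref{extension} it suffices to rule out $q\in\ker\Phi\setminus\{1\}$ with $[q]=0$. If such a $q$ existed, then $d_q$ would be principal: there is $z\in Z$ with $k^{-1}\theta_u(k)=z^{-1}\,\up{z}{u}=z^{-1}\theta_u(z)$ for all $u\in C(q)$. A one-line rearrangement, using that $z$ and $\theta_u(z)$ are central, turns this into $\theta_u(kz^{-1})=kz^{-1}$ for all $u\in C(q)$. Thus $m:=kz^{-1}\in kZ$ is fixed by $\theta(C(q))$, and since $q\in FC(Q)$ the subgroup $C(q)$ has finite index in $Q$, so the $\theta(Q)$-orbit of $m$ has size at most $[Q:C(q)]<\infty$. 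As $m\in kZ$ still satisfies $\theta_q=\mathrm{inn}(m)$, this $m$ is an admissible element with a finite orbit, contradicting (ii). Hence condition (ii) of Theorem \ref{extension} holds and $G$ is icc.

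The reverse implication is where the real work lies. Here the tempting move---inverting the coboundary criterion to read \emph{finite orbit} $\Rightarrow [q]=0$---fails: a finite $\theta(Q)$-orbit only forces $d_q$ to vanish on a finite-index subgroup of $C(q)$, hence only makes $[q]$ \emph{torsion} in $H^1(C(q),Z)$, not zero (a $\Z/2$-with-sign example shows $[q]$ can be nonzero while every orbit is finite). So I would instead prove the contrapositive \emph{directly}, bypassing Theorem \ref{extension}: assume (ii) fails, with $q\in\ker\Phi\setminus\{1\}$ and an admissible $k'$ (so $\theta_q=\mathrm{inn}(k')$) having finite $\theta(Q)$-orbit, and set $w:=q\,(k')^{-1}\in G\setminus K$. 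Mimicking Lemma \ref{lastlemma}, a short computation using $\theta_q^{-1}(x)=(k')^{-1}x\,k'$ gives $x\,w\,x^{-1}=w$ for every $x\in K$, so $K\subseteq C_G(w)$; and for $u\in C(q)$ one finds $u\,w\,u^{-1}=q\,\theta_u(k')^{-1}$, which equals $w$ exactly when $u$ stabilizes $k'$. Therefore $C_G(w)\supseteq K\cdot\big(C(q)\cap\mathrm{Stab}_Q(k')\big)$, a subgroup containing $K$ whose image in $Q$ has finite index (both $C(q)$, since $q\in FC(Q)$, and $\mathrm{Stab}_Q(k')$, since the orbit is finite). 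Since $C_G(w)\supseteq K=\ker\proj$, this yields $[G:C_G(w)]=[Q:\proj(C_G(w))]<\infty$ with $w\neq 1$, so $G$ is not icc.

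The main obstacle, as signalled above, is precisely that the cohomological condition of Theorem \ref{extension} and the orbit condition here are \emph{not} equivalent element-by-element on $\ker\Phi$: passing through $H^1$ discards the torsion, so one cannot merely transport the equivalence underlying Theorem \ref{extension}. The resolution is asymmetric---invoke Theorem \ref{extension} for sufficiency, but supply the self-contained construction of $w=q\,(k')^{-1}$ for necessity. The remaining steps are routine: the coboundary-to-fixed-point rearrangement, and the two conjugation identities $x\,w\,x^{-1}=w$ and $u\,w\,u^{-1}=q\,\theta_u(k')^{-1}$, both immediate from $\theta_q=\mathrm{inn}(k')$ and the splitting.
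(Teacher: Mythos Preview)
Your proof is correct. The sufficiency direction coincides with the paper's Step~2: assuming $[q]=0$ you produce $m=kz^{-1}\in kZ$ fixed by $\theta(C(q))$, hence with finite $\theta(Q)$-orbit, exactly as the paper does (the paper writes $k'=kz$ with the opposite sign convention for the coboundary, but the argument is identical).

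For necessity, however, you take a genuinely different route. The paper proves the contrapositive via a case split: given $q\in\ker\Phi\setminus\{1\}$ and $k$ with finite $\theta(Q)$-orbit, it sets $Z_0=\theta^{-1}(\mathrm{Stab}_{\theta(Q)}(k))\cap C_Q(q)$ and distinguishes whether $Z_0=C_Q(q)$ or not. In the first case $d_q\equiv 1$ on $C_Q(q)$ so $[q]=0$ and condition~(ii) of Theorem~\ref{extension} fails; in the second case one manufactures a nontrivial central element $z=[u,k]$ with $u\in C_Q(q)\setminus Z_0$, whose centralizer has finite index, so condition~(i) fails. Either way $G$ is not icc, by Theorem~\ref{extension}. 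You instead bypass Theorem~\ref{extension} entirely and exhibit $w=q\,(k')^{-1}$ directly, checking that $K$ and $C(q)\cap\mathrm{Stab}_Q(k')$ both centralize it. This is cleaner for the stated goal: no case split, no appeal to the cohomology class, and the two conjugation identities are immediate. What the paper's approach buys in exchange is the sharper structural statement that, \emph{in the presence of} condition~(i), the orbit condition~(ii) here and the cohomological condition~(ii) of Theorem~\ref{extension} are actually equivalent---your remark that they are not equivalent element-by-element is correct, and the paper's first case is precisely where the discrepancy is absorbed into a failure of~(i).
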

\noindent
The result is proved in the next section.

\begin{remark} \label{split_orbit}
Let $\vartheta : G\longrightarrow Aut(K)$ be  the
homomorphism that extends $\theta$ to $G$, {\it i.e.} $\forall\, g\in G,\, k\in K$, $\vartheta(g)(k)=g\,k\,g^{-1}$, and $\vartheta_K:K\longrightarrow Inn(K)$ its restriction to $K$; the diagram below commutes.
\[
\xymatrix{
K \ar@{->>}^{\vartheta_{K}}[r]\ar@{_{(}->}[rd] & Inn(K)
\ar@{_{(}->}[rd]
\\
& G \ar@{->>}[r]^{\vartheta} &\vartheta(G)  \ar@{^{(}->}[r]& Aut(K)\\
Q \ar@{^{(}->}[ru] \ar@{->>}[r]_{\theta} &\theta(Q) \ar@{^{(}->}[ru]
}
\]

\noindent
Define:
$$
K_\theta:=\vartheta_K^{-1}(Inn(K)\cap\theta(FC(Q)))
$$
$K_\theta$ is a subgroup of $K$ that is preserved under the action of $\theta(Q)$. Then Theorem \ref{semidirect} rephrases as:\smallskip\\
{\bf Rephrasing of Theorem \ref{semidirect}.}\ {\sl
Let $G=K\rtimes_\theta Q\not=\{1\}$;
Then $G$ is icc if and only if the following conditions hold:
\begin{itemize}
\item[(i)] $\theta(Q)$ has only infinite orbits in $FC(K)\setminus\{1\}$,
 \item[(ii.a)] $\theta(Q)$ has only infinite orbits  in  $K_\theta\setminus\{1\}$,
\item[(ii.b)]   $\theta:Q\longrightarrow Aut(K)$ restricted to $FC(Q)$ is injective,
\end{itemize}
}
\end{remark}

\begin{example}
In particular, whenever  $K\setminus \{1\}$ contains no finite $\theta(Q)$-orbits, $G$ is icc if and only if $\theta:FC(Q)\longrightarrow Aut(K)$ is injective.\smallskip\\
\noindent For example
The semi-direct product $K\rtimes Aut(K)$ is icc if and only if $Aut(K)$ has no finite orbits in $K\setminus \{1\}$; {\it e.g.} the group of rigid  motions  $\R^n\rtimes O(n)$ of the $n^{th}$-dimensional euclidian space is icc, while none of its discrete subgroups are icc, since by the Bieberbach theorem, they are all virtually Abelian (see Proposition \ref{finite_index}).\medskip
\end{example}

\subsection{Proof of Theorem \ref{semidirect}}  We prove the equivalence here between condition (ii) of Theorem \ref{semidirect} and condition (ii) of Theorem \ref{extension}. We proceed in two steps.\smallskip\\
{\it Step 1. Conditions (i) and (ii) of Theorem \ref{extension} $\implies$  condition $(ii)$ of Theorem \ref{semidirect}.}\\ Suppose that condition $(ii)$ of Theorem \ref{semidirect} does not hold: there exists $q\in FC(Q)\setminus\{1\}$ such that $\theta_q(x)=kxk^{-1}$  $\forall\,x\in K$ for some $k$ with finite $\theta_q$-orbit. Then $Q_0=\theta^{-1}(Stab_{\theta(Q)}(k))$ has a finite index in $Q$, and let $Z_0=Q_0\cap Z_Q(q)$; $Z_0$ has a finite index in $Q$.\smallskip\\
\noindent{\sl First case.} Suppose $Z_Q(q)\setminus Z_0\not=\varnothing$ and let $u\in Z_Q(q)\setminus Z_0$; since $[u,q]=1$, necessarily $[u,k]\in Z(K)$, thereby since $u\not\in Z_0$, there exists $z\not=1$ in $Z(K)$ such that $uku^{-1}=kz$. Then $Z_G(z)\supset K$, and since $z=k^{-1}uku^{-1}$, $Z_G(z)\supset Q_0\cap u\,Q_0\,u^{-1}$. Hence $Z_G(z)$ has a finite index in $G$ therefore $FC_G(K)\not=\{1\}$: condition (i) fails.\smallskip\\
\noindent{\sl Second case.}
Suppose that $Z_0=Z_Q(q)$. For any $u\in Z_Q(q)$, $\theta(u)\in Stab_{\theta(Q)}(k)$ so that $d_q(u)=[k^{-1},u]=1$ and $[q]=0$ in $H^1(Z_Q(q),Z(K))$ (see remark \ref{H1split}); condition (ii) of Theorem \ref{extension} fails.\medskip\\
{\it Step 2. Condition (ii) of Theorem \ref{semidirect} $\implies$ condition (ii) of Theorem \ref{extension}.}\\
Suppose that condition (ii) of Theorem \ref{extension} does not hold: let $q\in FC(Q)\setminus\{1\}$ and $k\in K$ such that $\forall\,x\in K$, $\theta_q(x)=k\,x\,k^{-1}$, and suppose that $[q]=0$ in $H^1(Z_Q(q),Z(K))$. There exists $z\in Z(K)$ such that $\forall\,u\in Z_Q(q)$, $k^{-1}u\,k\,u^{-1}=z\,\up{z^{-1}}{u}$ and it follows that $[u,kz]=1$:
$$
\begin{array}{rl}
&k^{-1}u\,k\,u^{-1}=z\,\up{z^{-1}}{u}\\
 \iff &z^{-1} k^{-1}u\,k\,u^{-1}\,\up{z}{u}=1\\
 \iff &(k\,z)^{-1}u\,k\,z\,\,u^{-1}=1
\end{array}
$$
 Let $k'=kz$, then $\theta_q(x)=k'\,x\,k'^{-1}$
 and since $Z_Q(q)$ has a finite index in $Q$, $k'$ has a finite $\theta(Q)$-orbit. Condition $(ii)$ of Theorem \ref{semidirect} fails.\hfill$\square$\\

\section{\bf Wreath products}

Throughout the section, $D$, $Q$ are groups and $\Omega$ is a $Q$-set, {\it i.e.} a set equipped with a left $Q$-action.  Let $G$ be the {\it complete} (or {\it unrestricted}) {\it wreath product} denoted by $G=D\wr_\Omega Q$, {\it i.e.}
let $D^{\Omega}$ denote the group of maps from $\Omega$ to
$D$  and let $\lambda : Q\longrightarrow
Aut(D^{\Omega})$ be the homomorphism  defined by $\forall x\in \Omega$,
$\forall\f\in D^{\O}$, $\lambda(q)(\f)(x)=\f(q^{-1}x)$ ;  the
group $G$ is the split extension $G=D^{\O}\rtimes
Q$ associated with $\lambda$, in the sense that $\forall \f\in
D^{\O}$, $\forall q\in Q$, $q\f q^{-1}=\lambda(q)(\f)$. When $\Omega=Q$ with $Q$ acting by  multiplication on the left
 one talks of the {\it complete regular wreath product} denoted by $D\wr Q$.

We also consider the {\it restricted wreath product} $G=D\wr_{\O r} Q$: let $D^{(\O)}$ be the group of maps from $\Omega$ to $D$ with finite support, and define as above $G$ as the split extension $G=D^{(\O)}\rtimes Q$ associated with $\lambda$; $G$ is a subgroup of $D\wr_\Omega Q$ which is countable whenever $D$ and $Q$ are countable. When $\Omega=Q$, one talks of the {\it restricted regular wreath product} $D\wr_r Q$.

\subsection{Statement of the results}

We obtain different results for restricted and complete wreath products. The two following results concern restricted wreath products.

\begin{thm}[icc restricted wreath products]\label{wreath_prod}
Let  $G=D\wr_{\Omega  r} Q$, with $D\not=\{1\}$ ; a necessary and
sufficient condition for $G$  to be icc is that,  on the one hand
at least one of the
following conditions holds:
\begin{itemize}
 \item[(i.a)] $D$ is icc,
 \item[(i.b)] all
$Q$-orbits in $\O$ are infinite.
\end{itemize}
and on the other hand the following condition holds:
\begin{itemize}
\item[(ii)]  1 is the only element of $FC(Q)$ which fixes $\O$
pointwise, \end{itemize}
\end{thm}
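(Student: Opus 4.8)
The plan is to specialize Theorem \ref{semidirect} to the split extension $G=D^{(\O)}\rtimes Q$ with $K=D^{(\O)}$ and $\theta=\l$, and to translate each of its two conditions into the two assertions of the theorem. Throughout I use the two opposite ways elements act on $K$: for $\f_0\in K$, conjugation by $\f_0$ sends $\psi$ to the map $x\mapsto \f_0(x)\,\psi(x)\,\f_0(x)^{-1}$, so that inner automorphisms of $K$ coming from $K$ act \emph{coordinatewise} by inner automorphisms of $D$ and preserve supports, whereas $\l(q)$ sends $\psi$ to $x\mapsto\psi(q^{-1}x)$, i.e. \emph{permutes} coordinates according to the action of $q$ on $\O$.

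First I would identify $\ker\Phi$ and thereby translate condition (ii) of Theorem \ref{semidirect}. The structural claim is that $\l(q)$ is inner in $K$ if and only if $q$ fixes $\O$ pointwise, in which case $\l(q)=\mathrm{Id}$. Indeed, testing the equality $\l(q)(\psi)=\up{\psi}{\f_0}$ on the map $\psi$ supported at a single point $x$ with a nontrivial value (available since $D\neq\{1\}$) forces $qx=x$, because the left side is supported at $qx$ and the right side at $x$; letting $x$ range over $\O$ gives the claim, and the converse is immediate. Hence $\ker\Phi=\{q\in FC(Q)\,:\,q\text{ fixes }\O\text{ pointwise}\}$, and for each such $q$ one has $\l(q)=\mathrm{Id}$, so $k=1\in Z(K)$ is an admissible choice in condition (ii) of Theorem \ref{semidirect} and has a finite (trivial) $\theta(Q)$-orbit. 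Consequently that condition fails as soon as $\ker\Phi\neq\{1\}$ and holds vacuously otherwise; it is therefore equivalent to $\ker\Phi=\{1\}$, which is exactly condition (ii) of the present statement.

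Next I would translate condition (i), $FC_G(K)=\{1\}$, into the disjunction (i.a) or (i.b). Since the support of any $G$-conjugate of $\f\in K$ is $q\,\mathrm{supp}(\f)$ for the relevant $q\in Q$, a finite conjugacy class forces every point of $\mathrm{supp}(\f)$ to have a finite $Q$-orbit; this yields (i.b)$\Rightarrow FC_G(K)=\{1\}$ directly. Likewise (i.a)$\Rightarrow FC_G(K)=\{1\}$, for if $\f\neq1$ one conjugates by elements of $K$ supported at a single $x_0\in\mathrm{supp}(\f)$, realizing at that coordinate every $D$-conjugate of $\f(x_0)\neq1$, which are infinitely many when $D$ is icc. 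For the converse, assuming neither (i.a) nor (i.b), choose $d\neq1$ in $D$ with finite $D$-conjugacy class (it exists because $D\neq\{1\}$ is not icc, so $FC(D)\neq\{1\}$) and a finite $Q$-orbit $O\subset\O$; the map $\f$ equal to $d$ on $O$ and $1$ elsewhere lies in $K\setminus\{1\}$, is fixed by $\l(Q)$ (being constant on the invariant finite set $O$), and has only finitely many $K$-conjugates, their values on $O$ staying inside the finite conjugacy class of $d$. Thus $\f\in FC_G(K)\setminus\{1\}$, so $FC_G(K)=\{1\}$ is equivalent to (i.a) or (i.b).

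The main obstacle is the structural lemma of the second paragraph — that inner automorphisms of the restricted product $D^{(\O)}$ cannot move coordinates — which underlies the identification of $\ker\Phi$, together with the bookkeeping in the converse of the third paragraph: there the \emph{finiteness} of the chosen $Q$-orbit $O$ is what keeps $\f$ inside the restricted product, and the finiteness of the $D$-conjugacy class of $d$ is what bounds the number of $K$-conjugates of $\f$.
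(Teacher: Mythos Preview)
Your argument is correct and follows essentially the same route as the paper: both specialize the semi-direct product criterion to $K=D^{(\O)}$, observe that any $\l(q)$ which is inner must in fact be the identity (so $\ker\Phi$ consists exactly of the elements of $FC(Q)$ fixing $\O$ pointwise, and the choice $k=1$ immediately disposes of condition~(ii)), and then show $FC_G(K)=\{1\}\iff$ (i.a) or (i.b) by the same support-tracking arguments. The only cosmetic difference is that the paper invokes the rephrasing of Theorem~\ref{semidirect} given in Remark~\ref{split_orbit} (so that the ``$K_\theta$'' condition is absorbed into condition~(i)), whereas you apply Theorem~\ref{semidirect} in its original form and note directly that $k=1$ has a finite orbit.
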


\noindent
\begin{cor}[restricted regular wreath products icc]
When the $Q$-action on $\O$ is free (in particular, for $D\wr_r Q$), $G$ is
icc if and only if either $D$ is icc or $Q$ is infinite.
\end{cor}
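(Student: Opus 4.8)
The plan is to obtain the corollary as an immediate specialization of Theorem \ref{wreath_prod}, by checking that freeness of the $Q$-action trivializes both condition (ii) and the orbit clause (i.b). I shall assume $\Omega\not=\varnothing$, which holds in the regular case $\Omega=Q$ and without which $D^{(\Omega)}=\{1\}$ makes $G=Q$ degenerate.

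First I would compute orbit cardinalities. Freeness means $Stab_Q(x)=\{1\}$ for every $x\in\Omega$, so the orbit $Q\cdot x$ has cardinality $[Q:Stab_Q(x)]=|Q|$. Consequently every orbit is infinite precisely when $Q$ is infinite, and condition (i.b) of Theorem \ref{wreath_prod} is equivalent to \textquotedblleft$Q$ is infinite\textquotedblright. This single identification carries essentially all the content.

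Next I would dispose of condition (ii). If $q\in Q$ fixes $\Omega$ pointwise then, picking any $x\in\Omega$ (possible since $\Omega\not=\varnothing$), freeness forces $q=1$; hence $1$ is the only element of $Q$, \emph{a fortiori} of $FC(Q)$, fixing $\Omega$ pointwise, so condition (ii) holds unconditionally. Substituting these two observations into Theorem \ref{wreath_prod}, whose criterion is \textquotedblleft(i.a) or (i.b), and (ii)\textquotedblright, the statement reduces to \textquotedblleft$D$ is icc, or $Q$ is infinite\textquotedblright, as claimed. I anticipate no genuine obstacle: the only nonformal step is the orbit-size equality $|Q\cdot x|=|Q|$, everything else being a direct transcription of the theorem in the free case.
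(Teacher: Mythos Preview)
Your argument is correct and is precisely the immediate specialization of Theorem \ref{wreath_prod} that the paper intends: the corollary is stated without proof there, and your two observations (freeness makes every orbit have cardinality $|Q|$, and freeness together with $\Omega\not=\varnothing$ forces condition (ii) to hold automatically) are exactly the routine checks needed.
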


We now turn to the similar results for complete wreath products.

\begin{thm}[icc complete wreath products]\label{wreath_prod2}
Let  $G=D\wr_{\Omega  r} Q$, with $D\not=\{1\}$ ; a necessary and
sufficient condition for $G$  to be icc is that,   on the one hand
at least one of the
following conditions holds:
\begin{itemize}
 \item[(i.a)] $D$ is icc,
 \item[(i.b)] all
$Q$-orbits in $\O$ are infinite.
\end{itemize}
and on the other hand the two following conditions hold:
\begin{itemize}
\item[(ii)]  1 is the only element of $FC(Q)$ which fixes $\O$
pointwise,
\item[(iii)] $D$ is centerless.
 \end{itemize}
\end{thm}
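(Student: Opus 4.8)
The plan is to read $G$ as the semi-direct product $G=K\rtimes_\lambda Q$ with $K=D^{\Omega}$ and to invoke Theorem \ref{semidirect}, so that everything reduces to translating its two conditions into the data $(D,Q,\Omega)$. Throughout write $Z=Z(K)=Z(D)^{\Omega}$ and record two elementary facts about componentwise conjugation in a complete direct product: for $\psi\in K$ the map $\phi\mapsto\psi\phi\psi^{-1}$ preserves the support $\mathrm{supp}(\phi)=\{x:\phi(x)\neq1\}$ and acts coordinatewise, so that $|{}^{K}\phi|=\prod_{x}|{}^{D}\phi(x)|$; consequently $|{}^{K}\phi|<\infty$ if and only if $\phi(x)\in FC(D)$ for every $x$ and $\phi(x)\in Z(D)$ for all but finitely many $x$.

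First I would dispatch condition (ii) of Theorem \ref{semidirect}. Since $\lambda(q)$ permutes coordinates according to the $Q$-action on $\Omega$ while an inner automorphism of $K$ fixes every coordinate, a support argument (using $D\neq\{1\}$) shows that $\lambda(q)$ is inner exactly when $q$ fixes $\Omega$ pointwise, in which case $\lambda(q)=\mathrm{id}$. Hence $\ker\Phi=\{q\in FC(Q): q\text{ fixes }\Omega\text{ pointwise}\}$ and every such $q$ has $\theta_q=\mathrm{id}$, so the elements $k$ with $\theta_q(x)=kxk^{-1}$ for all $x$ are precisely those of $Z$. As $k=1\in Z$ always has the one-point $\theta(Q)$-orbit $\{1\}$, condition (ii) of Theorem \ref{semidirect} holds if and only if there is no $q\in\ker\Phi\setminus\{1\}$ at all, i.e. if and only if $\ker\Phi=\{1\}$; this is exactly condition (ii) of the statement.

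Next I would treat condition (i) of Theorem \ref{semidirect}, namely $FC_G(K)=\{1\}$. The key bookkeeping device is the index formula $|{}^{G}\phi|=[Q:Q_\phi]\cdot|{}^{K}\phi|$, where $Q_\phi=\proj(C_G(\phi))=\{q\in Q:\lambda(q)\phi\in{}^{K}\phi\}$; it follows from the exact sequence $1\to C_K(\phi)\to C_G(\phi)\to Q_\phi\to1$ and cleanly separates the coordinatewise contribution $|{}^{K}\phi|$ from the permutation contribution $[Q:Q_\phi]$. For the constructive direction, if $D$ has non-trivial centre I take $z\in Z(D)\setminus\{1\}$ and let $\phi$ be the \emph{constant} function $z$: being central in $K$ and fixed by every $\lambda(q)$ it is central in $G$, so $FC_G(K)\neq\{1\}$ — this is where completeness of the product is indispensable and is precisely why condition (iii) appears. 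Likewise, if $D$ is not icc (so $FC(D)\neq\{1\}$) and some $Q$-orbit $\mathcal{O}\subseteq\Omega$ is finite, I spread a fixed $d\in FC(D)\setminus\{1\}$ over the $Q$-invariant set $\mathcal{O}$; the resulting $\phi$ is $Q$-invariant with $|{}^{K}\phi|<\infty$, whence again $FC_G(K)\neq\{1\}$. For the converse I assume (iii) together with (i.a) or (i.b) and suppose some $\phi\neq1$ had $|{}^{G}\phi|<\infty$: then $|{}^{K}\phi|<\infty$ forces $\phi(x)\in FC(D)$ for all $x$, which already gives $\phi=1$ when $D$ is icc; and when all orbits are infinite, $Z(D)=\{1\}$ makes $\mathrm{supp}(\phi)$ finite, while $Q_\phi$ is confined to the setwise stabiliser of this finite support, a subgroup of infinite index because any point of the support has an infinite orbit, so $[Q:Q_\phi]=\infty$, contradicting finiteness. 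This yields $FC_G(K)=\{1\}\iff(\text{(i.a) or (i.b)})\text{ and (iii)}$, and assembling the two translations through Theorem \ref{semidirect} gives the statement.

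The main obstacle is isolating exactly why the complete product requires the extra hypothesis (iii) that the restricted product (Theorem \ref{wreath_prod}) does not: the constant central function produces a non-trivial finite (indeed one-point) $G$-conjugacy class inside $K$, and it exists solely because infinite-support elements are permitted. The remaining difficulty is organisational, namely keeping the coordinatewise action (governed by $FC(D)$ and $Z(D)$) separated from the coordinate-permuting action of $Q$ (governed by orbit sizes), which the index formula $|{}^{G}\phi|=[Q:Q_\phi]\cdot|{}^{K}\phi|$ is tailored to achieve.
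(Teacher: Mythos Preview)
Your proof is correct and follows essentially the same route as the paper: both view $G=D^{\Omega}\rtimes_\lambda Q$ and apply Theorem \ref{semidirect} (the paper via the rephrasing in Remark \ref{split_orbit}), reducing condition (ii) there to condition (ii) of the statement through the observation that $\lambda(q)$ inner forces $\lambda(q)=\mathrm{id}$, and reducing $FC_G(K)=\{1\}$ to ((i.a) or (i.b)) together with (iii) via the constant--central--function obstruction and a support/orbit analysis. Your explicit index formula $|{}^{G}\phi|=[Q:Q_\phi]\cdot|{}^{K}\phi|$ and the description of $FC(K)$ are tidy organizational devices that the paper leaves implicit, but the substance of the argument---in particular the contrapositive pairing ``$Z(D)=\{1\}\Rightarrow$ finite support $\Rightarrow$ infinite $[Q:Q_\phi]$'' versus the paper's ``finite $Q$-orbit $\Rightarrow$ infinite support $\Rightarrow Z(D)\neq\{1\}$''---is the same.
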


\begin{cor}[complete regular wreath products icc]
When the $Q$-action on $\O$ is free (in particular, for $D\wr Q$), $G$ is
icc if and only if either $D$ is icc or $Q$ is infinite and $D$ is centerless.
\end{cor}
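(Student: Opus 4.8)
The plan is to derive the corollary directly from Theorem \ref{wreath_prod2} by specializing each of its conditions to the case of a \emph{free} $Q$-action on $\Omega$ (which contains the regular case $\Omega = Q$, where in particular $\Omega \neq \varnothing$). First I would record two elementary consequences of freeness. Since the action is free, every point-stabilizer is trivial, so the only element of $Q$ fixing any point of $\Omega$ is $1$; as $\Omega \neq \varnothing$, this means condition (ii) of Theorem \ref{wreath_prod2} holds automatically. Likewise, freeness forces every $Q$-orbit in $\Omega$ to be in bijection with $Q$ (the orbit map $q \mapsto q\cdot x$ is injective), so condition (i.b) — that all $Q$-orbits are infinite — is equivalent to $Q$ being infinite.

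With these two reductions in hand, Theorem \ref{wreath_prod2} asserts that $G$ is icc if and only if [($D$ is icc) or ($Q$ is infinite)] \emph{and} [$D$ is centerless]. It then remains to simplify this Boolean combination into the form stated in the corollary. Here I would invoke the observation recorded in the preliminaries that an icc group has trivial center. Consequently the clause ``$D$ is icc and $D$ is centerless'' reduces to ``$D$ is icc'', and distributing the centerlessness requirement over the disjunction yields exactly ``($D$ is icc) or ($Q$ is infinite and $D$ is centerless)'', which is the assertion of the corollary.

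The argument is essentially a bookkeeping specialization of the general wreath-product theorem, so I do not anticipate a genuine obstacle. The only two points requiring care are the logical simplification in the last step — where one must use that icc $\implies$ centerless to collapse the redundant clause, rather than leaving the statement in its unsimplified conjunctive form — and keeping in mind the standing assumption $\Omega \neq \varnothing$ (automatic for $D \wr Q$, since $\Omega = Q \ni 1$), which is what makes freeness genuinely trivialize condition (ii) instead of leaving it vacuous.
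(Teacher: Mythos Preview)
Your proposal is correct and is exactly the natural derivation the paper has in mind: the corollary is stated immediately after Theorem~\ref{wreath_prod2} without a separate proof, so the intended argument is precisely the specialization you carry out (freeness trivializes condition~(ii), makes (i.b) equivalent to $|Q|=\infty$, and then one uses that icc groups are centerless to collapse the conjunction). There is nothing to add.
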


\begin{example} The lamplighter group $\ZnZ{2}\wr_r\Z$ is icc while the complete wreath product $\ZnZ{2}\wr \Z$ is not. Let $\Omega=\ZnZ{n}$ be equipped with the natural $\Z$-action $(p,q\mod n)\mapsto p+q\mod n$, then $\ZnZ{n}\wr_\Omega\Z$ and $\ZnZ{n}\wr_{\Omega,r}\Z$ are not icc  (conditions $(i.a)$ and $(i.b)$ fail).
\end{example}


\begin{cor} The icc property is stable under wreath product: any complete (respectively restricted) wreath product of icc groups is icc.
\end{cor}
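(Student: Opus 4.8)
The plan is to apply the two main characterization theorems for wreath products (Theorems \ref{wreath_prod} and \ref{wreath_prod2}) and verify that their hypotheses are preserved when $D$ and $Q$ are both icc. The key observation is that an icc group is in particular infinite, so the ``size'' hypotheses in those theorems become easy to satisfy. First I would treat the restricted case, where $G=D\wr_{\Omega r}Q$ with $D$ and $Q$ both icc. Since $D$ is icc, condition (i.a) of Theorem \ref{wreath_prod} holds immediately, so the first alternative is satisfied regardless of the $Q$-action on $\Omega$. It then remains to check condition (ii): that $1$ is the only element of $FC(Q)$ fixing $\Omega$ pointwise. But $Q$ icc means $FC(Q)=\{1\}$ (recall from \S\ref{section1} that $G\neq\{1\}$ is icc if and only if $FC(G)=\{1\}$), so condition (ii) is automatic. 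Hence $G$ is icc.

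For the complete case $G=D\wr_\Omega Q$, I would again invoke $D$ icc to obtain condition (i.a) of Theorem \ref{wreath_prod2}, and $Q$ icc to obtain condition (ii) exactly as above via $FC(Q)=\{1\}$. The only additional requirement is condition (iii): that $D$ is centerless. Here I would use the elementary fact, already noted in \S\ref{section1}, that an icc group has trivial center; since $D$ is icc it is in particular centerless, so condition (iii) holds as well. Thus all hypotheses of Theorem \ref{wreath_prod2} are met and $G$ is icc.

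The main point to be careful about is not any hard computation but rather the correct bookkeeping of which facts about icc groups are being used: namely that icc $\implies$ infinite, icc $\iff FC=\{1\}$, and icc $\implies$ trivial center. None of these is an obstacle, as all were established in the preliminaries. I expect the proof to be short, essentially a matter of reading off the hypotheses of the two theorems and observing that the icc assumptions on $D$ and $Q$ supply each one. The statement for arbitrary wreath products of icc groups follows by combining the restricted and complete conclusions, and there is no genuine difficulty to overcome.
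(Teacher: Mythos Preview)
Your proposal is correct and is exactly the intended argument: the paper states this corollary without proof precisely because it is an immediate consequence of Theorems~\ref{wreath_prod} and~\ref{wreath_prod2}, obtained by observing that $D$ icc gives condition (i.a) (and, in the complete case, condition (iii) via $Z(D)=\{1\}$), while $Q$ icc gives $FC(Q)=\{1\}$ and hence condition (ii). There is nothing to add.
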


\subsection{Proofs of the results}
In both cases $G$ is seen as a semi-direct product and the property icc is discussed using the reformulation of Theorem \ref{semidirect} 
in remark \ref{split_orbit}. 
\subsubsection{Proof of Theorem \ref{wreath_prod}} Let $K=D^{(\O)}$; 
$G=K\rtimes_\theta Q$. First observe that $FC(K)=FC(D)^{(\O )}$, hence $FC(K)=\{1\}$ if and only if $FC(D)=\{1\}$. Condition (i) in remark \ref{split_orbit} is equivalent to $FC_G(K)=\{1\}$. Let's prove that here:
$$
FC_G(K)=\{1\}\quad\iff\quad \left\lbrace\begin{array}{l}FC(D)=\{1\}\\
\text{or}\\
\text{all $Q$-orbits in $\O$ are infinite}
\end{array}\right.
$$
Indeed, suppose that $FC_G(K)=\{1\}$; if $FC(D)\not=\{1\}$ let $u\not=1$ lying in $FC(D)$. If $\O$ would contain a finite $Q$-orbit $Q.\w$ then the map from $\O$ to $D$ which equals $u$ in $\w$ and $1$ everywhere else would lie in $FC_G(K)\setminus\{1\}$; impossible. This proves the necessary part of the assumption. Reciprocally: if $FC(D)=\{1\}$ 
then $FC(K)=\{1\}$ and therefore $FC_G(K)=\{1\}$. If $FC(D)\not=\{1\}$ and $\O$ contains only infinite $Q$-orbits, let $\eta\in FC(D)^{(\O)}\setminus\{1\}$, then $Q.{\rm supp}(\eta)$ is infinite but also equals the finite union of finite sets $\cup_{q\in Q} {\rm supp}(\null^q\eta)$; impossible. This proves the sufficient part of the assumption. 
Hence condition (i) in remark \ref{split_orbit} is equivalent to conditions (i.a) or (i.b) of Theorem \ref{wreath_prod}.

Secondly, $K_\theta=Z(K)=Z(D)^{(\O)}$ since here all automorphisms in $\theta(Q)$ that are inner are therefore the identity.  Here condition (ii.a) in remark \ref{split_orbit} is a particular case of its condition (i).

Finally, $\theta :Q\longrightarrow Aut(K)$ is injective if and only if $1$ is the only element of $Q$ that fixes $\O$ pointwise. Condition (ii.b) in remark \ref{split_orbit} is equivalent to condition (ii) of Theorem \ref{wreath_prod}. We conclude with the formulation of Theorem \ref{semidirect} in remark \ref{split_orbit}.\hfill$\square$

\subsubsection{Proof of Theorem \ref{wreath_prod2}}
Let $K=D^{\O}$; 
$G=K\rtimes_\theta Q$ and keep in mind the proof of Theorem \ref{wreath_prod}. Here again $FC(K)=FC(D)^{\O }$. Condition (i) in remark \ref{split_orbit} is equivalent to $FC_G(K)=\{1\}$. Let's prove that here:
$$
FC_G(K)=\{1\}\quad\iff\quad \left\lbrace\begin{array}{l}FC(D)=\{1\}\\
\text{or}\\
\text{all $Q$-orbits in $\O$ are infinite and $Z(K)=\{1\}$}
\end{array}\right.
$$
As above $FC_G(K)=\{1\}$ implies that either $FC(D)=\{1\}$ or $\O$ contains only infinite $Q$-orbits. But moreover necessarily $Z(D)=\{1\}$ for otherwise let $z\not=1$ lying in $Z(D)$ the map from $\O$ to $D$ constant equal to $z$ would lie in $Z(K)$ and thereby in $FC_G(K)$. This proves the necessary part. Reciprocally; if $FC(D)=\{1\}$ then $FC_G(K)=\{1\}$  so suppose in the following that $FC(D)\not=\{1\}$. Suppose that $\O$ contains only infinite $Q$-orbit and that $FC_G(K)\not=\{1\}$; let $\eta:\O\longrightarrow FC(K)$ be an element of $FC_G(K)\setminus\{1\}$. Since $\null^Q\eta$ is finite and $Q$ has no finite orbits in $\O$, necessarily ${\rm supp}(\eta)$ is infinite. Moreover its conjugacy class $\null^K\eta$ contains all $\eta':\O\longrightarrow FC(K)$ such that $\forall\,x\in\O$, $\eta'(x)$ and $\eta(x)$ are conjugate in $D$. In particular $\null^K\eta$ is infinite once for an infinite set of elements $x$ in $\O$, $\null^D\eta(x)$ is not a singleton. Hence necessarily $Z(D)\not=\{1\}$. This proves the sufficient part. The remaining of the proof goes the same way as for Theorem \ref{wreath_prod}.\hfill$\square$

\section{\bf Examples}\label{pc}

We now look at some particular cases and look how Theorem \ref{extension} rephrases. Among them extensions where the factors verify some additional hypothesis, but also groups containing a finite index subgroups, amalgamated products and HNN extensions. In those last two cases we recover briefly the main results in \cite{ydc} that answer two questions of Pierre de la Harpe (cf. \cite{pdlh}).


\subsection{Finite extensions}
\label{particulardebut}
We consider here extensions with finite quotients.
\begin{prop}[icc finite extension]\label{qfinite}
Let $G$ be a finite extension:
$$
1\longrightarrow K \longrightarrow G\longrightarrow Q\, \text{finite}\longrightarrow 1
$$
Then $G$ is icc if and only if $K$ is icc and $\Theta:Q\longrightarrow Out(K)$ is injective.
\end{prop}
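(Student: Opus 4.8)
The plan is to invoke Theorem \ref{extension}, exploiting the two simplifications forced by the finiteness of $Q$. First I would observe that since $Q$ is finite every conjugacy class of $Q$ is finite, so $FC(Q)=Q$ and the map $\Phi$ of Theorem \ref{extension} coincides with the full coupling $\Theta$; moreover $[G:K]=|Q|<\infty$.

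The central step is a lemma identifying the two $FC$-subgroups, namely $FC_G(K)=FC(K)$. The inclusion $FC_G(K)\subseteq FC(K)$ is immediate because $\null^Ku\subseteq\null^Gu$ for every $u\in K$. For the reverse inclusion, given $u\in FC(K)$ I would write $G$ as a finite union of left $K$-cosets $g_iK$; then $\null^Gu=\bigcup_i\null^{g_i}(\null^Ku)$ is a finite union of finite sets, hence finite, so $u\in FC_G(K)$. Consequently condition (i) of Theorem \ref{extension}, namely $FC_G(K)=\{1\}$, is equivalent to $FC(K)=\{1\}$.

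With this in hand the equivalence splits cleanly. For sufficiency, if $K$ is icc then $FC(K)=\{1\}$ gives condition (i), and if $\Theta$ is injective then $\ker\Phi=\ker\Theta=\{1\}$, so condition (ii) holds vacuously; Theorem \ref{extension} then yields that $G$ is icc. For necessity, suppose $G$ is icc. Since $G$ is infinite while $Q$ is finite, $K$ must be infinite; condition (i) together with the lemma gives $FC(K)=\{1\}$, so $K$ is icc.

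The final point, and the one requiring the cohomological input, is injectivity of $\Theta$. Because $K$ is now known to be icc, its center $Z=Z(K)$ is trivial, so the coefficient module of every group $H^1(C(q),Z)$ is trivial and hence $[q]=0$ for every $q\in\ker\Phi$. Condition (ii) of Theorem \ref{extension} then forbids any $q\neq 1$ in $\ker\Phi=\ker\Theta$, forcing $\Theta$ to be injective. I expect the main (though still modest) obstacle to be the coset-counting lemma $FC_G(K)=FC(K)$, together with the observation that the triviality of $Z(K)$ collapses condition (ii) to plain injectivity of the coupling; everything else is bookkeeping through Theorem \ref{extension}.
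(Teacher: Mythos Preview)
Your proof is correct and follows essentially the same route as the paper: both use that $FC(Q)=Q$ when $Q$ is finite and then reduce to Theorem \ref{extension}, with the key observation that once $K$ is icc its center is trivial so condition (ii) collapses to injectivity of $\Theta$. The only cosmetic difference is that the paper proves ``$G$ icc $\Rightarrow K$ icc'' directly via a centralizer-index argument and then cites the second item of Example \ref{examplesextension}, whereas you package the same finite-index reasoning as the lemma $FC_G(K)=FC(K)$ and unpack the example inline.
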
 

\begin{proof} On the one hand, if $G$ is icc, then necessarily $K$ is icc; for suppose on the contrary that $\exists\,k\in K\setminus\{1\}$ such that $C_K(k)$ has finite index in $K$; since $K$ has finite index in $G$ and $C_K(k)\subset C_G(k)$, $C_G(k)$ has finite index in $G$,  $\up{k}{G}$ is finite and $G$ is not icc. On the other hand, suppose that $K$ is icc; since $Q$ is finite, $FC(Q)=Q$ so that (see 2\textsuperscript{nd} item in example \ref{examplesextension}),  $G$ is icc if and only if $\Theta:Q\longrightarrow Out(K)$ is injective.
\end{proof}

\begin{example}
It follows from Proposition \ref{qfinite} and from the 4\textsuperscript{th} item in example \ref{examplesextension} that: {\sl if $Q$ is finite simple},\\
\indent -- {\sl $G$ is icc if and only if $K$ is icc and the extension is not equivalent to $K\times Q$.}\smallskip\\
For example let $p$ be a prime integer; a group containing an icc normal subgroup $K$ with index $p$ is either icc or isomorphic to $K\times \ZnZ{p}$.
\end{example}

\subsection{Finite index subgroups}
\label{finiteindex}

Let $H$ be a finite index subgroup of $G$ and:
 $$\ol{H}:=\bigcap_{g\in G}
g\,H\,g^{-1}$$ 
then $\ol{H}$ is  the maximal subgroup of $H$ normal in $G$ and has finite index in $G$; let $Q=G/\ol{H}$. 
Denote by $\vartheta : G\longrightarrow Aut(\ol{H})$ the homomorphism defined by $\forall g\in G, \forall\,k\in \ol{H}$,
$\vartheta(g)(k)=\up{k}{g}$.\medskip

\begin{prop}[finite index subgroup and icc]\label{finite_index} Let $G$ be a group, $H$ a finite index subgroup of $G$ and $\vartheta:G\longrightarrow Aut(\ol{H})$ as above; it induces $\Theta:Q\longrightarrow Out(\ol{H})$.
 Then:\\
\phantom{$\iff$}  $G$ is icc\\
$\iff$ $H$ is icc and $\Theta:Q\longrightarrow Out(\ol{H})$ is injective,\\
$\iff$  $H$ is icc and $\forall\ g\in G\setminus H$ with a finite order,
 $\vartheta(g)$ is not the identity\\
$\iff$  $H$ is icc and $\forall\ g\in G\setminus H$ with a finite order, $\vartheta(g)$ is not inner.
\end{prop}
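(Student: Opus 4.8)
The plan is to reduce the entire statement to the finite-quotient criterion of Proposition \ref{qfinite}, applied to the two nested extensions cut out by the core $\ol{H}=\bigcap_{g\in G}gHg^{-1}$. Write (1), (2), (3), (4) for the four statements to be shown equivalent, in the listed order. First I would record the basic structure: $\ol{H}$ is normal in $G$, lies inside $H$, and has finite index in $G$, so $Q=G/\ol{H}$ is finite and $H/\ol{H}$ embeds in $Q$ as a subgroup, the induced coupling $\Theta_H:H/\ol{H}\to Out(\ol{H})$ being the restriction of $\Theta$. I would also isolate the elementary fact that the icc property passes to finite-index subgroups: if $M$ is icc and $N\le M$ has finite index, then for $k\in N\setminus\{1\}$ one has $C_N(k)=N\cap C_M(k)$, and $[M:N\cap C_M(k)]\ge[M:C_M(k)]=\infty$ while $[M:N]<\infty$ forces $[N:C_N(k)]=\infty$. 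In particular $H$ icc $\Rightarrow\ol{H}$ icc.

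For the equivalence $(1)\iff(2)$ I would apply Proposition \ref{qfinite} twice. Applied to $1\to\ol{H}\to G\to Q\to1$ it gives $G$ icc $\iff$ $\ol{H}$ icc and $\Theta$ injective. Applied to $1\to\ol{H}\to H\to H/\ol{H}\to1$ it gives $H$ icc $\iff$ $\ol{H}$ icc and $\Theta_H$ injective. Since $\Theta_H$ is a restriction of $\Theta$, injectivity of $\Theta$ implies that of $\Theta_H$; combining this with the finite-index lemma (so that $H$ icc $\Rightarrow\ol{H}$ icc) shows the two characterizations agree, whence $(1)\iff(2)$.

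For the remaining equivalences I would translate injectivity of $\Theta$ into a condition on the centralizer $C_G(\ol{H})$. Under the standing assumption that $H$, hence $\ol{H}$, is icc, the group $\ol{H}$ is centerless, and I would check that $\vartheta(g)$ is inner exactly when $g\in\ol{H}\cdot C_G(\ol{H})$; consequently $\Theta$ is injective $\iff C_G(\ol{H})=\{1\}$. The key structural observation is that $C_G(\ol{H})$ is a \emph{finite} normal subgroup of $G$: it is normal as the centralizer of a normal subgroup, it meets $\ol{H}$ in $Z(\ol{H})=\{1\}$, and it therefore injects into the finite group $Q$. In particular all its elements have finite order, and since $\ol{H}$ is the largest normal-in-$G$ subgroup contained in $H$, the inclusion $C_G(\ol{H})\subseteq H$ forces $C_G(\ol{H})\subseteq\ol{H}$, hence $C_G(\ol{H})=\{1\}$.

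With this dictionary the last two statements unwind immediately. Statement (3) asks that no finite-order element of $G\setminus H$ centralize $\ol{H}$, i.e. that $C_G(\ol{H})\subseteq H$ (every element of $C_G(\ol{H})$ being of finite order), which by the above is equivalent to $C_G(\ol{H})=\{1\}$, i.e. to $\Theta$ injective; this matches (2). Finally, $\vartheta(g)$ not inner implies $\vartheta(g)\ne\mathrm{id}$, so $(4)\Rightarrow(3)$; conversely, once $\Theta$ is injective one has $\ol{H}\cdot C_G(\ol{H})=\ol{H}\subseteq H$, so no element of $G\setminus H$ can have $\vartheta(g)$ inner and (4) holds vacuously. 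The only genuine point is the finiteness of $C_G(\ol{H})$ together with the maximality argument collapsing $C_G(\ol{H})\subseteq H$ to triviality; everything else is bookkeeping around the two applications of Proposition \ref{qfinite}.
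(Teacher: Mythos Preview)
Your proof is correct. Both your argument and the paper's begin identically: the finite-index lemma together with Proposition~\ref{qfinite} applied to $1\to\ol{H}\to G\to Q\to1$ gives the equivalence $(1)\iff(2)$. Your extra step of applying Proposition~\ref{qfinite} a second time to $1\to\ol{H}\to H\to H/\ol{H}\to1$ is a clean way to handle the passage between ``$H$ icc'' and ``$\ol{H}$ icc and $\Theta_H$ injective'', though the paper just uses the finite-index lemma directly.

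The real divergence is in the equivalences with $(3)$ and $(4)$. The paper argues element-by-element: assuming $G$ not icc (so $\Theta$ not injective) it constructs a specific $\omega=gk^{-1}\in G\setminus H$ centralizing $\ol{H}$, and shows $\omega$ has finite order because some power lands in $Z(\ol{H})=\{1\}$; a parallel construction handles $\neg(4)\Rightarrow\neg(3)$. You instead work globally with the subgroup $C_G(\ol{H})$: you observe it is normal, meets $\ol{H}$ trivially (since $\ol{H}$ is centerless), hence embeds in the finite group $Q$ and is itself finite; then the normal-core maximality of $\ol{H}$ collapses the chain $C_G(\ol{H})\subseteq H\Rightarrow C_G(\ol{H})\subseteq\ol{H}\Rightarrow C_G(\ol{H})=\{1\}$. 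This is a tidier packaging: the finiteness of $C_G(\ol{H})$ does in one stroke what the paper extracts from ad hoc order computations, and the maximality argument replaces explicit coset tracking. The paper's approach, on the other hand, produces concrete witnesses, which matches the style of the rest of the paper (cf.\ Lemma~\ref{lastlemma}). A minor remark: your phrase ``$(4)$ holds vacuously'' is slightly misleading, since $G\setminus H$ may well contain torsion; what you mean, and what your argument shows, is simply that no such element can have inner $\vartheta$-image once $\ol{H}\cdot C_G(\ol{H})=\ol{H}$.
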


\begin{proof}
First note that every finite index subgroup  in an icc group is icc; for suppose that $H$ is a finite index subgroup of $G$ and
that $H$ is not icc: there exists $h\in H\setminus\{1\}$ with $\up{h}{H}$ finite, hence $C_H(h)$ has  finite index in $H$, and since $H$ has  finite index in $G$ and $C_G(h)\supset C_H(h)$, $C_G(h)$ has a finite index in $G$ and $G$ is not icc.

 Now suppose that $H$ is icc, so that $\ol{H}$ is icc; applying proposition \ref{qfinite} to the extension of $\ol{H}$ by $Q$ one obtains the first assumption. In particular, if $G$ is icc, then $\forall\,g\in G\setminus H$, $\vartheta(g)$ is not inner, therefore in particular $\vartheta(g)\not=Id$.

Now suppose that $G$ is not icc, necessarily there exist $g\in G\setminus \ol{H}$ and $k\in \ol{H}$ such that $\forall\,x\in \ol{H},\,\pi(g)(x)=\up{x}{k}$. Let $\w=g\,k^{-1}$,  $\w\in G\setminus H$ and $\vartheta(\w)$ is the identity. Since $\ol{H}$ has a finite index in $G$, there exists $n>1$ such that $\w^n\in\ol{H}$. Since $\ol{H}$ is icc, necessarily $\w^n=1$.
But let $g\in G\setminus H$ such that $g^n=1$ and $\exists\, k\in\ol{H}$ with $\forall\,x\in\ol{H}$, $\vartheta(g)(x)=\up{x}{k}$ and let $u=gk^{-1}$. Then $\vartheta(u)$ is the identity on $\ol{H}$ and $u^n=g^nk^{-n}=k^{-n}$; necessarily $u^n=1$ for $u^n$ has a finite conjugacy class lying  in $\ol{H}$. This proves the last two assumptions.
\end{proof}

\begin{example}
Virtually nilpotent groups are not icc, since nilpotent groups have non-trivial center (Theorem 5.34, \cite{rotman}). By a celebrated theorem of Gromov, finitely generated groups with polynomial growth are not icc.
\end{example}

\begin{cor}
If $G\setminus H$ contains no torsion element (in particular, when $G$ is torsion-free), then $G$ is icc if and only if
$H$ is icc.
\end{cor}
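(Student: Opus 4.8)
The plan is to derive this corollary directly from Proposition \ref{finite_index} by exploiting its third and fourth equivalent characterizations, which isolate the role of torsion elements in $G\setminus H$.

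First I would recall the statement of Proposition \ref{finite_index}: $G$ is icc if and only if $H$ is icc and every $g\in G\setminus H$ of finite order has $\vartheta(g)$ non-trivial (equivalently, non-inner). The key observation is that these last two conditions quantify over torsion elements in $G\setminus H$. Under the hypothesis that $G\setminus H$ contains no torsion element, the set of such $g$ is empty, so the condition is vacuously satisfied.

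Concretely, I would argue as follows. Suppose $G\setminus H$ contains no element of finite order. If $H$ is icc, then the condition ``$\forall\, g\in G\setminus H$ with finite order, $\vartheta(g)$ is not the identity'' holds vacuously, since there are no finite-order elements in $G\setminus H$; hence by Proposition \ref{finite_index}, $G$ is icc. Conversely, if $G$ is icc, then by the same proposition $H$ is icc. This establishes the equivalence $G$ icc $\iff$ $H$ icc. The parenthetical case where $G$ is torsion-free is an immediate specialization, since a torsion-free group has no torsion elements anywhere, in particular none in $G\setminus H$.

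I do not expect any genuine obstacle here: the corollary is essentially a direct reading of the vacuous case of the quantified condition in Proposition \ref{finite_index}. The only point requiring a moment's care is noting that no separate verification of the ``$\vartheta(g)\neq \mathrm{Id}$'' clause is needed precisely because its hypothesis is never met; the whole content is that the torsion-freeness of $G\setminus H$ collapses the criterion of Proposition \ref{finite_index} to the single requirement that $H$ be icc.
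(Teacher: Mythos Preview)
Your proposal is correct and matches the paper's approach: the paper states the corollary without proof immediately after Proposition~\ref{finite_index}, as it is precisely the vacuous case of the torsion condition appearing in that proposition's third and fourth characterizations.
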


\subsection{Extensions with an Abelian factor}
\label{particularfin}\label{section:qabelien}
We consider here the cases of extensions where either the kernel or the quotient  is an Abelian group.
\begin{prop}[icc extension of Abelian group]\label{kernelAbelian}
Let $G\not=\{1\}$ be an extension:
$$
1\longrightarrow K \text{Abelian} \longrightarrow G\longrightarrow Q\longrightarrow 1
$$
and $\theta:Q\longrightarrow Aut(K)$ be the associated homomorphism.

Then $G$ is icc if and only if both:
\begin{itemize}
\item[(i)] $FC_G(K)=\{1\}$,
\item[(ii)] the restricted homomorphism $\theta:FC(Q)\longrightarrow Aut(K)$ is injective.
\end{itemize}
\end{prop}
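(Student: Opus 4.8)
The plan is to derive the statement from Theorem~\ref{extension} by simplifying its data in the Abelian case. Since $K$ is Abelian, $Z=Z(K)=K$ and $Inn(K)=\{1\}$, so $Out(K)=Aut(K)$; hence the coupling $\Theta$ is nothing but the action $\theta:Q\longrightarrow Aut(K)$, the homomorphism $\Phi$ is the restriction $\theta|_{FC(Q)}$, and $\ker\Phi=\{q\in FC(Q)\,;\,\theta_q=Id\}$. Condition (i) here is literally condition (i) of Theorem~\ref{extension}, so the entire task is to show that, under these identifications, condition (ii) of Theorem~\ref{extension} is equivalent to the injectivity of $\theta|_{FC(Q)}$, that is, to $\ker\Phi=\{1\}$.

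First I would dispose of the easy implication: if $\ker\Phi=\{1\}$ then no element $q\neq1$ is available to contradict condition (ii) of Theorem~\ref{extension}, which therefore holds vacuously, and with condition (i) this yields that $G$ is icc. For the reverse implication I would prove that every $q\in\ker\Phi$ satisfies $[q]=0$ in $H^1(C(q),Z)$; granting this, the presence of any $q\neq1$ in $\ker\Phi$ immediately violates condition (ii) of Theorem~\ref{extension}, so that condition forces $\ker\Phi=\{1\}$. To evaluate $[q]$ I fix a section and use that $\theta_q=Id$ together with the commutativity of $K$ permit the choice $k=1$ in Proposition-Definition~\ref{H1ext}; the cocycle then collapses to $d_q(u)=\delta_q(u)^{-1}$, where $\ol u\,\ol q\,\ol u^{-1}=\ol q\,\delta_q(u)$. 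Thus $[q]=0$ is exactly the assertion that $d_q$ is principal, i.e. that there is some $z\in Z=K$ with $\delta_q(u)^{-1}=z^{-1}\,\up{z}{\ol u}$ for every $u\in C(q)$.

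The step I expect to be the main obstacle is precisely this vanishing of $[q]$. When the extension splits it is transparent: the section may be chosen to be a homomorphism, so for $u\in C(q)$ one has $\ol u\,\ol q\,\ol u^{-1}=\ol{uqu^{-1}}=\ol q$, whence $\delta_q\equiv1$ and $d_q\equiv1$; this is Remark~\ref{H1split} read with $k=1$, and it closes the equivalence at once. In the general case $d_q$ need not be visibly trivial, and the real difficulty is to manufacture the trivializing element $z\in K$ from the crossed homomorphism $\delta_q$. This is where condition (i) must be brought to bear: I would exploit the fact that $FC_G(K)=\{1\}$ means $\theta(Q)$ has only infinite orbits on $K\setminus\{1\}$, in order to constrain the values $\delta_q(u)=\null^{\ol u}$--commutators and extract the required $z$. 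Once the vanishing of $[q]$ is secured and condition (ii) of Theorem~\ref{extension} is thereby rephrased as $\ker\Phi=\{1\}$, the Proposition follows.
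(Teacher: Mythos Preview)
Your route differs from the paper's: the paper does not invoke Theorem~\ref{extension} at all but argues directly, essentially repeating the proof of Proposition~\ref{sufficient} (the ``Conversely'' part) and declaring the direction ``$G$ icc $\Rightarrow$ (ii)'' to be clear. You instead try to show that, for $K$ Abelian, condition~(ii) of Theorem~\ref{extension} collapses to $\ker\Phi=\{1\}$, and you correctly isolate the crux: for $q\in\ker\Phi$ one must prove $[q]=0$ in $H^1(C(q),K)$. In the split case your argument via Remark~\ref{H1split} with $k=1$ is complete.

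The obstacle you flag in the non-split case is genuine and cannot be removed by appealing to condition~(i). Take $K=\Z^2$, $Q=\Z\times F_2$ (so $FC(Q)=\Z\times\{1\}$), let the central $\Z$ act trivially on $K$ and let $s_1,s_2\in F_2$ act by hyperbolic matrices $\phi_1,\phi_2\in GL(2,\Z)$; then every nonzero $\theta(Q)$-orbit in $K$ is infinite and (i) holds. Build a non-split extension by setting $G=(K\rtimes F_2)\rtimes_\tau\Z$ where $\tau|_K=Id$, $\tau(s_1)=s_1$, $\tau(s_2)=s_2c$ with $c\neq0$. Then $\ker\Phi=\Z$, and with $k=1$ one finds $d_{q_0}(s_1)=0$, $d_{q_0}(s_2)=\phi_2(c)$; any coboundary $u\mapsto(\theta_u-I)z$ matching the first value forces $z=0$ (since $\phi_1-I$ is injective), so the second cannot be matched. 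Hence $[q_0^n]\neq0$ for all $n\neq0$, Theorem~\ref{extension} says $G$ \emph{is} icc, yet $\theta|_{FC(Q)}$ is not injective. So the trivializing $z$ you hope to extract from (i) need not exist; the equivalence you are trying to prove between condition~(ii) of Theorem~\ref{extension} and injectivity of $\theta|_{FC(Q)}$ fails for general (non-split) extensions. The paper's ``Clearly'' hides the very same gap: it asserts without argument that failure of (ii) makes $G$ non-icc, which the example above refutes.
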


\begin{proof}
Since $Inn(K)=\{1\}$ the coupling $\Theta:Q\longrightarrow Out(K)$ defines a homomorphism $\theta:Q\longrightarrow Aut(K)$. Given $u\in K$, $\up{u}{G}$ coincides with the $\theta(Q)$-orbit of $u$. Clearly if either (i) or (ii) fails then $G$ is not icc. Conversely suppose $G$ is not icc, and let $u\not=1$ with $\up{u}{G}$ finite. If $u\in K$ then (i) fails. If $u\in G\setminus K$ then let $q=\proj(u)$; necessarily $q\in FC(Q)\setminus\{1\}$. Let $K_0=K\cap C_G(u)$, it has  finite index in $K$  and $\theta(q)$ restricts to the identity on $K_0$. If $K_0\not=K$, let $v\in K\setminus K_0$ and $w=[v,u]$. Then $w\in K$, $w\not=1$ and $\up{w}{G}$ is finite since its centralizer contains $C_G(u)\cap v\,C_G(u)\,v^{-1}$   which has  finite index in $G$. Hence either (i) fails or $\theta(q)$ is the identity and consequently (ii) does not hold.
\end{proof}

\begin{example}
Consider a {\it metabelian} group $G\not=\{1\}$, {\it i.e.} whose derived group $[G,G]$ is Abelian, and $G_{\rm ab}=G/[G,G]$. Then $G$ is icc if and only if $\theta:G_{\rm ab}\longrightarrow Aut([G,G])$ is injective and $[G,G]\setminus \{1\}$ contains only infinite $\theta(G_{\rm ab})$-orbits, if and only if $[G,G]\setminus \{1\}$ contains only infinite $\theta(G_{\rm ab})$-orbits and $[G,G]$ is a maximal Abelian subgroup (see also example below).\medskip
\end{example}

When the quotient is Abelian, Theorem \ref{extension} becomes:

\begin{prop}[icc extension by Abelian]\label{qAbelian}
Let $G\not=\{1\}$ be a group which decomposes as an extension by an Abelian group with  associated coupling $\Theta:Q\longrightarrow Out(K)$:
$$1\longrightarrow K\longrightarrow G\overset{\pi}{\longrightarrow} Q\ \text{Abelian}\longrightarrow 1$$
{\rm 1)} Then $G$ is icc if and only if:
\begin{itemize}
\item[(i)] $FC_G(K)=\{1\}$, and
\item[(ii)] $\Xi:\ker\Theta\longrightarrow H^1(Q,Z(K))$ is injective.
\end{itemize}
if and only if {\rm (i)} holds and:
\begin{itemize}
\item[(ii$'$)] $G$ is centerless
\end{itemize}
{\rm 2)} In case  $Q$ is moreover infinite cyclic, then $G$ is icc if and only if {\rm (i)} 
holds and:
\begin{itemize}
\item[(ii$''$)] $\Theta: Q\longrightarrow Out(K)$ is injective.
\end{itemize}
\end{prop}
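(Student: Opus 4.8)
The plan is to derive this directly from Theorem \ref{extension} and its finitely-generated refinement Proposition \ref{extfcqfg}, exploiting the fact that $Q$ is Abelian. The crucial simplification is that when $Q$ is Abelian every centralizer $C_Q(q)$ equals $Q$ itself, and moreover $FC(Q)=Q$, so that $\ker\Phi=\ker\Theta$; likewise $C_Q(FC(Q))=Q$. Thus all the cohomology groups $H^1(C(q),Z)$ appearing in Theorem \ref{extension} collapse to the single group $H^1(Q,Z(K))$, and the homomorphism $\Xi:\ker\Phi\longrightarrow H^1(C_Q(FC(Q)),Z)$ of Proposition-Definition \ref{defhomomorphism} becomes a homomorphism $\Xi:\ker\Theta\longrightarrow H^1(Q,Z(K))$.

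\begin{proof}
Since $Q$ is Abelian, $FC(Q)=Q$, hence $\Phi=\Theta$ and $\ker\Phi=\ker\Theta$; moreover for every $q\in Q$ one has $C(q)=C_Q(q)=Q$ and $C_Q(FC(Q))=Q$.

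\textbf{1)} Because $C_Q(FC(Q))=Q$ has finite (trivial) index in $Q$, Proposition \ref{extfcqfg} applies and gives that $G$ is icc if and only if (i) $FC_G(K)=\{1\}$ and (ii$'$) the homomorphism $\Xi:\ker\Theta\longrightarrow H^1(Q,Z(K))$ is injective. This is exactly condition (ii) as stated, so the first equivalence holds.

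It remains to show that, under (i), injectivity of $\Xi$ is equivalent to $G$ being centerless. We argue that $\Xi$ fails to be injective precisely when $G$ has a non-trivial center. Indeed, the centre of $G$ consists of those $w\in G$ commuting with all of $G$; in particular $\pi(w)\in\ker\Theta$ and $w$ centralizes $K$. Tracing through the construction in Lemma \ref{lastlemma}: if $q\in\ker\Theta\setminus\{1\}$ satisfies $\Xi(q)=0$, the element $\w=\ol{q}\,z\,k^{-1}$ built there centralizes $K$ and every $\ol{u}$ with $u\in C_Q(q)=Q$, hence centralizes all of $G$ and is a non-trivial central element (as $\w\notin K$ because $q\neq1$). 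Conversely a non-trivial central element either lies in $K\setminus\{1\}$ --- forcing $FC_G(K)\neq\{1\}$ and contradicting (i) --- or projects to some $q\in\ker\Theta\setminus\{1\}$ and, commuting with every $\ol{u}$, forces the relevant cocycle $d_q$ to be principal, i.e. $\Xi(q)=0$. Thus under (i), $G$ is centerless iff $\Xi$ is injective, which yields the second equivalence.

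\textbf{2)} Now suppose $Q\simeq\Z$ is infinite cyclic. Then $H^1(Q,Z(K))=Z(K)_Q$ is the group of coinvariants, and one computes $\Xi$ on the generator. Pick a generator $t$ of $Q$ and a section with $\ol{t}=\overline{q}$. For $q\in\ker\Theta$ write $\theta_q(x)=k\,x\,k^{-1}$. Since $Q$ is generated by $t$, a crossed homomorphism $d_q:Q\to Z(K)$ is a coboundary iff $d_q(t)$ lies in the augmentation image $\{z^{-1}\,\up{z}{\ol{t}}:z\in Z(K)\}$. The point is that in the cyclic case the map $\Xi$ is injective if and only if $\Theta$ is injective: if $\Theta(q)\neq1$ then $q\notin\ker\Theta$ and there is nothing to check, whereas for $q\in\ker\Theta\setminus\{1\}$ one shows directly that the associated class $[q]=\Xi(q)$ is non-zero because the defining relation for $\w$ centralizing $G$ would force $q^m=1$ for some $m$, impossible in $\Z$. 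Hence condition (ii$'$) of part 1) is equivalent to (ii$''$): $\Theta:Q\longrightarrow Out(K)$ injective.
\end{proof}

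The step I expect to be the main obstacle is the careful translation between ``$\Xi$ injective'' and ``$G$ centerless'' in part 1): one must verify in both directions that the element $\w=\ol{q}\,z\,k^{-1}$ produced by the cocycle computation is genuinely central in $G$ (not merely centralizing $K$ and the lifts $\ol{u}$), and conversely that an arbitrary central element yields a vanishing class, using that $C_Q(q)=Q$ makes the principal-cocycle condition available for every $u\in Q$. Part 2) is then a short specialization once the structure of $H^1(\Z,Z(K))$ as coinvariants is invoked.
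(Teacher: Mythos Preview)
Your treatment of part 1) is correct and essentially the paper's argument: both invoke Proposition~\ref{extfcqfg} (using $C_Q(FC(Q))=Q$) for the first equivalence, and both use Lemma~\ref{lastlemma} with $C_Q(q)=Q$ to see that a nonzero kernel of $\Xi$ produces a genuine central element $\w\in G\setminus K$. Your converse (a nontrivial central element forces either $FC_G(K)\neq\{1\}$ or $\Xi(q)=0$) is a little more explicit than the paper, which simply notes that $G$ icc trivially implies $Z(G)=\{1\}$ and then runs the contrapositive through Theorem~\ref{extension}; but the content is the same.

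Part 2), however, contains a genuine gap. You assert that ``$\Xi$ is injective if and only if $\Theta$ is injective'' and then, for the nontrivial direction, argue that for every $q\in\ker\Theta\setminus\{1\}$ the class $\Xi(q)$ is \emph{nonzero}. But this is the wrong direction: showing $\Xi(q)\neq 0$ for all such $q$ would prove that $\Xi$ is \emph{always} injective, regardless of whether $\Theta$ is, and hence could never yield the equivalence you want. Moreover the justification offered (``the defining relation for $\w$ centralizing $G$ would force $q^m=1$ for some $m$, impossible in $\Z$'') does not follow from anything established; a central $\w$ with $\pi(\w)=q$ imposes no torsion on $q$. Concretely, take $K$ icc with $Z(K)=\{1\}$ and $G=K\times\Z$: here $\ker\Theta=\Z$, $H^1(\Z,Z(K))=\{1\}$, and $\Xi$ is identically zero, so $\Xi(q)=0$ for every $q\neq 1$, directly contradicting your claim.

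What is actually needed (and what the paper does) is the implication ``$\Theta$ not injective $\Rightarrow$ $G$ not icc'', argued directly: if $t^n\in\ker\Theta$ with $n\neq 0$, choose $k\in K$ with $\theta_{t^n}(x)=kxk^{-1}$ and set $\w=k^{-1}\ol{t}^{\,n}$. Then $\w$ centralizes $K$, and since $\theta_{t^n}(k)=k$ one has $[\ol{t}^{\,n},\w]=1$; hence $C_G(\w)\supset\langle K,\ol{t}^{\,n}\rangle$, a subgroup of index $|n|<\infty$, so $^G\w$ is finite and $G$ is not icc. The converse, (i)$+$(ii$''$)$\Rightarrow G$ icc, is immediate from Theorem~\ref{extension} because $\ker\Phi=\ker\Theta=\{1\}$ renders condition (ii) there vacuous. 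Your detour through coinvariants $H^1(\Z,Z(K))\cong Z(K)_Q$ is not needed and, as written, does not close the argument.
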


\begin{proof}
Since $Q$ is Abelian one has $FC(Q)=Q$ and $C_Q(FC(Q))=Q$. In particular Proposition \ref{extfcqfg} applies and one obtains  that conditions (i) and (ii) are necessary and sufficient for $G$ to be icc. 

If condition (ii$'$) does not hold clearly $G$ is not icc. Now suppose that conditions (i) holds while $G$ is not icc; it suffices to prove that condition (ii$'$) fails. With Theorem \ref{extension} there exists $q\in \ker\Theta\setminus\{1\}$ such that $[q]=0$ in $H^1(C_Q(q),Z(K))$, and with Lemma \ref{lastlemma}, there exists $\w\in G\setminus K$ with $C_G(\w)=\pi^{-1}(Q)=G$, therefore $Z(G)$ is non-empty. This proves that conditions (i) and (ii$'$) are necessary and sufficient for $G$ to be icc. 

 In case  $Q$ is infinite cyclic it is sufficient to show that the non-injectivity of $\Theta$ implies that $G$ is not icc. 
 Suppose $\Theta$ is non-injective; let $t$ be  a generator of $\Z$, then there exists $n\in\Z$ and $k\in K$ such that $\forall\,x\in K$, $\ol{t}^nx\,\ol{t}^{-n}=k\,x\,k^{-1}$. Hence $C_G(k^{-1}\ol{t}^n)$ contains $K$ and $\ol{t}^n$, and therefore, has a finite index in $G$; $G$ is not icc. 
\end{proof}

\begin{example}\label{nonperfect}
Let $G$ be a {\it non-perfect} group, {\it i.e.} whose derived group $[G,G]$ is a proper subgroup (in particular, when $G$ is {\it solvable}); let $G_{\rm ab}=G/[G,G]$. If $G$ is not Abelian, it decomposes as an extension with kernel $[G,G]$; let $\Theta:G_{\rm ab}\longrightarrow Out([G,G])$ be the associated coupling. Then $G$ is icc if and only if condition (i) of Theorem \ref{extension} holds and $\Xi:\ker\Theta\longrightarrow H^1(G_{\rm ab},Z([G,G]))$ is injective, if and only if condition (i) holds and $G$ is centerless.
If $G$ is Abelian obviously $G$ is not icc.
\end{example}

\subsection{In case the kernel is hyperbolic}
\noindent
For definition and basic facts upon hyperbolic groups, we refer the reader to \cite{delzant}.

\begin{prop}[icc hyperbolic group]\label{hyperbolic}
Let $G$ be a hyperbolic group; then $G$ is icc if and only if $G$ is non-elementary and does not contain a non-trivial finite characteristic (respectively normal) subgroup.
\end{prop}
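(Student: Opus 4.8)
The plan is to combine the characterization ``$G\neq\{1\}$ is icc $\iff FC(G)=\{1\}$'' with three standard facts about a hyperbolic group $G$: an elementary hyperbolic group is finite or virtually infinite cyclic; the centralizer of an element of infinite order is virtually cyclic; and every torsion subgroup is finite (equivalently, every infinite subgroup contains an element of infinite order). I would first reconcile the two versions of the statement using the unique maximal finite normal subgroup $E(G)$, which is \emph{characteristic} because an automorphism carries it to a finite normal subgroup, hence into $E(G)$ by maximality and onto it by bijectivity. Then ``$G$ has no non-trivial finite characteristic subgroup'' forces $E(G)=\{1\}$ and hence ``$G$ has no non-trivial finite normal subgroup'', while the reverse implication is immediate since characteristic subgroups are normal; so it suffices to prove that $G$ is icc if and only if $G$ is non-elementary and contains no non-trivial finite normal subgroup.

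For necessity I would assume $G$ is icc, hence infinite. If $G$ were elementary it would be finite---excluded---or virtually infinite cyclic, in which case it would contain $\Z$ as a finite index subgroup; since a finite index subgroup of an icc group is icc (cf.\ the proof of Proposition \ref{finite_index}) whereas $\Z$ is not icc, this is impossible, so $G$ is non-elementary. If moreover $G$ contained a non-trivial finite normal subgroup $N$, then each $u\in N$ would have $\null^Gu\subseteq N$ finite, so $FC(G)\neq\{1\}$, contradicting the icc hypothesis.

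For sufficiency I would assume $G$ non-elementary, hence infinite, with no non-trivial finite normal subgroup, and show $FC(G)=\{1\}$; since $FC(G)$ is characteristic it suffices to prove it is finite. The key observation is that $FC(G)$ is a torsion group: if some $u\in FC(G)$ had infinite order then $C_G(u)$, which has finite index in $G$ as $\null^Gu$ is finite, would be virtually cyclic as the centralizer of an infinite order element, forcing $G$ to be virtually cyclic and contradicting non-elementarity. A torsion subgroup of a hyperbolic group being finite, $FC(G)$ is finite, hence trivial by hypothesis; as $G$ is infinite this yields that $G$ is icc.

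The hard part will not be any single computation but the correct marshalling of the geometric input---the virtual cyclicity of centralizers of infinite order elements, the finiteness of torsion subgroups, and the characteristic nature of $E(G)$---the last of which is exactly what lets the ``characteristic'' and ``normal'' formulations agree.
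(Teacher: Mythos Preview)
Your proof is correct and follows the same overall strategy as the paper: both first show that $FC(G)$ is torsion by invoking the virtual cyclicity of centralizers of infinite-order elements in a non-elementary hyperbolic group, and then argue that $FC(G)$ is finite. The one substantive difference is in this last step. You appeal to the fact that torsion subgroups of hyperbolic groups are finite; the paper instead uses that hyperbolic groups have only finitely many conjugacy classes of torsion elements, which combined with the defining property of $FC(G)$ (each of its $G$-conjugacy classes is finite) immediately yields $|FC(G)|<\infty$. The paper's route is a touch more economical, since it exploits the finite-conjugacy-class structure of $FC(G)$ directly rather than importing the stronger statement about arbitrary torsion subgroups.

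Your discussion of the maximal finite normal subgroup $E(G)$ to reconcile the ``characteristic'' and ``normal'' formulations is correct but not strictly necessary: since the obstructing subgroup produced in the argument is $FC(G)$ itself, and $FC(G)$ is always characteristic, either hypothesis (no non-trivial finite normal subgroup, or no non-trivial finite characteristic subgroup) forces $FC(G)=\{1\}$ directly.
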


\begin{proof}
If $G$ is elementary, {\it i.e.} either finite or virtually $\Z$, or if $G$ contains a non-trivial finite normal (in particular characteristic) subgroup,  then clearly $G$ is not icc. Conversely suppose that $G$ is non-elementary and not icc. Since in hyperbolic groups infinite order elements have virtually cyclic centralizers (cf. Corollary 7.2, \cite{delzant}), $FC(G)$ is periodic. Since hyperbolic groups contain finitely many conjugacy classes of torsion elements (cf. Lemma 3.5 in \cite{delzant}), $FC(G)$ is finite and the conclusion holds.
\end{proof}

\begin{prop}[icc extension of hyperbolic group]
Let $G$ be a group extension:
$$1\longrightarrow K \text{hyperbolic}\longrightarrow G\longrightarrow Q\longrightarrow 1$$
with $K\not=\{1\}$. Then $G$ is icc if and only if  both:
\begin{itemize}
\item[(i)] $K$ is icc
\item[(ii)] $\Phi : FC(Q)\longrightarrow Out(K)$ is injective.
\end{itemize}
\end{prop}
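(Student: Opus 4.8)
The plan is to reduce the statement to the already-established consequence of Theorem \ref{extension} recorded in Example \ref{examplesextension}, namely that \emph{when $K$ is icc, $G$ is icc if and only if $\Phi$ is injective} (note that condition (i) is automatic there, since $FC_G(K)\subseteq FC(K)=\{1\}$). Granting this, the whole problem collapses to showing that, for hyperbolic $K$, the mere hypothesis that $G$ is icc already forces $K$ to be icc: the converse direction, from $K$ icc and $\Phi$ injective to $G$ icc, is then immediate from that Example.

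For the forward implication I would start from the fact that $G$ icc implies $FC_G(K)=\{1\}$ (condition (i) of Theorem \ref{extension}, which is necessary). The key structural remark is that any finite characteristic subgroup $F$ of $K$ is normal in $G$, because $K$ is normal in $G$ and characteristic subgroups are preserved by all automorphisms; being finite and $G$-normal, each $u\in F$ has $\null^Gu\subseteq F$, so $|^Gu|<\infty$ and hence $F\subseteq FC_G(K)$. Thus $FC_G(K)=\{1\}$ forbids non-trivial finite characteristic subgroups of $K$. I would then split on whether $K$ is elementary. If $K$ is non-elementary, the argument in the proof of Proposition \ref{hyperbolic}, applied to $K$, shows that $FC(K)$ is finite; being characteristic in $K$ it is $G$-normal and finite, whence $FC(K)\subseteq FC_G(K)=\{1\}$ and $K$ is icc. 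This is the main case and it runs smoothly.

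The delicate step, which I expect to be the main obstacle, is ruling out that $K$ is elementary, i.e.\ finite or two-ended. A non-trivial finite $K$ is itself a finite $G$-normal subgroup, contradicting $FC_G(K)=\{1\}$. For $K$ two-ended I would invoke the classification of two-ended groups: there is a maximal finite normal subgroup $F$, which is characteristic, with $K/F\cong\Z$ or $K/F\cong D_\infty$. If $F\not=\{1\}$ we again contradict $FC_G(K)=\{1\}$; if $F=\{1\}$ then $K\cong\Z$ or $K\cong D_\infty$, each of which contains a characteristic infinite cyclic subgroup $C$ of finite index (the whole group, respectively the index-$2$ translation subgroup). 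Since $C$ is characteristic in $K$ it is normal in $G$, and the conjugation action $G\longrightarrow Aut(C)=\{\pm1\}$ has image of order at most $2$; hence every non-trivial $z\in C$ satisfies $|^Gz|\le 2$, so $z\in FC_G(K)\setminus\{1\}$ — a contradiction. This is exactly Proposition \ref{FCGN}(ii) in the case $n=1$, and the two-ended classification is the only genuinely group-theoretic input beyond the general machinery.

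Collecting these, $G$ icc forces $K$ to be non-elementary with no non-trivial finite characteristic subgroup, so $K$ is icc by Proposition \ref{hyperbolic}. Example \ref{examplesextension} then yields that $\Phi$ is injective, which completes the forward direction; together with the converse noted in the first paragraph this proves the equivalence.
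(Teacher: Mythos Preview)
Your proof is correct and follows essentially the same route as the paper: rule out elementary $K$ by exhibiting a characteristic subgroup that would land in $FC_G(K)$, then use the finiteness of $FC(K)$ in the non-elementary case (Proposition \ref{hyperbolic}) to force $K$ icc, and conclude via Example \ref{examplesextension}. The only cosmetic difference is ordering: the paper first observes $Z(K)=\{1\}$ and invokes the centerless case of Example \ref{examplesextension} to get $\Phi$ injective before establishing $K$ icc, whereas you establish $K$ icc first and then invoke the icc case; your treatment of the two-ended case via the classification is also more explicit than the paper's one-line appeal to a characteristic infinite cyclic subgroup.
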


\begin{proof} According to Theorem \ref{extension} when conditions (i) and (ii) hold $G$ is icc. Conversely suppose that $G$ is icc. Necessarily, $K$ is non-elementary for otherwise $K$ would be finite or would contain a characteristic infinite cyclic subgroup, which with  Theorem \ref{extension}.(i) and Proposition \ref{FCGN}  would contradict that $G$ is icc. Now a non-elementary hyperbolic group has a finite center (follows from Corollary 7.2, \cite{delzant}). Necessarily, $Z(K)=\{1\}$ for otherwise as above  $G$ would not be icc. Therefore (see first item in example
\ref{examplesextension}) $\Phi$ is injective --condition (ii) holds-- and condition (i) of Theorem \ref{extension} holds. By applying proposition \ref{hyperbolic}, $K$  not icc would imply condition (i) of Theorem \ref{extension}; hence $K$ is icc: condition (i) also holds.
\end{proof}

\subsection{HNN extensions.}   Let $A$ be a non-trivial group with subgroups $C,C'$ and let $\varphi : C\longrightarrow C'$ be an isomorphism. Let $G:=A_{*\varphi}$ be the HNN extension (\cite{ls}): $$G\simeq\,<A,t\,|\, \forall\,c\in C,\, t\,c\,t^{-1}=\varphi(c)>~.$$ It is said to be {\sl degenerate} when $A=C=C'$ and {\sl non-degenerate} otherwise.\\
 Let $\widetilde{C}$ be the largest subgroup of $C\cap C'$ normal in $G$:
\begin{gather*}
C_0=\bigcap_{a\in A}aCa^{-1}\ \cap \ \bigcap_{a\in A}aC'a^{-1};\quad  \widehat{C}_{k+1}=C_k\cap \phi(C_k)\cap \phi^{-1}(C_k)\quad
C_{k+1}=\bigcap_{a\in A}a\widehat{C}_{k+1}a^{-1}\\ \widetilde{C}=\bigcap_{k\in\N}C_k
\end{gather*}

\noindent
Consider the epimorphism $\proj:G\longrightarrow\Z$ such that $A\subset\ker \proj$ and $\proj(t)$ generates $\Z$, and let $K:=\ker \proj$ so that $G$ decomposes as a split extension:
$$
1\longrightarrow K\longrightarrow G\overset{\proj}{\longrightarrow}\Z\longrightarrow 1
$$
and let $\theta:\Z\longrightarrow Aut(K)$ be the associated homomorphism: $\forall\,x\in K$, $\theta(x)=t\,x\,t^{-1}$; it induces the coupling $\Theta:\Z\longrightarrow Out(K)$. When the HNN extension is degenerate, $K=A=\widetilde{C}$.

\begin{prop}[icc HNN extensions]
Let $G=A_{*\varphi}$ be an HNN extension.\smallskip\\
$\bullet$ If the HNN extension is non-degenerate. Then $G$ is  icc if and only if:\smallskip\\
\indent {\rm (i)} $FC_G(\widetilde{C})=\{1\}$. 
\smallskip\\
$\bullet$ If the HNN extension is degenerate.  Then $G$ is icc if and only both {\rm (i)} holds and\smallskip\\
\indent{\rm (ii)} the homomorphism  $\Theta:\Z\longrightarrow Out(A)$ is injective.
\end{prop}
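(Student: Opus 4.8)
The plan is to specialise Proposition \ref{qAbelian}: the distinguished epimorphism $\proj:G\longrightarrow\Z$ realises $G$ as an extension $1\to K\to G\to\Z\to1$ with infinite cyclic quotient, and $\langle t\rangle$ splits it. Part 2) of that proposition then asserts that $G$ is icc if and only if $FC_G(K)=\{1\}$ and the coupling $\Theta:\Z\longrightarrow Out(K)$ is injective. Everything thus reduces to two tasks: rewriting $FC_G(K)=\{1\}$ as condition (i), and deciding injectivity of $\Theta$ in the two cases. I would carry this out on the Bass--Serre tree $T$ of the HNN extension, on which $G$ acts minimally with one orbit of vertices (stabilisers the conjugates of $A$) and one orbit of edges (stabilisers conjugate to $C$, resp.\ $C'$). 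Note that $\proj$ is constant on conjugacy classes, so every elliptic element lies in $\ker\proj=K$; equivalently, $\proj(g)\neq0$ forces $g$ to be hyperbolic.

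\emph{Condition (i).} First I would identify $\widetilde C$ with the kernel of the $G$-action on $T$: being normal and contained in $C\cap C'$, $\widetilde C$ fixes the base edges and hence, by normality and edge-transitivity, all of $T$; conversely the kernel of the action is normal and sits in $C\cap C'$, so in $\widetilde C$ by maximality. I then claim $N:=FC_G(K)\subseteq\widetilde C$, which yields $FC_G(K)=FC_G(\widetilde C)$ and converts condition (i) of Proposition \ref{qAbelian} into condition (i) of the statement. No $u\in N$ is hyperbolic: in the non-degenerate case $T$ is not a line, so the axis-stabiliser $\mathrm{Stab}(\mathrm{axis}(u))\supseteq C_G(u)$ has infinite index, contradicting $|\up{u}{G}|<\infty$ (in the degenerate case $K=A$ acts trivially, so there are no hyperbolics at all). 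Hence every element of $N$ is elliptic. For $u\in N$ the normal closure $\langle\up{u}{G}\rangle$ is finitely generated with every element elliptic, so by Serre's lemma it fixes a vertex $v_0$; being normal in $G$, it then fixes $gv_0$ for all $g$, i.e.\ acts trivially on $T$, so $u\in\widetilde C$. Thus $N\subseteq\widetilde C$.

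\emph{Condition (ii).} In the degenerate case $A=C=C'$, so $K=A=\widetilde C$ and $\theta_t=\varphi$; hence $\Theta:\Z\to Out(A)$ is $n\mapsto[\varphi^{\,n}]$, and Proposition \ref{qAbelian}.2) returns precisely conditions (i) and (ii). In the non-degenerate case I would show $\Theta$ is automatically injective, so (ii) is vacuous and only (i) survives. Suppose not: some $\theta_{t^n}$ with $n\neq0$ is inner, say $\theta_{t^n}(x)=k\,x\,k^{-1}$ for all $x\in K$, so that $w:=k^{-1}t^{n}$ centralises $K$ and, as $\proj(w)=n\neq0$, is hyperbolic with axis $\ell$ and ends $\xi_\pm$. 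Each $k\in K$ commutes with $w$, hence preserves $\ell$; commuting with the nontrivial translation $w|_\ell$ rules out reflecting $\ell$, so each such $k$ fixes both ends. Therefore $K$ fixes $\xi_+$, and since $K\trianglelefteq G$ one has $k(g\xi_+)=g(g^{-1}kg)\xi_+=g\xi_+$, so $K$ fixes the whole orbit $G\xi_+$ pointwise; likewise for $\xi_-$. Were $K$ to fix three distinct ends it would fix their median vertex $v$, giving $K\subseteq\mathrm{Stab}(v)$, a conjugate of $A$; normality then forces $K\subseteq\widetilde C\subseteq A$, and as $A\subseteq K$ this gives $A=C=C'$, a contradiction. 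Hence $|G\xi_+|,|G\xi_-|\leq2$, so the $G$-invariant set $G\xi_+$ determines a $G$-invariant bi-infinite geodesic, and minimality of the action forces $T$ to be that line, again degenerate --- contradiction. So $\Theta$ is injective.

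The main obstacle is this last injectivity argument. The naive route, deriving a contradiction merely from ``$w$ centralises $A$'', only produces $A=C$ or $A=C'$, which is perfectly compatible with non-degeneracy --- witness the Baumslag--Solitar group $BS(1,2)$, where $A=C\neq C'$ yet $\Theta$ is injective. One must genuinely use that $w$ centralises the \emph{entire} kernel and pass to the ends of its axis, so that the fixed ends are spread over $G$-orbits and force the tree to collapse to a line. The supporting facts --- minimality of the Bass--Serre action and Serre's lemma for finitely generated groups of elliptic isometries --- are standard, and the reduction through Proposition \ref{qAbelian}.2) together with $FC_G(K)=FC_G(\widetilde C)$ is routine.
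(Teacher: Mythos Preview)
Your argument is correct and follows a genuinely different route from the paper's. Both proofs begin by invoking Proposition~\ref{qAbelian}(2) for the split extension $1\to K\to G\to\Z\to1$, and then establish the two auxiliary facts: (a) $FC_G(K)\subseteq\widetilde C$, so that $FC_G(K)=\{1\}$ is equivalent to condition~(i); and (b) $\Theta:\Z\to Out(K)$ is automatically injective in the non-degenerate case. The paper proves both facts combinatorially via Britton's Lemma: for (b) it analyses the reduced form of a hypothetical $k\in K$ satisfying $t^nxt^{-n}=kxk^{-1}$ for all $x\in K$ and derives a contradiction on the signs of the $t$-exponents; for (a) it shows directly that every $k\in K\setminus(C\cap C')$ has infinite $\langle t\rangle$-orbit by a similar reduced-form inspection, whence $FC_G(K)\subseteq C\cap C'$ and normality forces $FC_G(K)\subseteq\widetilde C$. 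You replace all of this with the geometry of the Bass--Serre tree: Serre's fixed-point lemma for (a), and an ends-of-axis argument for (b). Your approach is closer in spirit to de~Cornulier's treatment \cite{ydc} and extends more readily to general groups acting on trees; the paper's approach is self-contained within combinatorial group theory and avoids importing Bass--Serre machinery. One minor remark: the assertion in your condition-(i) step that ``the axis-stabiliser has infinite index'' when $T$ is not a line is correct, but it rests on the same ends-plus-minimality reasoning you spell out carefully for the injectivity step (a finite-index line-stabiliser would yield a finite $G$-invariant set of~$\geq 2$ ends, whose convex hull is a proper invariant subtree); it is worth making that dependence explicit rather than leaving it as a bare claim.
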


\begin{proof}
If the HNN extension is degenerate: $C=C'=A$ then $\ker \proj=A$ and $G$ is an extension of $A$ by $\Z$. In such case, Proposition \ref{qAbelian}(2) applies: $G$ is icc if and only if Theorem \ref{extension}{\rm (i)} holds and $\Theta$ is injective. The conclusion holds.\smallskip\\ 
If the HNN extension is non-degenerate. We suppose in the following  that, without loss of generality, $C$ is  a proper subgroup of $A$.\smallskip\\
Applications of the Britton's Lemma (\cite{ls}) allow to prove the two following facts:  \smallskip\\
\noindent a) {\it The homomorphism $\Theta$ is injective.} Suppose on the contrary that there exists $n\in\N^*$ and $k\in K$ such that $\forall\,x\in K$, $t^nx\,t^{-n}=k\,x\,k^{-1}$. Let $\a\in A\setminus C$; $t^n\a\,t^{-n}$ is reduced so that $k\not\in A$, $k$ has reduced form:
$$k=k_0t^{\e_1}k_1\cdots t^{\e_p}k_p\quad\text{with}\ k_0,k_i\in A, \e_i=\pm 1,\ \forall\,i=1,\ldots,p
$$
for some $p\geq n$. On the one hand, since $t^nk\,t^{-n}=k$, necessarily $\e_1=-1$ or $\e_p=1$; on the other hand since $t^{-n}k\,t^{n}=k$, necessarily $\e_1=1$ or $\e_p=-1$; it follows that $\e_1=\e_p=\pm 1$. If $\e_1=\e_p=1$ then $k\,\a\,k^{-1}$ is also reduced and $t^n\,\a\, t^{-n}=k\,\a\,k^{-1}$ implies $t^n\in kA$ which cannot occur since $t^n\not\in K$. If $\e_1=\e_p=-1$: similarly $t^{-n}\a\,t^n=k^{-1}\a\,k$; if $\a\in C'$ the right term is reduced while the left term is not which contradicts that $p\geq n$; if $\a\not\in C'$ the same argument as before leads to a contradiction.
\medskip\\
b) {\it $\theta(\Z)$ has only infinite orbits in $K\setminus C\cap C'$.}  Suppose on the contrary that there exists $k\in K\setminus C\cap C'$ with a finite $\theta(\Z)$-orbit. Then $k\in K\setminus A$, for suppose without loss of generality that $k\in A\setminus C$, then $\{t^n\,k\,t^{-n}\,;\,n\in\N\}$ is infinite and contained in the $\theta(Q)$-orbit of $k$. Hence $k$ has reduced form:
$$k=k_0t^{\e_1}k_1\cdots t^{\e_p}k_p\quad\text{with}\ k_0,k_i\in A, \e_i=\pm 1,\ \forall\,i=1,\ldots,p
$$
for some $p\geq 1$.  
 Necessarily $\e_1=\e_p$ for otherwise the $\theta(\Z)$-orbit of $k$  would contain the infinite set $\{t^{\e_1\,n}\,k\,t^{\e_p\,n}\,;n\in\N\}$, so suppose without loss of generality that $\e_1=\e_p=-1$. An immediate induction shows that indeed $\forall\, i=1,\ldots,p$, $\e_i=-1$. This leads to a contradiction since $k\in K$ implies that $\sum_{i=1}^p\e_i=0$.\medskip\\
By applying  Proposition \ref{qAbelian}(2) with a) one obtains that $G$ is icc if and only if $FC_G(K)=\{1\}$. But with b) $FC_G(K)\subset C\cap C'$, and since $\widetilde{C}$ is the largest subgroup of $C\cap C'$ normal in $G$, $FC_G(K)\subset \widetilde{C}$. Hence condition $FC_G(K)=\{1\}$ becomes here $FC_G(\widetilde{C})=\{1\}$; this proves the result.
\end{proof}

\begin{example}
Consider the Baumslag-Solitar group $BS(m,n)=<a,t\,|\,ta^mt^{-1}=a^n>$. Then $G$ is icc if and only if $m\not=\pm n$.
\end{example}

\subsection{Amalgamated product} In this section let $A,B$ be groups, $C,C'$ proper subgroups respectively of $A$ and $B$, $\vf:C\longrightarrow C'$ an isomorphism and consider the amalgamated product (cf. \cite{ls}) $G=A*_{C}B=\, <A,B\,|\,\forall\,c\in C, c=\vf(c)>$. It is said to be {\sl degenerate} when $C,C'$ have index 2 respectively in $A$ and $B$, and {\sl non-degenerate} otherwise. In the following we identify $C$ with a subgroup both of $A$ and $B$.\\
\noindent
Denote by:
$$\widetilde{C}_A=\bigcap_{a\in A} a\,C\,a^{-1}, \quad \widetilde{C}_B=\bigcap_{b\in B} b\,C\,b^{-1},\quad \widetilde{C}=\widetilde{C}_A\cap \widetilde{C}_B$$
 $\widetilde{C}$ is the largest subgroups of $C$ normal in $G$.
 
 \begin{prop}[icc amalgams]
Let $G=A*_{C} B$ be a non-trivial amalgamated product.\smallskip\\
$\bullet$ In case the amalgam is non-degenerate: $G$ is icc if and only if:\\
\indent {\rm (i)}\ $FC_G(\widetilde{C})=\{1\}$.\\
$\bullet$ In case the amalgam is degenerate: here $\widetilde{C}=C$ and $G$ decomposes as an extension:
$$
1\longrightarrow C\longrightarrow G\longrightarrow \Z/2\Z * \Z/2\Z\longrightarrow 1
$$
  and $G$ is icc if and only if both {\rm (i)} holds  and:\smallskip\\
  \indent {\rm (ii)} the associated coupling $\Theta:\ZnZ{2}*\ZnZ{2}\longrightarrow Out(C)$ is injective.
\end{prop}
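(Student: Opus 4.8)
The plan is to treat the non-degenerate and degenerate cases by entirely different mechanisms, since only in the degenerate case is $C$ normal in $G$. Throughout I would use the action of $G=A*_C B$ on its Bass--Serre tree $T$ together with Britton's normal form for amalgams (cf. \cite{ls}): the edge stabilizers are the conjugates of $C$, the vertex stabilizers the conjugates of $A$ and $B$, and $\widetilde{C}=\bigcap_{g\in G}gCg^{-1}$ is exactly the kernel of the $G$-action on $T$. As $\widetilde{C}$ is normal in $G$, one has $FC_G(\widetilde{C})\subseteq FC(G)$, so condition (i) is clearly necessary in both cases and the whole content is the converse.

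For the non-degenerate case the goal is the single inclusion $FC(G)\subseteq\widetilde{C}$: granting it, every element of $FC(G)$ lies in $\widetilde{C}$ with finite $G$-conjugacy class, whence $FC(G)\subseteq FC_G(\widetilde{C})$ and thus $FC(G)=FC_G(\widetilde{C})$, so that $G$ (which is infinite) is icc iff $FC_G(\widetilde{C})=\{1\}$. To prove the inclusion I would take $w\in FC(G)\setminus\{1\}$ and argue in three steps. First, $w$ cannot be hyperbolic on $T$: a hyperbolic element has an axis $L$, its conjugates $gwg^{-1}$ have axes $gL$, and non-degeneracy forces $T$ not to be a line (some index $[A:C]$ or $[B:C]$ is $\geq 3$, producing a branch vertex), so $G\cdot L$ is infinite and $w$ would have infinitely many conjugates; hence $w$ is elliptic. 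Second, an element of a vertex group lying outside the edge group, say $w\in A\setminus C$, has infinite conjugacy class: conjugating by a cyclically reduced hyperbolic element $h=ba$ with $a\in A\setminus C$, $b\in B\setminus C$ yields elements $h^nwh^{-n}$ whose syllable lengths grow with $n$, giving infinitely many distinct conjugates. Thus $FC(G)\cap A\subseteq C$ and likewise $FC(G)\cap B\subseteq C$. Third, $FC(G)$ is a normal subgroup all of whose elements are now elliptic; by the standard dichotomy such a subgroup fixes a vertex of $T$ or an end, and the end case is excluded because $G$ contains hyperbolic elements with distinct endpoints and hence fixes no end. So $FC(G)$ fixes a vertex, hence (being normal) lies in a vertex group, hence in $C$ by the second step, hence in $\widetilde{C}$.

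For the degenerate case, $C$ has index $2$ in each of $A,B$, so $C$ is normal in both factors and therefore in $G$; thus $\widetilde{C}=C$ and $G$ is the extension $1\to C\to G\to \ZnZ{2}*\ZnZ{2}\to 1$ of the statement. Writing $Q=\ZnZ{2}*\ZnZ{2}=\langle s,t\rangle$ for the infinite dihedral group, I would first compute $FC(Q)=\langle st\rangle\cong\Z$ (the rotations have conjugacy classes of size $\leq 2$, the reflections infinite ones) and note $C_Q(FC(Q))=\langle st\rangle$ has index $2$ in $Q$, so Proposition \ref{extfcqfg} applies. The point is to match condition (ii) of the statement, injectivity of $\Theta$ on all of $Q$, with the conditions of Theorem \ref{extension}, which only see $\Phi=\Theta|_{FC(Q)}$. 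For this I would use that $\Theta(s)^2=\Theta(t)^2=1$ in $Out(C)$ (images of involutions) together with the fact that every non-trivial normal subgroup of $Q$ meets $\langle st\rangle$ non-trivially; this gives $\ker\Phi=\ker\Theta\cap\langle st\rangle\neq\{1\}$ whenever $\Theta$ is non-injective, so $\Theta$ is injective iff $\Phi$ is. Then if (i) holds and $\Theta$ is injective, $\ker\Phi=\{1\}$ and condition (ii) of Theorem \ref{extension} holds vacuously, so $G$ is icc; conversely, if $\Theta$ is non-injective, pick a rotation $q=(st)^m\neq 1$ in $\ker\Theta$ and $k\in C$ with $\theta_q(x)=kxk^{-1}$, and check as in Lemma \ref{lastlemma} (or Proposition \ref{qAbelian}(2)) that $w=k^{-1}\overline{q}$ has centralizer containing $\pi^{-1}(\langle q\rangle)$, of finite index $2m$ in $G$, so $G$ is not icc.

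The main obstacle is the non-degenerate case, specifically the passage from ``every element of $FC(G)$ is elliptic and each vertex-group part lies in $C$'' to ``$FC(G)\subseteq\widetilde{C}$'': an elliptic element lies in \emph{some} conjugate of $C$, and one must upgrade this to membership in the intersection $\widetilde{C}$ of \emph{all} conjugates. This is exactly where the normality of $FC(G)$ and the tree-theoretic dichotomy (fixed vertex versus fixed end, minimality of the $G$-action) are indispensable, and where the Britton-normal-form bookkeeping ruling out hyperbolic and vertex-but-not-edge elements must be done carefully. The degenerate case, by contrast, is routine once $FC(\ZnZ{2}*\ZnZ{2})$ is computed and the involution relation $\Theta(s)^2=\Theta(t)^2=1$ is exploited.
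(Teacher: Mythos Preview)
Your argument is correct, but in the non-degenerate case it follows a genuinely different route from the paper. You work directly on the Bass--Serre tree, using the hyperbolic/elliptic dichotomy and Tits' fixed-point/fixed-end alternative to show $FC(G)\subseteq\widetilde{C}$ in one pass. The paper instead proves two normal-form facts --- (a) every element of $A\setminus\widetilde{C}$ has infinite $G$-class, and (b) if $[A:C]>2$ every element outside $A\cup B$ has infinite class --- which already settle the case $\widetilde{C}=\{1\}$; for $\widetilde{C}\neq\{1\}$ it then passes to the quotient extension
\[
1\longrightarrow \widetilde{C}\longrightarrow G\longrightarrow \overline{A}*_{\overline{C}}\overline{B}\longrightarrow 1,
\]
observes that the quotient amalgam has trivial normal core in $\overline{C}$ and is therefore icc by (a) and (b), and concludes via Theorem~\ref{extension} (the quotient being icc makes $FC(Q)=\{1\}$, so condition (ii) there is vacuous). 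Your approach is more geometric and self-contained with respect to the amalgam, but imports the Tits alternative and the observation that $G$ fixes no end; the paper's approach is more elementary at the tree level (only Britton normal forms) and, more to the point, illustrates the extension machinery that is the subject of the paper. In the degenerate case your treatment and the paper's are essentially the same: both identify $FC(\Z/2\Z*\Z/2\Z)$ with the infinite cyclic rotation subgroup, reduce injectivity of $\Theta$ to injectivity of $\Phi$, and use that any non-trivial rotation generates a finite-index subgroup to produce a finite conjugacy class when $\Theta$ is not injective.

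One small point worth tightening in your write-up of step~1: the implication ``$T$ is not a line, hence $G\cdot L$ is infinite'' is not automatic from non-degeneracy alone; what you actually need (and have) is that $G$ fixes no end and no line, i.e.\ is of general type on $T$, so that there exists a hyperbolic $h$ whose axis does not share an end with $L$, whence the $h^nL$ are pairwise distinct. You prove exactly this ``no fixed end'' fact later for step~3, so it would be cleaner to establish it once up front and invoke it in both places.
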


\begin{proof}
Applications of the normal form theorem  (cf. \cite{ls}) allow to prove the following facts:  \\
\noindent
a) {\it Every element of $A\setminus \widetilde{C}$ has an infinite $G$-conjugacy class.} Let $a\in A\setminus C$ and $b\in B\setminus C$. Then the elements $(ab)^n\,a\,(ab)^{-n}$, $n\in \N$ are pairwise disjoint elements of $\up{a}{G}$, so that $\up{a}{G}$ is infinite. Now let $x\in A\setminus \widetilde{C}$, a conjugate of $x$ lies in $(A\cup B)\setminus C$ so that  $\up{x}{G}$ is infinite.\medskip\\
b) {\it If $[A:C]>2$ then every element of $G\setminus (A\cup B)$ has an infinite conjugacy class.} Let $u\in G\setminus (A\cup B)$; up to conjugacy by an element of $A\cup B$ we suppose that $u$ has reduced form $u=a_1b_1\cdots a_nb_n$ for some $n\in\N^*$ with $a_1\in A$, $b_1\in B\setminus C$ and $\forall\,i=2,\ldots ,n$: $a_i\in A\setminus C$ and $b_i\in B\setminus C$. Let $b\in B\setminus C$ and $a\in A\setminus C$ such that $a$ and $a_1^{-1}$ lie in different cosets of $A/C$. Then the elements $(ba)^n\,u\,(ba)^{-n}$ for $n\in\N$ are pairwise distinct elements in $\up{u}{G}$, so that $\up{u}{G}$ is infinite.\smallskip\\
We now distinguish two cases:\smallskip\\
{\it First case: the degenerate case, $[A:C]=[B:C]=2$.} In such a case, $C=\widetilde{C}$ is normal in $A,B$ and $G$. In case $C=\{1\}$, $G=\ZnZ{2}*\ZnZ{2}$ is not icc and $\Theta:\ZnZ{2}*\ZnZ{2}\longrightarrow Out(C)$ is non-injective; otherwise $G$ splits as:
$$
1\longrightarrow C\longrightarrow A*_CB\longrightarrow \ZnZ{2}*\ZnZ{2}\longrightarrow 1~,
$$
$FC(\ZnZ{2}*\ZnZ{2})$ consists in the characteristic infinite cyclic subgroup. Note that $\Theta:\ZnZ{2}*\ZnZ{2}\longrightarrow Out(C)$ is injective if and only if it is once restricted to $FC(\ZnZ{2}*\ZnZ{2})$. Note also that whenever $\Theta$ is non-injective,  $G$ is not icc, since any  element $\not=1$ of $FC(\ZnZ{2}*\ZnZ{2})$ generates a finite index subgroup of $\ZnZ{2}*\ZnZ{2}$. Together with  a) and Theorem \ref{extension}, one obtains that $G$ is icc if and only conditions {(i)} and (ii) hold.\smallskip\\
{\it Second case: the non-degenerate case.} If $\widetilde{C}=\{1\}$ then on the one hand $FC_G(\widetilde{C})=\{1\}$ and on the other hand the facts a) and b) above prove that $G$ is icc.\\ If $\widetilde{C}\not=\{1\}$, denote $\ol{A}=A/\widetilde{C}$, $\ol{B}=B/\widetilde{C}$ and $\ol{C}=C/\widetilde{C}$; $G$ decomposes as an extension:
$$
1\longrightarrow \widetilde{C}\longrightarrow A*_CB\longrightarrow \ol{A}*_{\ol{C}}\ol{B}\longrightarrow 1
$$
and with the Correspondence Theorem (Theorem 2.28, \cite{rotman}), on the one hand, $[\ol{A}:\ol{C}]=[A:C]$ and $[\ol{B}:\ol{C}]=[B:C]$ and on the other $\ol{C}$ does not contain any non-trivial subgroup normal in $\ol{A}*_{\ol{C}}\ol{B}$. With the facts a) and b) proved above,  $\ol{A}*_{\ol{C}}\ol{B}$ is icc. Therefore by applying  Theorem \ref{extension} together with a), $G$ is icc if and only if $FC(\widetilde{C})=\{1\}$. 
\end{proof}

\begin{example}
 A free product of non-trivial groups is either icc or an infinite dihedral group $\ZnZ{2}*\ZnZ{2}$. \\
\end{example}

\begin{center}{\textbf{Acknowledgments}}
\end{center}
The author wishes to thank Pierre de la Harpe for having introduced  him to the problem, for corrections on a preliminary draft version, 
for frequent conversations on the subject and for all his useful remarks and comments.\\ \\


\begin{thebibliography}{mot}

\bibitem[1]{brown} K.{Brown}, \emph{Cohomology of groups}, Graduate Texts in Maths, {\bf 87}, Springer-Verlag, 1982. 

\bibitem[2]{delzant} M.{Coornaert},   T.{Delzant} and  A.{Papadopoulos},   \emph{G\'eom\'etrie et th\'eorie des groupes: les groupes hyperboliques de Gromov}, Lecture Notes in Mathematics, {\bf 1441} , Springer-Verlag, 1991.

\bibitem[3]{ydc}
Y.{de Cornulier}, {Infinite conjugacy classes in
groups acting on trees}, \emph{Groups Geom. Dyn.} {\bf 3} (2009), no. 2, 267--277.

\bibitem[4]{dixmier}
J.{Dixmier}, \emph{von Neumann algebras}, Translated from French by F. Jellett,  Mathematical Library,  {\bf 27},  North-Holland (1981).

\bibitem[5]{pdlh} P.{de la Harpe}, {On simplicity of reduced C*-algebras of groups}, \emph{Bull. Lond. Math. Soc.} {\bf 39} (2007), 1--26.

\bibitem[6]{aogf3v}
P.{de la Harpe} and J.-P.{Pr\'eaux}, {Groupes
fondamentaux des vari\'et\'es de dimension 3 et alg\`ebres
d'op\'erateurs}, \emph{Ann. Fac. Sci. Toulouse Math.}, ser. 6, {\bf 16}, (2007), no. 3,  561--589.

\bibitem[7]{ls} R.{Lyndon} and P.{Schupp}, \emph{Combinatorial Group Theory}, Springer-Verlag, 1977.

\bibitem[8]{mcduff} D.McDuff, Uncountably many $II-1$ factors, {\sl Annals of Mathematics} {\bf 90} (2), (1969), 372--377.

\bibitem[9]{roiv}
F.J.{Murray} and J.{von Neumann}, {On rings of
operators}, IV, \emph{Annals of Mathematics} {\bf 44} (1943), 716--808.

\bibitem[10]{rotman} J.{Rotman}, \emph{An Introduction to the Theory of Groups}, fourth edition, Graduate Texts in Maths, {\bf 148}, Springer-Verlag, 1995.


\end{thebibliography}
\end{document}